\newtheorem{thm}{Theorem}[section]
\newtheorem{lem}[thm]{Lemma}
\newtheorem{prop}[thm]{Proposition}
\newtheorem{defn}[thm]{Definition}
\newtheorem{rem}[thm]{Remark}
\numberwithin{equation}{section}
\begin{document}

	\title[Strichartz for half Klein-Gordon and applications to Dirac]{Strichartz estimates for the half Klein-Gordon equation on asymptotically flat backgrounds and applications to cubic Dirac equations}

	\author[S. Herr]{Sebastian Herr}
	\address{Universit\"at Bielefeld, Fakult\"at f\"ur Mathematik, Postfach 100131, 33501 Bielefeld, Germany}
	\email{herr@math.uni-bielefeld.de}
	
    \author[S. Hong]{Seokchang Hong}
    \address{Universit\"at Bielefeld, Fakult\"at f\"ur Mathematik, Postfach 100131, 33501 Bielefeld, Germany}
    \email{shong@math.uni-bielefeld.de}

	\thanks{2010 {\it Mathematics Subject Classification.} M35Q55, 35Q40.}
	\thanks{{\it Key words and phrases.} Klein-Gordon equations, phase space transform, Dirac equations, weakly asymptotically flat space-time, endpoint Strichartz estimates, global well-posedness}
	
	\begin{abstract}
		The aim of this paper is to establish the $L^2_t$-endpoint Strichartz estimate for (half) Klein-Gordon equations on a weakly asymptotically flat space-time. As an application we prove small data global well-posedness and scattering for massive cubic Dirac equations in the full subcritical range in this setting.
    
        Crucial ingredient is a parametrix contruction following the work of Metcalfe-Tataru and Xue and complements Strichartz estimates obtained by Zheng-Zhang. The proof of the global result for the cubic Dirac equation follows the strategy developed by Machihara-Nakanishi-Ozawa in the Euclidean setting.
	\end{abstract}

		\maketitle

%\tableofcontents

\section{Introduction}\label{sec:intro}
\subsection{Dispersive and Strichartz estimates for wave and Klein-Gordon equations}
Consider the classical constant coefficient wave equation on $\mathbb R^{1+d}$, i.e.
\begin{align}
    \Box u = (\partial_t^2-\Delta_x)u= 0, \ u|_{t=0}=u_0, \ \partial_t u|_{t=0}=u_1,
\end{align}
where $\Delta_x=\sum_{j=1}^d\frac{\partial^2}{\partial x_j^2}$ is the usual Laplacian operator on $\mathbb R^d$, $d\geq 2$.
For initial data Fourier-supported in the annulus $\{ \frac\lambda2\le |\xi|\le2\lambda \}$, the solution $u$ satisfies the dispersive estimate
\begin{align}\label{dispersive-wave}
    \| \nabla_x u(t)\|_{L^\infty_x} \lesssim t^{-\frac{d-1}{2}} \lambda^{\frac{d+1}{2}} ( \|\nabla_{x}u_0\|_{L^1_x}+\|u_1\|_{L^1_x}),
\end{align}
which follows from the classical stationary phase analysis.
From the dispersive decay of waves, the $L^2$ conservation and Littlewood--Paley theory one obtains the family of Strichartz estimates \cite{strichartz,keeltao} 
\begin{align}\label{strichartz-wave}
    \||\nabla_x|^{-\sigma}\nabla_{x}u\|_{L^p_tL^q_x(\mathbb R^{1+d})} & \lesssim \|\nabla_{x}u_0\|_{L^2_x}+\|u_1\|_{L^2_x},
\end{align}
for wave-admissible $(\sigma,p,q)$ satisfying $2\le p,q\le\infty$ and
\begin{align}
    \sigma = \frac{d}{2}-\frac{d}{q}-\frac1p, \quad \frac2p +\frac{d-1}{q} \le \frac{d-1}{2},
\end{align}
except for the forbidden endpoint $(1,2,\infty)$ when $d=3$, in which the estimate fails to hold \cite{mg}.

This has been generalized to variable coefficient wave equations under a number of different assumptions. For instance, the dispersive  and Strichartz estimates \eqref{strichartz-wave} have been established in \cite{metataru}  under a weak asymptotic flatness condition.

Similarly, consider the  constant coefficient  Klein-Gordon equation on $\mathbb R^{1+d}$
\begin{align}
   ( \Box+m^2) u =0, \ u|_{t=0}= u_0, \ \partial_t u |_{t=0}= u_1,
\end{align}
for a mass parameter $m>0$. Under the same Fourier support assumption as above, the solutions $u$ to the Klein-Gordon equations satisfy the following dispersive decay estimate (see e.g.\ \cite{nakaschlag}):
\begin{align}
    \|\nabla_{x} u\|_{L^\infty_x} & \lesssim \langle \lambda\rangle^{\frac{d}{2}+1}t^{-\frac{d}{2}}
    ( \|\nabla_{x} u_0\|_{L^1_x}+\|u_1\|_{L^1_x}).
\end{align}
Notice that the time-decay rate is stronger than for the wave equation, at the expense of derivative loss. In addition, \eqref{dispersive-wave} is true for large frequencies, so that by interpolation one obtains the Strichartz estimates of the form
\begin{align}
\|\langle\nabla_x\rangle^{-\sigma}\nabla_{x}u\|_{L^p_tL^q_x(\mathbb R^{1+d})} & \lesssim \|\nabla_{x}u_0\|_{L^2_x}+\|u_1\|_{L^2_x},
\end{align}
where $(\sigma,p,q)$ satisfy the relation: for $2\le p,q\le\infty$ and $0\le\theta\le1$,
\begin{align}
  \sigma= \frac{d+1+\theta}{4}\left( 1-\frac2q  \right), \quad  \frac2p+\frac{d-1+\theta}{q} = \frac{d-1+\theta}{2},
\end{align}
except for the forbidden endpoint $(p,q)=(2,\infty)$ when $d=3$ and $\theta=0$, see e.g.\ \cite[Lemma 3]{machihara1}.

In this paper, we establish the corresponding Strichartz estimates for the (half) Klein-Gordon equation  %on a weakly asymptotically flat space-time,
with variable coefficients under a weak asymptotic flatness condition.
This complements the result of \cite{metataru} for the wave equation in this setting.

Given a Lorentzian metric $g=-dt^2+g_{ij}(t,x)dx^idx^j$, we consider the Klein-Gordon equation $(\Box_g-m^2)u=0$. In view of the applications we have in mind it is favorable to consider half-Klein-Gordon equations
\begin{align}
    i\partial_tu = \pm \mathrm{Op}(\sqrt{m^2+g^{ij}(t,x)\xi_i\xi_j})u + (\textrm{lower order terms}).
\end{align}
We denote these lower-order terms by $A'(t,x,D_x)$ and assume a very general bound, which is flexible enough to cover many cases, in particular the application to Dirac equations.\\
\textbf{Assumption on lower order terms:} The lower-order terms $A'$ satisfy the following error-type estimates:
\begin{align}\label{est-lower-error}
    \|A'u\|_{Y^s} \lesssim \epsilon \|u\|_{X^s},
\end{align}
where $s$ refers to the $L^2$ Sobolev regularity of the initial data and the emanating solution.
We postpone precise definitions and further discussion to Section \ref{sec:dirac}.

From now on we put $m=1$ for simplicity and focus on the evolution equation
\begin{align}\label{eq-half-kg}
\begin{aligned}
     D_tu \pm  \mathrm{Op}(\sqrt{1+g^{ij}(t,x)\xi_i\xi_j})u+A'(t,x,D_x)u =f,\\
     u|_{t=0}:= u_0,
     \end{aligned}
\end{align}
where $A'$ is the lower-order term satisfying the error-type estimates \eqref{est-lower-error}.
We shall consider the half-Klein-Gordon operator, which is given by a small perturbation of the usual half-Klein-Gordon operator $D_t\pm \langle D_x\rangle$. %By scaling argument $(t,x)\to(2^k t,2^k x)$ we restrict ourselves to the unit frequency localisation. Then we consider the metric $g$ so  that the following inequality satisfies:
%\begin{align*}
 %   |\sqrt{2^{-2k}+g^{ij}(t,x)\xi_i\xi_j}-\sqrt{2^{-2k}+|\xi|^2}| <\epsilon.
%\end{align*}
%We also note that
%\begin{align*}
 %   \sqrt{1+g^{ij}\xi_i\xi_j}-\sqrt{1+|\xi|^2} & = \frac{g^{ij}\xi_i\xi_j-|\xi|^2}{ \sqrt{1+g^{ij}\xi_i\xi_j}+\sqrt{1+|\xi|^2}} \\
  %  & = \frac{(g^{ij}-\delta^{ij})\xi_i\xi_j}{ \sqrt{1+g^{ij}\xi_i\xi_j}+\sqrt{1+|\xi|^2}}.
%\end{align*}
%From now on, instead of the operator $Op(\sqrt{1+g^{ij}\xi_i\xi_j})$ we consider the operator of the form:
%\begin{align*}
%	 Op(\sqrt{1+|\xi|^2})+Op((g-\eta)a(\xi))+(\textrm{lower order terms}),
%\end{align*}
%where $\eta$ is the Minkowski metric and $a(\xi)$ is the first order symbol. This coincides with the original operator $\sqrt{1+g^{ij}\xi_i\xi_j}$ provided that $g$ is given by a small perturbation of the flat metric.
To be precise, we consider the metric $g=-dt^2+g_{ij}(t,x)dx^idx^j$, where the elliptic part $[g_{ij}]_{i,j=1,\cdots,3}$ satisfies the following:
\[
C_{\alpha,k}:=\sup_{(t,x)\in A_j}\left( |x|^{|\alpha|}|\partial_{t,x}^\alpha ( g^{ij}(t,x)-\delta^{ij})|\right), \quad 0\le |\alpha|\le 2,
\]
where $\alpha=(\alpha_0,\alpha')\in \mathbb{N}_0^{1+3}$, and
\[
C_{\alpha,k}':= \sup_{(t,x)\in A_j}\left( |x|^{\frac{|\alpha|+1}{2}}|\partial_{t,x}^\alpha g^{ij}(t,x)| \right), \quad 3\le|\alpha|\le[\frac d2]+3, \ 0\le \alpha_0\le2,
\]
satisfy
\begin{align}\label{dirac-asymp-flat1}
    \sum_{k\in\mathbb Z} C_{\alpha,k} \le \epsilon, \quad |\alpha|\le2,
\end{align}
and
\begin{align}\label{dirac-asymp-flat2}
   \sum_{k\in\mathbb Z}
   C_{\alpha,k}' \le \epsilon, \quad 3\le|\alpha|\le [\frac d2]+3, \ 0\le\alpha_0\le2.
\end{align}
In addition, we assume that the metric and its derivatives are regular near $0$, i.e., if $B=\{|x|<1\}$ is the unit ball, then we have
\begin{align}\label{dirac-asymp-regular}
    \sup_{(t,x)\in\mathbb R\times B}|x|^{|\alpha|}|\partial_{t,x}^\alpha ( g^{ij}(t,x)-\delta^{ij})| \le C_\alpha, \quad 0\le|\alpha|\le[\frac d2]+3, \ 0\le \alpha_0\le2.
\end{align}

Now we state our first main theorem.
\begin{thm}[Strichartz estimates for the half Klein-Gordon equations]\label{thm-strichartz}
Let $d\ge3$, $|s|<\frac{d+1}{2}$, and \eqref{est-lower-error}, \eqref{dirac-asymp-flat1}, \eqref{dirac-asymp-flat2}, \eqref{dirac-asymp-regular}. Then, the solution $u$ to \eqref{eq-half-kg}
satisfies the Strichartz estimates
	\begin{align}
	\|\langle D_x\rangle^{s-\sigma_1}u\|_{L^{p_1}_t L^{q_1}_x(\mathbb R^{1+d})} & \lesssim \|u_0\|_{H^s_x(\mathbb R^d)}+ \|\langle D_x\rangle^{s+\sigma_2} f\|_{L^{p_2'}_tL^{q_2'}_x(\mathbb R^{1+d})}, 	
	\end{align}
where the tuples $(\sigma_1,p_1,q_1)$ and $(\sigma_2,p_2,q_2)$ satisfy the relations: 
\begin{align}
&\frac{2}{p}+\frac{d-1+\theta}{q} = \frac{d-1+\theta}{2}, \ \sigma=\sigma(q)=\left( \frac{d+1+\theta}{4} \right)\left( 1-\frac2q \right),	\ 0\le\theta\le1,\\
& (p,q)\ne(2,\infty) \text{ if } d=3, \, \theta=0.
\end{align}
\end{thm}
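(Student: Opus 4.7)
The plan is to dyadically localize in frequency, rescale each piece to unit scale, build a parametrix for the rescaled operator, and feed the resulting energy and dispersive bounds into the Keel--Tao machinery. A standard $TT^*$ argument together with the Christ--Kiselev lemma reduces the inhomogeneous estimate to the homogeneous one
\begin{align*}
  \|\langle D_x\rangle^{s-\sigma(q)} u\|_{L^p_t L^q_x} \lesssim \|u_0\|_{H^s_x},
\end{align*}
with the $L^2_t$-endpoint covered directly by the bilinear form of Keel--Tao. Inserting the Littlewood--Paley projections $P_\lambda$, $\lambda = 2^k$, $k \ge 0$, and applying the rescaling $(t,x) \mapsto (\lambda^{-1}t, \lambda^{-1}x)$ from Subsection \ref{subsec:scaling}, the task becomes to prove a uniform-in-$\lambda$ bound for $D_t + A^\pm_\lambda$ acting on functions localized in the unit annulus $U$ (and on the unit disc for the low-frequency piece $A^\pm_{\le 1}$). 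Under \eqref{as1-symbol-reg}--\eqref{as3-symbol-decay} the rescaled symbols are uniformly $\epsilon$-small perturbations of the smooth model $\pm\sqrt{\lambda^{-2}+|\xi|^2}$ on $A_j \times U$, with decay summable across the annuli $A_j$.

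The central step is the parametrix construction in Section \ref{sec:para}, adapted from Metcalfe--Tataru and Xue. I would implement it via a phase-space (FBI / wave-packet) transform that conjugates $D_t + A^\pm_\lambda$, modulo controllable errors, to a transport operator along the Hamilton flow of the smooth symbol $\pm\sqrt{\lambda^{-2}+|\xi|^2}$. The smallness condition \eqref{as2-symbol-small} and the further decay \eqref{as3-symbol-decay} ensure the flow is outgoing and non-trapping, so the transport equation for the packet amplitudes is solvable with errors summable over the annuli $A_j$; finite speed of propagation handles the regions $|x| \gg t$ and $|x| \ll t$. From the resulting parametrix one reads off an $L^2_x$-energy identity and a dispersive bound
\begin{align*}
  \|U_\lambda(t,0) P_U\|_{L^1_x \to L^\infty_x} \lesssim |t|^{-(d-1)/2} \bigl(1+\lambda^{-1}|t|\bigr)^{-1/2},
\end{align*}
which at unit frequency combines the wave-type decay $(d-1)/2$ with a $1/2$-power gain encoded by the effective mass $\lambda^{-1}$; the piece associated with $A^\pm_{\le 1}$ gives the full $t^{-d/2}$ Klein--Gordon decay without a derivative loss.

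Inserting these two bounds into Keel--Tao yields the full admissible Strichartz range on each rescaled piece, including the $L^2_t$-endpoint in $d \ge 4$; interpolation between the wave ($\theta = 0$) and Klein--Gordon ($\theta = 1$) endpoints reproduces the admissibility relation $\sigma(q) = \tfrac{d+1+\theta}{4}(1 - \tfrac{2}{q})$. Undoing the rescaling contributes the weight $\langle\lambda\rangle^{-\sigma(q)}\lambda^s$, and the Littlewood--Paley square function inequality allows summation over $k \ge 0$; the range $|s| < (d+1)/2$ is limited precisely by the number of symbol derivatives in \eqref{as1-symbol-reg}--\eqref{as3-symbol-decay} that the parametrix consumes. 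The principal obstacle I expect is the parametrix construction itself at the $L^2_t$-endpoint: one must quantify the smallness $\epsilon$ sharply enough to absorb the wave-packet error terms, as in the half-wave analysis of \cite{metataru}, while simultaneously tracking the $\lambda^{-1}$-dependence uniformly so that the Klein--Gordon time decay survives at every dyadic scale and matches the interpolated exponent $\theta$ in the final statement.
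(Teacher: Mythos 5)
Your outline captures the right skeleton for the parametrix part --- Littlewood--Paley decomposition, rescaling $(t,x)\mapsto(\lambda^{-1}t,\lambda^{-1}x)$ to unit frequency, FBI/phase-space parametrix following Metcalfe--Tataru, dispersive bound with a $\theta$-interpolated decay rate, and Keel--Tao bilinear interpolation for the $L^2_t$-endpoint --- and this is indeed how the paper handles the dispersive input. But there is a genuine gap in the step that connects the parametrix to the actual solution, and the proposal as written would not close.

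The issue is that $\mathcal K^\pm$ is a \emph{parametrix}, not the exact propagator for $D_t+A^\pm$: it produces a controllable error $[(D_t+A^\pm)\mathcal K^\pm-1]f$. You cannot simply feed energy and dispersive bounds into Keel--Tao, and the Christ--Kiselev lemma does not help either, because both assume you already have an exact retarded solution operator with the stated mapping properties. The paper's mechanism for bridging this gap is the localised energy (local smoothing) framework: the spaces $X^s,Y^s$ (defined via dyadic local-in-space weighted $L^2$ norms), the localised energy estimate $\|u\|_{L^\infty_tH^s_x\cap X^s}\lesssim\|u_0\|_{H^s_x}+\|(D_t+A^\pm)u\|_{L^1_tH^s_x+Y^s}$ proved by a positive-commutator argument with a carefully chosen multiplier $Q$, and the parametrix error bound $\|[(D_t+A^\pm)\mathcal K^\pm-1]f\|_{Y^s}\lesssim\|f\|_{\langle D_x\rangle^{-\sigma-s}L^{p'}_tL^{q'}_x}$. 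The transfer from the parametrix to an arbitrary solution $u$ is then done by a duality pairing $\int\langle u,g\rangle\,dt=\int\langle(D_t+A^\pm)u,\mathcal K^\pm g\rangle\,dt-\int\langle u,[(D_t+A^\pm)\mathcal K^\pm-1]g\rangle\,dt+\langle u,\mathcal K^\pm g\rangle|_{T_-}^{T_+}$ with the first term estimated by $Y^s\times X^{-s}$ duality and the second by $X^s\times Y^{-s}$ duality, and finally the decomposition $u=\mathcal K^\pm f+\phi$ with $(D_t+A^\pm)\phi=[1-(D_t+A^\pm)\mathcal K^\pm]f$, using the local energy estimate to control $\phi$. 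None of this appears in your proposal. Relatedly, you attribute the restriction $|s|<\tfrac{d+1}{2}$ to symbol derivative counting in the parametrix, but the paper states explicitly that this restriction comes from the definition of the spaces $X^s,Y^s$, i.e.\ from the local energy framework that the proposal omits.

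As a smaller point, the unit-scale dispersive heuristic $|t|^{-(d-1)/2}(1+\lambda^{-1}|t|)^{-1/2}$ is off by a power of $\lambda$: the radial eigenvalue of the Hessian of $\sqrt{\lambda^{-2}+|\xi|^2}$ on $|\xi|\approx1$ is of size $\lambda^{-2}$, so the correct refinement is $(1+\lambda^{-2}|t|)^{-1/2}$, which after interpolation gives the paper's bound $\lambda^\theta(1+|t-s|)^{-(d-1+\theta)/2}$ for $0\le\theta\le1$ in Proposition \ref{prop-parametrix}. This is also what reproduces the sharp flat Klein--Gordon rate $\lambda^{d/2+1}t^{-d/2}$ after undoing the rescaling.
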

Note that Theorem \ref{thm-strichartz} includes, if $d=3$ and $\theta>0$, the endpoint case $p=2$ and $q=2+\frac4{\theta}$. 
The upper bound on $|s|$ is technical and related to the function spaces for the localised energy estimates. 

The strategy of the proof of Theorem \ref{thm-strichartz} mainly follows \cite{metataru}. %We use Littlewood-Paley theory and decompose the operator $D_t+A^\pm$ into dyadic pieces and apply a scaling argument to reduce the operator to the unit scale scale. 
We construct an outgoing parametrix for the frequency-localised operators via the FBI transform introduced in \cite{metataru,tataru4}. Thanks to the assumptions \eqref{dirac-asymp-flat1}, \eqref{dirac-asymp-flat2}, and \eqref{dirac-asymp-regular}, the principal results from \cite{metataru,tataru4} can be used here. However, there the Klein-Gordon operator does not obey the scaling symmetry. This is the reason why we have to track the dependence on the parameter $\lambda$, which is different from \cite{metataru,tataru4}.
As \cite{metataru,tataru4}, in addition to the parametrix we also provide error estimates via localised energy estimates.

%In summary, if $u$ is the solution to the equation \eqref{eq-half-kg}, we may write $u = \mathcal K^\pm f+\phi$, where $\mathcal K^\pm$ is the outgoing 

%Our parametrix still has an error, and this can be controlled via localised energy estimates.
%In the proof of Theorem \ref{thm-strichartz}, an argument of rescaling plays a crucial role. Although the Klein-Gordon operator does not obey a scale-invariance, we can take advantage of rescaling. %in the following way. 
%{\color{blue} Using the Littlewood-Paley theory, we restrict ourselves into an annular domain $|\xi|\approx\lambda$, for each dyadic number $\lambda\ge1$. Since waves of the half Klein-Gordon operator propagate at a speed of size $O(1)$, the outgoing waves should be localised into cones of the form $\{|x|\approx t\}$. In what follows, we call the domain $\{|x|\approx t\}\cap\{|\xi|\approx\lambda\}$ to be the main propagation region. However, due to the uncertainty principle, the exact flow cannot be sharply localised into the main propagation region and we introduce an artificial damping term, which satisfies the rapid decay outside the main propagation region and vanishes on the regime simultaneously.}
\subsubsection*{Previous results}
The Strichartz estimates for the wave equations with the constant coefficient have been extensively studied. We refer to \cite{ginibrevelo1} for an expository study. %The endpoint estimate is recently obtained by Keel- Tao \cite{keeltao}.
The forbidden endpoint $(p,q)=(2,\infty)$ with $d=3$ can be compensated at the expense of an extra regularity in the angular variables and one can enjoy the stronger space-time estimates by imposing the angular regularity. We refer to \cite{sterbenz} for the improved Strichartz estimates of the wave equations using angular regularity and \cite{cholee} for more general operators.

Local-in-time Strichartz estimates for the wave operator with variable coefficients have been well-studied. Concerning a parametrix construction, one may consider a suitable localisation of the spatial variables and the frequency variables at the same time. One way of accomplishing this task is to utilise a wave packet. Smith \cite{smith} construct a wave packet parametrix and obtains the Strichartz estimates with $C^{1,1}$-coefficients in $d=2,3$. An alternative way is to use the FBI transform, which is a phase space transform. Tataru \cite{tataru,tataru1,tataru2} proves the Strichartz estimates with rough coefficients. We also refer to the work by Smith - Tataru \cite{smithtataru} for the sharp lower bounds of the solutions to the wave equation with low regularity metric. 

Concerning global-in-time Strichartz estimates, several geometric features of manifolds are also considered. For example, Smith-Sogge \cite{smithsogge} establishes the Strichartz estimates on a spatially compact perturbation of the Euclidean space with odd dimensions. This is later improved to even dimensions by Burq \cite{burq} and Metcalfe \cite{metcalfe} independently. The Strichartz estimates for the Schr\"odinger equations and the wave equations are obtained on an asymptotically Euclidean space in a very weak sense by Tataru \cite{tataru4} and Metcalfe-Tataru \cite{metataru}, using a time-dependent FBI transform. On the non-trapping scattering manifold setup, Hassell-Zhang \cite{hazhang} show the endpoint Schr\"odinger-Strichartz estimates. Then the Strichartz estimates for the wave equations and the Klein-Gordon equations are obtained by Zhang \cite{zhang}. %and Zhang-Zheng \cite{zhangzheng}. %We also mention the dissertation by Xue \cite{xue}, where a partial result on the global-in-time Strichartz estimates for the Klein-Gordon equations had been announced by using the phase transform method.
The Strichartz estimates for the wave equations on manifold with boundary have been obtained by Blair - Smith - Sogge \cite{blairsmithsogge} and it has been improved to more general setting by Ivanovici - Lebeau - Planchon \cite{ivanovici1} and Ivanovici - Lascar - Lebeau - Planchon \cite{ivanovici2}.

On the study of solutions to the linear Klein-Gordon equations, see \cite{katooz,klai,marshall} and the references therein. Concerning global existence of solutions to nonlinear Klein-Gordon equations, we refer to \cite{delortfang,delort,ginibrevelo,klai1,shatah}. The endpoint Strichartz estimates for the Klein-Gordon equation on the non-trapping scattering manifold  have been obtained by Zhang-Zheng \cite{zhangzheng}. We cannot use this result (even under strong enough assumptions so that we are in the geometric setting of \cite{zhangzheng}) to derive global in time estimates for the cubic Dirac equation due to the presence of lower order perturbations.

We point out that for the frequency localisation to unit scale $|\xi|\approx1$  and $\lambda=1$ the dispersive decay estimate (see \eqref{pointwise-decay}) has been obtained in the thesis by Xue \cite{xue}. Here, we extend this result to arbitrary frequency scales with the optimal derivative loss in \eqref{pointwise-decay} and prove the endpoint Strichartz estimates in Theorem \ref{thm-strichartz}.
\
%%%%%%%%%%%%%%%%%%%%%%%%%%%%%%%%%%%%%%%%%%%%%%%
\subsection{Scattering of the cubic Dirac equations on a curved space-time}
%The purpose of this paper is to investigate global behaviour of solutions to nonlinear Dirac equations on a curved background.
As an application of the endpoint Strichartz estimates Theorem \ref{thm-strichartz} we establish the global well-posedness and scattering of the massive cubic Dirac equation on a $(1+3)$-dimensional asymptotically flat background.

We first recall the linear Dirac equation with massive case $M>0$ on the Minkowski spacetime $(\mathbb R^{1+3},\mathbf m)$, with the metric $\mathbf m=\textrm{diag}(-1,+1,+1,+1)$. The linear Dirac equation \cite{D} is given by
\begin{align*}
	-i\gamma^\mu\partial_\mu\psi+M\psi = 0,
\end{align*}
where $\psi$ is the unknown spinor field $\psi:\mathbb R^{1+3}\rightarrow \mathbb C^4$ and the gamma matrices $\gamma^\mu$ are the constant complex matrices satisfying the following algebraic relations:
\begin{align}
\gamma^\mu\gamma^\nu+\gamma^\nu\gamma^\mu = -2m^{\mu\nu}\mathbf I_{4\times4}.	
\end{align}
A typical argument of studying the Cauchy problems of the Dirac equations is to reformulate the equations as a system of the half-wave for the Klein-Gordon equations. To do this, we first multiply $\gamma^0$ on the equation to get
\begin{align*}
	(-i\partial_t -i\gamma^0\gamma^j\partial_j +M\gamma^0 )\psi  =0.
\end{align*}
Then our desired reformulation can be performed provided that there exists a projection operator $\Pi_\pm^M$ so that it satisfies $\Pi_\pm^M\Pi_\pm^M=\Pi_\pm^M$ and $\Pi_\pm^M\Pi_\mp^M=0$ and the following identity holds:
\begin{align}\label{flat-dirac-prop}
\langle\nabla_x\rangle_M(\Pi_+^M - \Pi_-^M) = -i\gamma^0\gamma^j\partial_j+M\gamma^0. 
\end{align}
Now we introduce the operators, which are defined as the Fourier multipliers
\begin{align}\label{dirac-proj-flat}
	\Pi_\pm^M(\xi) = \frac12\left( \mathbf I_{4\times4} \pm \frac{\xi_j\gamma^0\gamma^j+M\gamma^0}{\langle\xi\rangle_M} \right).
\end{align}
By a straightforward computation, one can deduce that $\Pi_\pm^M\Pi_\pm^M=\Pi_\pm^M$ and $\Pi_\pm^M\Pi_\mp^M=0$, and also the relation \eqref{flat-dirac-prop}. In consequence, the homogeneous Dirac equation is reformulated as the half Klein-Gordon equations as follows:
\begin{align*}
    (-i\partial_t\pm\langle\nabla_x\rangle_M)\Pi^M_\pm \psi = 0,
\end{align*}
where $\widehat{\Pi^M_\pm\psi}(\xi)=\Pi^M_\pm(\xi)\widehat{\psi}(\xi)$ and $\psi=\Pi^M_+\psi+\Pi^M_-\psi$.

A representative toy model for the study of nonlinear Dirac equations is the Dirac equation with a cubic nonlinearity which has the form:
\begin{align}\label{cubic-dirac-flat}
    (-i\gamma^\mu\partial_\mu+M)\psi = (\psi^\dagger\gamma^0 C\psi)C\psi,
\end{align}
where $C\in\mathbb C^4\times\mathbb C^4$ is some complex matrix. For example, we call the equation \eqref{cubic-dirac-flat} the Soler model \cite{soler} if $C=I_{4\times4}$ and the Thirring model \cite{thirring} if $C=\gamma^\mu$.

Solutions to the cubic Dirac equation with $M=0$ can be rescaled and the invariant Sobolev space $\dot{H^1}(\mathbb R^3)$ is called the critical Sobolev space. Also if $M>0$ we call $H^1(\mathbb R^3)$ the critical space.

Our main concern is to investigate the initial value problems for the Dirac equations on a curved background. Now the Minkowski metric $\mathbf m$ is replaced by a generally curved metric $g$. For simplicity we assume that the metric $g$ is decoupled, i.e., the metric $g$ is written by
\begin{align*}
	g = -dt^2+g_{ij}(t,x)dx^idx^j.
\end{align*}
On a curved space-time, the gamma matrices $\gamma^\mu$ are no longer constant matrices. The algebraic relation of the matrices $\gamma^\mu$ is now
\begin{align*}
	\gamma^\mu\gamma^\nu+\gamma^\nu\gamma^\mu = -2g^{\mu\nu}(t,x)\mathbf I_{4\times4}.
\end{align*}
The homogeneous covariant Dirac equation is given by
\begin{align*}
	(-i\gamma^\mu(t,x)\mathbf D_\mu+M)\psi = 0,
\end{align*}
where the covariant derivative $\mathbf D_\mu$ acting on a spinor field is defined by
\begin{align*}
	\mathbf D_\mu = \partial_\mu-\Gamma_\mu.
\end{align*}
The spinorial affine connections $\Gamma_\mu$ are matrices satisfying bounds $|\Gamma_\mu|\lesssim |\partial_{t,x}g_{ij}|$. We refer to Section \ref{sec:dirac} for the details.
We consider the Cauchy problem for the massive Dirac equation with a cubic-type nonlinearity:
\begin{align*}
	(-i\gamma^\mu\mathbf D_\mu+M)\psi ={}& (\psi^\dagger\psi)\gamma^0\psi, \\
	\psi|_{t=0}={}& \psi_0,
\end{align*}
where the metric $g$ satisfies a weak asymptotic flatness condition in dimension $d=3$: \eqref{dirac-asymp-flat1}, \eqref{dirac-asymp-flat2}, \eqref{dirac-asymp-regular}.
Now we state our second main theorem.
\begin{thm}[Global well-posedness for the cubic Dirac equations]\label{thm-dirac-gwp}
Let $s>1$ and $M>0$ be given. Suppose that the metric $g$ satisfies \eqref{dirac-asymp-flat1}, \eqref{dirac-asymp-flat2}, and \eqref{dirac-asymp-regular} for $\epsilon>0$  sufficiently small. Then, the Cauchy problem for the cubic Dirac equation on the spacetime $(\mathbb{R}^{1+3}, g)$ is globally well-posed for small initial data $\psi_0\in H^s(\mathbb R^3)$. Furthermore, the solutions scatter to free solutions for $t\rightarrow \pm\infty$.
\end{thm}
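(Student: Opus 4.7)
The plan is to reduce the covariant cubic Dirac equation to a coupled system of half Klein--Gordon equations to which Theorem~\ref{thm-strichartz} applies, and then run a small--data Picard iteration following the strategy of Machihara--Nakanishi--Ozawa. The first step is algebraic: after multiplying by $\gamma^0$ one constructs, in analogy with \eqref{dirac-proj-flat}, projection operators $\Pi^M_\pm(t,x,D_x)$ built from the variable gamma matrices so that $\psi = \Pi^M_+\psi + \Pi^M_-\psi$ and, modulo lower order terms, $\psi_\pm := \Pi^M_\pm\psi$ solve
\[
(D_t + A^\pm(t,x,D_x))\psi_\pm = \Pi^M_\pm\bigl((\psi^\dagger\psi)\gamma^0\psi\bigr) + \mathcal R(\psi),
\]
where the principal symbol of $A^\pm(t,x,\xi)$ is exactly the one computed in Section~\ref{sec:intro} from the metric $\mathfrak g$. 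The remainder $\mathcal R(\psi)$ collects the spinorial connection contribution $-i\gamma^0\Gamma_\mu\psi$ together with the commutator $[D_t + A^\pm,\Pi^M_\pm]\psi$; both are at most zeroth order operators whose symbols are linear in $\partial_{t,x}g^{ij}$ and $\Gamma_\mu$, and by the pointwise bound $|\Gamma_\mu|\lesssim|\partial_{t,x}g|$ together with \eqref{dirac-asymp-flat1}--\eqref{dirac-asymp-regular} they are small in the relevant dyadic norms.

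Next I would verify that the symbols $A^\pm(t,x,\xi)$ obtained in this way satisfy the structural assumptions \eqref{as1-symbol-reg}--\eqref{as3-symbol-decay}. This is a direct computation: the decay hypotheses on $g^{ij}-\delta^{ij}$ translate, after Littlewood--Paley decomposition and the rescaling of Subsection~\ref{subsec:scaling}, into the required symbol bounds uniformly in the dyadic frequency $\lambda\geq 1$. Consequently, Theorem~\ref{thm-strichartz} applies to the evolution of $\psi_\pm$ in the range $|s|<\tfrac{d+1}{2}=2$, which contains the subcritical interval $s\in(1,2)$ of Theorem~\ref{thm-dirac-gwp}.

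For the nonlinear step I would fix $s>1$ and work in a resolution space $X = C_tH^s_x \cap L^{p_1}_tL^{q_1}_x \cap L^{p_2}_tL^{q_2}_x$, where each $(p_j,q_j)$ is admissible in the sense of Theorem~\ref{thm-strichartz} with a parameter $\theta_j\in(0,1]$ chosen so that in dimension three the endpoint $p_1=2$ is allowed at the cost of a derivative loss that is absorbed by the margin $s>1$. H\"older's inequality and Sobolev embedding then yield a trilinear estimate of the form
\[
\bigl\|\langle D_x\rangle^{s}\Pi^M_\pm\bigl((\psi^\dagger\psi)\gamma^0\psi\bigr)\bigr\|_{L^{p_2'}_t L^{q_2'}_x} \lesssim \|\psi\|_X^3,
\]
which, together with the linear bound of Theorem~\ref{thm-strichartz} and the smallness of $\mathcal R(\psi)$, closes a contraction on a small ball in $X$. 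Scattering is then read off from the finiteness of $\|\psi\|_X$: the free profiles obtained by pulling $\psi_\pm(t)$ back along the linear half Klein--Gordon flow converge in $H^s$ because the Duhamel term is Cauchy in that norm.

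The main obstacle I anticipate is not the cubic interaction itself, which becomes standard once Strichartz estimates are in hand, but rather the treatment of $\mathcal R(\psi)$. The commutator $[D_t + A^\pm, \Pi^M_\pm]$ is a pseudo--differential operator whose symbol depends nonlinearly on the variable metric and its derivatives, and one must check that, after dyadic localisation and the rescaling used in Section~\ref{sec:para}, this commutator fits within the perturbative framework of \eqref{as1-symbol-reg}--\eqref{as3-symbol-decay} so that it and the connection term $-i\gamma^0\Gamma_\mu\psi$ can be absorbed uniformly in $\lambda$. Provided this bookkeeping succeeds, the subcritical margin $s>1$ means that no null structure on the nonlinearity is needed.
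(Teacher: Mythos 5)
Your proposal follows essentially the same route as the paper: reduce the covariant cubic Dirac equation via pseudodifferential projections $\Pi^M_\pm(t,x,D_x)$ to a system of perturbed half Klein--Gordon equations, verify (as in Lemma~\ref{lem-dirac-kg}) that the resulting symbols satisfy the structural hypotheses \eqref{as1-symbol-reg}--\eqref{as3-symbol-decay}, invoke Theorem~\ref{thm-strichartz} with a small $\theta>0$ to reach $L^2_tL^\infty_x$ after Sobolev embedding, close a contraction in a space built on $L^\infty_tH^s_x$ and $L^2_tL^\infty_x$, and read off scattering from the Duhamel term being Cauchy in $H^s_x$. The one organizational difference is that you propose to keep the connection and commutator terms as a forcing-side remainder $\mathcal R(\psi)$ to be absorbed iteratively, while the paper renormalizes them directly into the operator $A^\pm$ (via $\mathcal E$, $(1+\mathcal E)^{-1}$, and $\mathfrak E^\pm$) before applying the Strichartz/local-energy machinery; your own closing paragraph already acknowledges that this absorption ultimately reduces to checking the same symbol hypotheses, so in substance the arguments coincide.
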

%The key ingredient for the proof is the endpoint Strichartz estimate for the half Klein-Gordon operators $D_t+A^\pm(t,x,-i\nabla_x)$, where $A^\pm(t,x,-i\nabla_x)=\pm\langle\nabla_x\rangle+\mathfrak E^\pm(t,x,-i\nabla_x)$.
\subsubsection*{Previous results}
Dirac equations on curved space-time have been studied by a number of authors. We refer the readers to \cite{parker} for an expository literature of the relativistic quantum field theory. The spectrum of the Dirac operator is studied by B\"ar \cite{bar}. Recently, the dispersive properties of the linear Dirac operators on a curved backgrounds have been studied.  Cacciafesta-Suzzoni \cite{cacciasu1} establish local smoothing estimates for Dirac equations on asymptotically Euclidean space. Then local-in-time and global-in-time Strichartz estimates for the Dirac equations are established by Cacciafesta, Suzzoni, Ben-Artzi, and Zhang \cite{cacciasu,artzcaccia} on a spherically symmetric manifold and in \cite{dancona} on the cosmic string background. In \cite{caccia1} the authors obtain the Strichartz estimates excluding the endpoint $p=2$ under some nontrapping condition using the work of Zhang-Zheng \cite{zhangzheng}. More recently, local-in-time Strichartz estimates for Dirac equations as well half wave and half Klein-Gordon equations are obtained on compact manifolds without boundary \cite{caccia2}.

A key approach of studying the dispersive properties of the Dirac operator in the previous results is to utilise the so-called squaring trick. In other words, by squaring the Dirac operator, one obtains the Klein-Gordon operator with a geometric quantities and study the Klein-Gordon equation which is second order operator. However, we do not follow this strategy since it leads to a derivative nonlinearity. Instead,  we reformulate the Dirac equation on a curved background into a variable coefficient half-Klein-Gordon equation.

We refer the readers to \cite{escobedovega} for the comprehensive study of nonlinear Dirac equations.
Global estimates for the cubic nonlinearity call for $L^2_tL^\infty_x$-estimates, which is the forbidden endpoint. However, in the massive case $M>0$, one can exploit the better time-decay $t^{-1-\frac\theta2}$ for any $0\le\theta\le1$ and establish global well-posedness for small initial data in $H^s(\mathbb R^3)$, $s>1$.
In this way, for any choice of $C$, global well-posedness and scattering results have been obtained using the endpoint Strichartz estimate, which is due to Machihara - Nakamura - Nakanishi - Ozawa \cite{machihara}.
In more specific settings, e.g. when $C$ is the identity matrix $I_{4\times4}$, the nonlinearity exhibits null structure  and scattering results for a small initial data in the critical Sobolev spaces $H^1(\mathbb R^3)$ and $H^{\frac12}(\mathbb{R}^2)$ have been established by Bejenaru - Herr \cite{behe,behe1}. The analogous results for the massless case $M=0$ are obtained by Bournaveas - Candy \cite{boucan}. 

Once we have established the global $L^2_t$-endpoint Strichartz estimate for the half-Klein-Gordon equation the proof of Theorem \ref{thm-dirac-gwp} follows immediately, similarly to \cite{machihara,machihara1}. %Indeed, after the reformulation of the Dirac equation in terms of the half-Klein-Gordon equation, Lemma \ref{lem-dirac-kg} shows that the assumptions \eqref{dirac-asymp-flat1} and \eqref{dirac-asymp-flat2} imply the assumptions \eqref{as1-symbol-reg}, \eqref{as2-symbol-small}, and \eqref{as3-symbol-decay}. In consequence, we can apply Theorem \ref{thm-strichartz} directly and prove Theorem \ref{thm-dirac-gwp}.

We would like to also mention that in the proof of Theorem \ref{thm-dirac-gwp} we do not use the precise structure of the cubic nonlinearity.
In fact, any cubic half-Klein-Gordon equation can be handled, e.g.
\begin{align}
    (D_t\pm \mathrm{Op}(\sqrt{1+g^{ij}(t,x)\xi_i\xi_j}))u = Q(u)
\end{align}
for any cubic term $Q(u)$. 
%Thus it is plausible that one could improve the regularity if one exploits the null structure given by the genuine nonlinearity of the Dirac equation. 

%\begin{thm}[Endpoint Strichartz estimates for the half Klein-Gordon equations]\label{thm-strichartz}
%Let $d\ge3$. The solution $u$ to
%	\begin{align*}
%	(D_t+A^\pm(t,x,-i\nabla_x))u =f, \\
%	u|_{t=0} = u_0 \in H^s_x(\mathbb R^d).	
%	\end{align*}
%satisfies the endpoint Strichartz estimates
%	\begin{align}
%	\|u\|_{\langle D_x\rangle^{s-\sigma_1}L^{p_1}_tL^{q_1}_t(\mathbb R^{1+d})} & \lesssim \|u_0\|_{H^s_x(\mathbb R^d)}+ \|f\|_{\langle D_x\rangle^{-s-\sigma_2}L^{p_2'}_tL^{q_2'}_x(\mathbb R^{1+d})},	
%	\end{align}
%where the pairs $(\sigma_1,p_1,q_1)$ and $(\sigma_2,p_2,q_2)$ satisfy the relations: 
%\begin{align}
%\frac{2}{p}+\frac{d-1+\theta}{q} = \frac{d-1+\theta}{2}, \ \sigma=\sigma(q)=\left( \frac{d+1}{4}+\frac{d+3}{4}\theta \right)\left( 1-\frac2q \right),	\ 0\le\theta\le1,
%\end{align}
%which are referred to as the Stichartz admissible pairs.
%\end{thm}
%%%%%%%%%%%%%%%%%%%%%%%%%%%%%%%%%%%%%%%%%%%%%%%%%%%%%%%%%%%%%%%%
%{\color{red}Discussion of the results, remark on derivatve loss, scaling...}
\subsection*{Organisation}
The rest of this paper is organised as follows. We end this section by introducing notation.

Section \ref{sec:dirac} is devoted to the introduction of the Dirac operator on curved space-time. The most important part of Section \ref{sec:dirac} is to define projection-type operators in order to reformulate the Dirac equation as half-Klein-Gordon equations up to lower-order terms. We also show that these lower order terms turn out to satisfy the error-type estimates \eqref{est-lower-error} provided that the metric satisfies a weak asymptotic flatness assumption \eqref{dirac-asymp-flat1} and \eqref{dirac-asymp-flat2}. Hence the study of the Dirac equation is reduced to a more general class of the half-Klein-Gordon equations.

Section \ref{sec:local-energy-estimate} contains the most important setup of the reduction via the localisation of space and frequency simultaneously.
Beginning with the original half-Klein-Gordon operator $D_t+A^\pm$, we mollify
\begin{align*}
    q(t,x,\xi) = \sqrt{1+g^{ij}\xi_i\xi_j }-\sqrt{1+|\xi|^2},
\end{align*}
by truncation and regularisation of the coefficients,
and we obtain a family of mollified operators $A^\pm_{(k)}$, which obey the frequency-localisation property up to an acceptable error. From this we define a globally mollified operator
\begin{align*}
    \widetilde A^\pm = \sum_{k=-\infty}^{\infty}A^{\pm}_{(k)}S_k,
\end{align*}
where $S_k$ is the projection onto the frequency annulus of size $2^k$. Proposition \ref{prop-mollified-operator} allows us to replace the original $A^\pm$ by $\widetilde{A}^\pm$ at the expense of a small error. We also apply a scaling argument and analyse the scaled operator $\mathcal A^\pm_{(k)}$, which is localised at unit frequency (for each $k$).

In Section \ref{sec:local-energy-estimate}, assuming that we have constructed an outgoing parametrix for the scaled operator $D_t+\mathcal A^\pm_{(k)}$ (which is postponed to Section \ref{sec:para})  satisfying the dispersive estimate, we obtain an outgoing parametrix for the operator $D_t+A^\pm_{(k)}$. Hence, we obtain a parametrix for the globally mollified operator $D_t+ \widetilde A^\pm$ satisfying endpoint Strichartz estimates, which immediately imply the Strichartz estimates for the original half-Klein-Gordon operator due to Proposition \ref{prop-mollified-operator}. Then, we establish small data global well-posedness and scattering for the cubic Dirac equation as a direct application.

In Section \ref{sec:micro}, we recall the phase space transform and  microlocal analysis from \cite{metataru}.

Finally, Section \ref{sec:para} is devoted to the construction of an outgoing parametrix for the scaled operators $D_t+\mathcal A^\pm_{(k)}$. The main novelty of Section \ref{sec:para} is the dispersive inequality with time decay $t^{-\frac d2}$ with the sharp loss of regularity.

%Section \ref{sec:dirac} is devoted to the introduction of the Dirac operator on curved space-time. The most important part of Section \ref{sec:dirac} is to define  projection-type operators in order to reformulate the Dirac equation as half-Klein-Gordon equations.

%Based on the properties of a parametrix (Proposition \ref{prop-parametrix}), which rely on the microlocal analysis of Section \ref{sec:micro} and the construction in Section \ref{sec:para}), we prove the endpoint Strichartz estimates and establish small data global well-posedness and scattering for the cubic Dirac equation in Section \ref{sec:local-energy-estimate}. 

%We recall the phase space transform and preliminaries of microlocal analysis from \cite{metataru} in Section \ref{sec:micro}.

%The outgoing parametrix for the operators $D_t+A$ is constructed in Section \ref{sec:para}. The main estimate of Section \ref{sec:para} is the dispersive inequality with the time decay $t^{-\frac d2}$.

\subsection*{Notation}\label{sec:not}

The $L^2-$inner product is given by $\langle\cdot,\cdot\rangle_{L^2}$ so that 
\begin{align*}
\langle f,g\rangle_{L^2_x} = \int_{\mathbb R^d} f(x)\overline{g(x)}\,dx .	
\end{align*}
For complex-valued functions $f,g\in L^2$, we have $\langle f,g\rangle_{L^2} = \overline{\langle g,f\rangle}_{L^2}$.

We write $(D_t,D_x)= (\frac1i\partial_t,\frac1i\partial_x)$. For any multi-index $\alpha\in\mathbb N^d_0$ with $\alpha=(\alpha_1,\cdots,\alpha_d)$, by the notation $\partial_x^\alpha$ we mean $\partial_x^\alpha=\partial_{x_1}^{\alpha_1}\cdots\partial_{x_d}^{\alpha_d}$, where $x=(x_1,\cdots,x_d)\in\mathbb R^d$. We also set $x_0=t$. The partial derivative $\partial_\xi^\beta$ is defined in the similar way.

We adapt the Einstein summation convention. Repeated Greek indices $\mu,\nu,\cdots$ mean summation from $0$ to $d$, and repeated Latin indices $i,j,\cdots$, mean summation from $1$ to $d$. For example, on $\mathbb R^{1+3}$, the Dirac operator $-i\gamma^\mu\mathbf D_\mu=-i\gamma^0\mathbf D_0-i\gamma^j\mathbf D_j = -i\gamma^0\mathbf D_0-i\sum_{j=1}^3\gamma^j\mathbf D_j$.

We usually denote dyadic numbers $\lambda,\mu,\nu\in2^{\mathbb Z}$.

For two positive numbers $A$ and $B$, we write $A\lesssim B$ if $A\le CB$, for some absolute constant $C$ (which only depends on irrelevant parameters). If $C$ can be chosen sufficiently small, we write $A\ll B$.
Also, we write $A\approx B$ if both $A\lesssim B$ and $B\lesssim A$.

The mixed Lebesgue space $L^p_tL^q_x(\mathbb R^{1+d})$ for $1\le p,q\le\infty$ is the set of all equivalence classes of measurable functions $u$ given by $t\mapsto u(t)\in L^q_x(\mathbb R^d)$, and the norm is given by
\begin{align*}
	\|u\|_{L^p_tL^q_x(\mathbb R^{1+d})}^p = \int_{\mathbb R} \left( \int_{\mathbb R^d}|u(t,x)|^q\,dx \right)^{\frac{p}{q}}\,dt,
\end{align*}
with the obvious modification if $p,q=\infty$.
We set $\|u\|_{\langle D_x\rangle^s L^p_tL^q_x}=\|\langle D_x\rangle^{-s}u\|_{L^p_tL^q_x}$ for $s\in\mathbb R$.

Given a small $\epsilon>0$ as in \eqref{dirac-asymp-flat1}, we can find a sequence $\{\epsilon_j\}_{j\in\mathbb Z}\in\ell^1$ such that
\begin{align*}
	\sup_{(t,x)\in A_j}\left( |x|^{|\alpha|}|\partial_{t,x}^\alpha ( g^{ij}(t,x)-\delta^{ij})|\right)\le \epsilon_j
\end{align*}
and $\sum_{j}\epsilon_j\lesssim\epsilon$. Without loss of any generality, we may assume that the sequence $\epsilon_j$ is slowly varying, which says
\begin{align*}
	|\log\epsilon_j-\log\epsilon_{j-1}| \le 2^{-10}.
\end{align*}
We can also choose a function $\epsilon:\mathbb R^+\rightarrow\mathbb R^+$ satisfying 
\begin{align}\label{ep-property}
\begin{aligned}	
	\epsilon_j < \epsilon(\mathfrak s) < 2\epsilon_j, &\quad 2^j < \mathfrak s<2^{j+1}., \\
	|\epsilon'(\mathfrak s)| \le &2^{-5}\mathfrak s^{-1}\epsilon(\mathfrak s),
	\end{aligned}
\end{align}
which implies that
\begin{align}\label{prop-eps1}
	\int_0^\infty \frac{\epsilon(\mathfrak s)}{\mathfrak s}\,d\mathfrak s \approx \epsilon.
\end{align}
We also define the function $\epsilon_k:\mathbb R^+\rightarrow\mathbb R^+$ for $j\in\mathbb Z$ such that
\begin{align*}
	\epsilon_k(\mathfrak s) \approx \epsilon_j, \ \mathfrak s\approx 2^j, \ j\ge -k, \\
	\epsilon_k(\mathfrak s) \approx \epsilon_{-k}, \ s\le 2^{-k}.
\end{align*}
We consider a frequency Littlewood-Paley decomposition
\begin{align*}
    1= \sum_{j=-\infty}^{\infty}S_j(D_x)
\end{align*}
where 
\begin{align*}
    \textrm{supp }s_j\subset \{2^{j-1}<|\xi|<2^{j+1}\}.
\end{align*}
We also consider a smooth spatial Littlewood-Paley decomposition:
\begin{align*}
    1= \sum_{j=-\infty}^\infty \chi_j(x), \quad \mathrm{supp}\,\chi \subset \{ x\in \mathbb R^d :  2^{j-1} < |x| < 2^{j+1} \}.
\end{align*}
We also set $\chi_{<j} = \sum_{k<j} \chi_k $.
%%%%%%%%%%%%%%%%%%%%%%%%%%%%%%%%%%%%%%%%%%%%%%%%%%%%%%%%%%%%%%%%
\section{Dirac operators}\label{sec:dirac}
%We study the Cauchy problems for the Dirac equations on a curved space-time by adapting the aforementioned strategy. 
This section is devoted to the introduction of covariant Dirac operators on a curved space-times, see \cite{parker}. We consider the Cauchy problems for the Dirac equation with a cubic nonlinearity on a weak asymptotically flat space-time. We reformulate the Dirac equations in terms of the half-Klein-Gordon equations by using a projection-type operator and then restate Theorem \ref{thm-dirac-gwp} in terms of the cubic half-Klein-Gordon equations.

In what follows, we restrict ourselves into a curved metric of the form:
\begin{align*}
	g = -dt^2+ g_{ij}(t,x)dx^idx^j.
\end{align*}
In other words, the metric is decoupled. %We also suppose that the metric satisfies a weakly asymptotic flatness \eqref{weak-asymp-flat}.
The homogeneous covariant Dirac equation is given by
\begin{align}\label{eq-dirac-cov}
	(-i\gamma^\mu\mathbf D_\mu+M)\psi = 0.
\end{align}
On a generally curved space-time, the gamma matrices $\gamma^\mu$, $\mu=0,\cdots,3$ are no longer constant matrices. Instead, the matrices $\gamma^\mu=\gamma^\mu(t,x)$ also become space-time dependent complex matrices. The algebraic property is then generalised to 
\begin{align}\label{gamma-alge-curv}
\gamma^\mu\gamma^\nu+\gamma^\nu\gamma^\mu = -2g^{\mu\nu}(t,x)\mathbf I_{4\times4}.	
\end{align}
We define the covariant derivative acting on a spinor field $\psi:\mathbb R^{1+3}\rightarrow\mathbb C^4$ to be
\begin{align}
\mathbf D_\mu \psi = (\partial_\mu-\Gamma_\mu)\psi,	
\end{align}
where the spinorial affine connections $\Gamma_\mu=\Gamma_\mu(t,x)$ are matrices defined by the vanishing of the covariant derivative of the gamma matrices:
\begin{align}\label{affine-spin}
	\mathbf D_\mu \gamma_\nu = \partial_\mu\gamma_\nu-\Gamma^\lambda_{\mu\nu}\gamma_\lambda-\Gamma_\mu\gamma_\nu+\gamma_\nu\Gamma_\mu = 0.
\end{align}
One can represent the $\gamma^\mu(t,x)$ matrices in terms of the gamma matrices $\tilde{\gamma}^\mu$ on the flat spacetime by introducing a \textit{vierbein} $b^\alpha_\mu(t,x)$ of vector fields, defined by
\begin{align}\label{metric-vierbein}
	g_{\mu\nu}(t,x) = m_{\alpha\beta}b^\alpha_\mu(t,x)b^\beta_\nu(t,x).
\end{align}
Then the matrices $\gamma^\mu(t,x)$ can be written in the form
\begin{align}
\gamma^\mu(t,x) = b^\mu_\alpha(t,x)\tilde{\gamma}^\alpha.	
\end{align}
Here we used the notation $\gamma^\mu(t,x)$ and $\tilde\gamma^\mu$ to distinguish the gamma matrices dependent on space-time from the constant matrices.
Now a set of $\Gamma_\mu(t,x)$ satisfying \eqref{affine-spin} are given by
\begin{align}
\Gamma_\mu(t,x) = -\frac14 \tilde{\gamma}_\alpha\tilde{\gamma}_\beta b^{\alpha\lambda}(t,x)\mathbf D_\mu b^\beta_\lambda(t,x),	
\end{align}
where
\begin{align*}
	\mathbf D_\mu b^\beta_\lambda = \partial_\mu b^\beta_\lambda - \Gamma^\sigma_{\mu\lambda}b^\beta_\sigma.
\end{align*}
We refer the readers to \cite[Section 3.9]{parker} for the details. We have an obvious inequality $|\Gamma_\mu(t,x)|\lesssim |\partial_{t,x}g_{ij}|$. See also Proposition 2.1 of \cite{caccia1}.

Motivated by the study of Dirac equations on the flat space-time,
we would like to rewrite the Dirac equation in terms of the half Klein-Gordon equation. 
To do this, as one has done in the flat case, we multiply $\gamma^0$ on the equation \eqref{eq-dirac-cov} and rewrite the equation as
\begin{align*}
	(-i\partial_t-i\gamma^0\gamma^j\mathbf D_j-i\Gamma_0+M\gamma^0)\psi = 0.
\end{align*}
We shall introduce operators $\Pi_+^M$ and $\Pi_-^M$ which satisfy the following identity:
\begin{align}\label{proj-required}
	\langle D_x\rangle_M(\Pi_+^M-\Pi_-^M) = -i\gamma^0\gamma^j\mathbf D_j-i\Gamma_0+M\gamma^0.
\end{align}
However, operators satisfying the above identity cannot be merely defined by Fourier multipliers. Instead, we introduce the pseudodifferential operators 
\begin{align*}
	\Pi_\pm^M(t,x,D_x) &= \frac12 \left( I_{4\times4} \pm \langle  D_x\rangle_M^{-1}(-i(\gamma^0\gamma^j\mathbf D_j+\Gamma_0)+\gamma^0M) \right), \\
	%& = \frac12 \left( I_{4\times4}\pm \langle-i \nabla_x\rangle_M^{-1}(-i\gamma^0\gamma^j\partial_j-i\gamma^0\gamma^j\Gamma_j-i\Gamma_0+\gamma^0M) \right),
\end{align*}
where the differential operator $\langle D_x\rangle_M$ is defined by the Fourier multiplier whose symbol is $\sqrt{M^2+|\xi|^2}$. %In the sequel we simply write the operator $\langle-i\nabla_x\rangle_M$ as $\langle\nabla_x\rangle_M$. %These operators satisfy an obvious identity:
%\begin{align}
%\langle\nabla_x\rangle_M(\Pi_+^M(t,x,-i\nabla_x)-\Pi_-^M(t,x,-i\nabla_x) )= 	-i(\gamma^0\gamma^j\mathbf D_j+\Gamma_0)+\gamma^0M.
%\end{align}
It is obvious that these operators $\Pi^\pm_M$ satisfy the identity \eqref{proj-required}.
Note that the operators $\Pi^M_\pm(t,x,D_x)$ are of the class $OPS^0_{1,0}$, or simply $OPS^0$.
Before the study of properties of the operators $\Pi^M_\pm$, we give a brief review of several facts of pseudodifferential operators. We refer the readers to the literature \cite[Section 18]{hoermander}.
\begin{prop}[Theorem 18.1.8 of \cite{hoermander}]
	If $a$ and $b$ are symbols, then formally 
	\begin{align*}
		a(x,D_x)b(x,D_x) = (a\circ b)(x,D_x), 
	\end{align*}
	and has the asymptotic expansion 
	\begin{align*}
		(a\circ b)(x,\xi) = \sum_j \frac{(i\langle\partial_y,\partial_\eta\rangle)^j}{j!} a(x,\eta)b(y,\xi)\bigg|_{\eta=\xi,y=x}.
	\end{align*}
\end{prop}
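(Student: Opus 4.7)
The plan is to derive the composition formula directly from the definition, via a reduction to a compound symbol followed by an asymptotic Taylor expansion in frequency. Concretely, for $u$ in a suitable class (say Schwartz), I first write $b(x,D_x)u(x)=(2\pi)^{-d}\int e^{ix\cdot\xi}b(x,\xi)\hat u(\xi)\,d\xi$ and then apply $a(x,D_x)$ to arrive at the iterated oscillatory integral
\[
a(x,D_x)b(x,D_x)u(x) = (2\pi)^{-2d}\iiint e^{i(x-y)\cdot\eta}e^{i(y-z)\cdot\xi}a(x,\eta)b(y,\xi)u(z)\,dz\,d\xi\,dy\,d\eta\,d\xi.
\]
Integrating in $z$ recovers $\hat u(\xi)$, and after changing variables $y\mapsto x-y$ this brings the composition into the compound-symbol form
\[
(a(x,D_x)b(x,D_x))u(x) = (2\pi)^{-d}\iint e^{i(x-z)\cdot\xi}c(x,x-z,\xi)u(z)\,dz\,d\xi,
\]
where the compound amplitude is $c(x,y,\xi):=(2\pi)^{-d}\int e^{iy\cdot(\eta-\xi)}a(x,\eta)b(x-y,\xi)\,d\eta$.

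Second, I would establish the reduction of compound-symbol operators to standard ones. Writing $c(x,y,\xi)=c(x,x,\xi)+(y-x)\cdot\int_0^1(\partial_y c)(x,x+s(y-x),\xi)\,ds$ and iterating, then integrating by parts in $\xi$ using $(y-x)^\alpha e^{i(x-y)\cdot\xi}=(-D_\xi)^\alpha e^{i(x-y)\cdot\xi}$, one obtains after finitely many steps an expression of the form $(a\circ b)(x,D_x)u(x)+R_N u(x)$, where the principal part is
\[
(a\circ b)(x,\xi)=\sum_{|\alpha|<N}\frac{i^{|\alpha|}}{\alpha!}\,\partial_\xi^\alpha\partial_y^\alpha c(x,y,\xi)\Big|_{y=x}.
\]
Substituting the explicit form of $c$ and identifying $\partial_y c(x,y,\xi)|_{y=x}$ with $-\partial_x^{\cdot}b$ acting through the Fourier kernel, one recovers precisely the series $\sum_j \frac{(i\langle\partial_\eta,\partial_y\rangle)^j}{j!}a(x,\eta)b(y,\xi)|_{\eta=\xi,\,y=x}$ claimed in the statement.

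Third, the remainder $R_N$ should be shown to belong to a symbol class of decreasing order as $N\to\infty$. For symbols $a\in S^{m_1}$ and $b\in S^{m_2}$, standard symbol estimates give $|\partial_\xi^\alpha\partial_y^\alpha c(x,y,\xi)|\lesssim\langle\xi\rangle^{m_1+m_2-|\alpha|}$, and integration by parts against the phase $(x-y)\cdot\xi$ produces arbitrary decay in $|x-y|$; this yields $R_N\in OPS^{m_1+m_2-N}$ in the sense that its Schwartz kernel is controlled with $N$ derivatives lost, which is exactly what is required for the asymptotic sense of the formula.

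The main obstacle is that none of the oscillatory integrals above is absolutely convergent for general $a\in S^{m_1}$, $b\in S^{m_2}$, so every formal manipulation has to be justified by a regularisation argument, e.g.\ inserting a cut-off $\chi(\varepsilon\xi,\varepsilon\eta)$ with $\chi\in C_c^\infty$, $\chi(0,0)=1$, carrying out the computation for the regularised operators, and then passing to the limit $\varepsilon\to 0$ using the uniform symbol bounds. This is precisely where Taylor's formula with integral remainder, together with repeated integrations by parts against $(1-\Delta_\eta)$ and $(1-\Delta_y)$ factors inserted via $\langle x-y\rangle^{-2k}(1-\Delta_\eta)^k e^{i(x-y)\cdot\eta}=e^{i(x-y)\cdot\eta}$, yields the absolutely convergent representation needed to close the argument.
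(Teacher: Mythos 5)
The paper does not prove this proposition; it simply cites Theorem 18.1.8 of H\"ormander, so there is no internal proof to compare against. Your proposal reproduces the standard textbook argument (write the composition as an oscillatory integral, Taylor-expand the amplitude, integrate by parts to generate the asymptotic terms, and estimate the remainder by symbol bounds and regularisation), which is exactly the strategy of the source. Two concrete remarks, however.

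First, the ``compound-symbol'' reduction as you have written it is not correct. After collecting the phase factors in the four-fold integral via $e^{i(x-y)\cdot\eta}e^{i(y-z)\cdot\xi} = e^{i(x-z)\cdot\xi}\,e^{i(x-y)\cdot(\eta-\xi)}$, the amplitude that remains,
\[
(a\circ b)(x,\xi) \;=\; (2\pi)^{-d}\iint e^{i(x-y)\cdot(\eta-\xi)}\,a(x,\eta)\,b(y,\xi)\,dy\,d\eta,
\]
is already a \emph{standard} symbol depending only on $(x,\xi)$; the variable $y$ is an internal integration variable, not a second spatial argument that can be identified with $z$. Your claimed identity $(a(x,D_x)b(x,D_x))u(x) = (2\pi)^{-d}\iint e^{i(x-z)\cdot\xi}c(x,x-z,\xi)u(z)\,dz\,d\xi$ with $c(x,y,\xi)=(2\pi)^{-d}\int e^{iy\cdot(\eta-\xi)}a(x,\eta)b(x-y,\xi)\,d\eta$ drops the $y$-integration and evaluates $b$ at the point $z$ instead of at the internal integration variable; expanding it back out produces a kernel with $b(z,\xi)$ rather than $b(y,\xi)$, which is not the composition. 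The fix is simple: dispense with the compound-symbol reformulation and Taylor-expand the integrand $b(y,\xi)$ (or $a(x,\eta)$) about $y=x$ (resp.\ $\eta=\xi$) directly in the oscillatory integral displayed above; the integration by parts then proceeds as you describe.

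Second, there is a sign inconsistency you should be aware of: the correct composition formula in the left quantisation $a(x,D)u = (2\pi)^{-d}\int e^{ix\cdot\xi}a(x,\xi)\hat u(\xi)\,d\xi$ is $(a\circ b)(x,\xi) \sim \sum_\alpha \tfrac{1}{\alpha!}\partial_\xi^\alpha a \cdot D_x^\alpha b$ with $D_x=-i\partial_x$, i.e.\ the series $\sum_j \tfrac{(-i\langle\partial_y,\partial_\eta\rangle)^j}{j!}a(x,\eta)b(y,\xi)\big|_{\eta=\xi,y=x}$, not $(i\langle\partial_y,\partial_\eta\rangle)^j$; this agrees with the paper's own first-order expansion $a\circ b = ab+\tfrac{1}{i}\partial_\xi a\,\partial_x b + \cdots$ in the display immediately following the proposition, so the sign in the statement of the proposition is a typo in the paper and should not be carried into your proof.
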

In particular, we have the first-order and second-order symbol expansions (see (B-1) of \cite{gajaotataru}.)
\begin{align*}
	a\circ b = ab +\frac1i\int_0^1 r_{1,s}\,ds = ab+\frac1i\frac{\partial a}{\partial\xi}\frac{\partial b}{\partial x}-\frac12 \int_0^1 r_{2,s}\,ds, 
\end{align*}
where the remainder is given by
\begin{align}\label{remainder-comp-sym}
	r_{j,s} (x,\xi) = e^{is\langle\partial_y,\partial_\eta\rangle}\langle\partial_y,\partial_\eta\rangle^j [a(x,\eta)b(y,\xi)]_{y=x,\eta=\xi}.
\end{align}
Using the above results, we deduce that the commutator of pseudodifferential operators can be written as
\begin{align*}
	[a, b](x,D_x) = \{a,b\}(x,D_x) + r(x,D_x),
\end{align*}
where $\{a,b\}$ is the usual Poisson bracket given by
\begin{align*}
	\{a,b\} = \frac{\partial a}{\partial\xi}\frac{\partial b}{\partial x} - \frac{\partial a}{\partial x}\frac{\partial b}{\partial\xi},
\end{align*}
and the remainder term $r(x,D_x)$ is given by \eqref{remainder-comp-sym}.
\begin{prop}[Theorem 18.1.13 of \cite{hoermander}]\label{prop-bdd-pdo}
Let $m\in\mathbb R$ and $a$ be a symbol of the class $S^m$. Then the operator $a(x,D_x)$ is a continuous operator from $H^s_x$ to $H^{s-m}_x$ for every $s\in\mathbb R$. Furthermore, $a(x,D_x)$ is a continuous operator from $H^{s,p}_x$ to $H^{s-m,p}_x$, for $1\le p \le\infty$.	
\end{prop}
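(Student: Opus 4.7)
The plan is to reduce the claim to the case $s=m=0$, that is, to $L^p$-boundedness of an operator with $S^0_{1,0}$ symbol, and then to separate the $L^2$ argument from the $L^p$ extension. For the reduction, I would form
\[
T := \langle D_x\rangle^{s-m}\circ a(x,D_x)\circ\langle D_x\rangle^{-s},
\]
and apply the composition formula from the preceding proposition, together with $\langle\xi\rangle^\sigma\in S^\sigma_{1,0}$, to conclude that the symbol of $T$ lies in $S^0_{1,0}$. Since $\langle D_x\rangle^\sigma$ is an isometric isomorphism $H^\sigma_x\to L^2_x$ and an isomorphism $H^{\sigma,p}_x\to L^p_x$ for $1<p<\infty$, the problem becomes: every operator with $S^0_{1,0}$ symbol is bounded on $L^p$.

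For the $L^2$-bound I would invoke the Calderón--Vaillancourt theorem. Decompose the symbol dyadically in $\xi$ via Littlewood--Paley and, on each annulus $|\xi|\approx 2^k$, use a partition of unity in $x$ on the dual scale; the compositions of the resulting pieces $T_{k,\ell}$ satisfy an almost-orthogonality estimate of the form
\[
\|T_{k,\ell}^*\,T_{k',\ell'}\|_{L^2\to L^2}+\|T_{k,\ell}\,T_{k',\ell'}^*\|_{L^2\to L^2}\lesssim (1+|k-k'|+|\ell-\ell'|)^{-N},
\]
obtained by integrating by parts in the oscillatory kernel using the symbol bounds. Cotlar--Stein then delivers the $L^2$-bound with constant depending on finitely many seminorms of $a$. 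A symbolic-calculus alternative would be to observe that $(a^*\circ a)(x,D_x)$ has principal symbol $|a|^2$ modulo $S^{-1}$, iterate this relation, and conclude via $\|a(x,D_x)\|_{L^2\to L^2}^{2^N}\lesssim\|((a^*\circ a)^{\circ 2^{N-1}})(x,D_x)\|_{L^2\to L^2}$ as $N\to\infty$, together with the trivial bound for compactly supported kernels at the end of the iteration.

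For the $L^p$-bound with $1<p<\infty$ I would verify that the Schwartz kernel
\[
K(x,y)=(2\pi)^{-d}\int_{\mathbb R^d} e^{i(x-y)\cdot\xi}\,a(x,\xi)\,d\xi
\]
is a Calderón--Zygmund kernel off the diagonal. Splitting $a$ into a high- and low-frequency part via a cutoff $\chi_{|\xi|\le 1}$ and integrating by parts repeatedly via $e^{i(x-y)\cdot\xi}=|x-y|^{-2N}(-\Delta_\xi)^N e^{i(x-y)\cdot\xi}$ in the high-frequency piece yields $|\partial^\alpha_x\partial^\beta_y K(x,y)|\lesssim_{\alpha,\beta}|x-y|^{-d-|\alpha|-|\beta|}$ for $x\ne y$, while the low-frequency piece produces a smooth bounded kernel. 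Combined with the $L^2$-bound, Calderón--Zygmund theory supplies strong $(p,p)$ for $1<p<\infty$ and weak $(1,1)$; the endpoints $p=1,\infty$ have to be interpreted via the usual $\mathcal H^1$/$\mathrm{BMO}$ replacements rather than $L^1$/$L^\infty$. The main obstacle is the $L^2$-bound itself: for a $S^0_{1,0}$ symbol neither the symbol nor the kernel is absolutely integrable, so one must extract cancellation through either Cotlar--Stein or the symbolic iteration trick; once that is in place, both the reduction above and the Calderón--Zygmund step are routine.
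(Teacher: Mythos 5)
The paper does not provide a proof of this proposition at all: it is stated as a direct citation of Theorem 18.1.13 in H\"ormander's book, with the authors only adding the remark that the operator norm is controlled by finitely many seminorms of the symbol. So there is no internal argument in the paper to compare against, and your job here reduces to whether your sketch reproduces a correct version of the standard proof.

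Your sketch is essentially the standard textbook route and is in order modulo two points worth making explicit. First, the reduction by conjugation with $\langle D_x\rangle^{\sigma}$ is fine, but you only need the very easy case of the composition calculus (Bessel-potential times an $S^m$ symbol) and not the full asymptotic expansion; it is worth saying so, since the composition proposition precedes this one and using the full machinery risks circularity concerns in other presentations. Second, and more substantively, for an $S^0_{1,0}$ symbol Calder\'on--Vaillancourt is not needed: the kernel estimate $|\partial^\alpha_x\partial^\beta_y K(x,y)|\lesssim |x-y|^{-d-|\alpha|-|\beta|}$ that you derive for the Calder\'on--Zygmund step already gives the $L^2$ bound through a Cotlar--Stein argument that exploits the $\rho=1$ decay, or more directly through the dyadic Littlewood--Paley decomposition you describe, so the appeal to Calder\'on--Vaillancourt (which is really the $S^0_{0,0}$ theorem and is invoked separately in the paper for exactly that class) is dispensable here. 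Finally, you correctly flag that strong $(1,1)$ and $(\infty,\infty)$ fail for general $S^0_{1,0}$ operators; the paper's statement with ``$1\le p\le\infty$'' must be read in H\"ormander's sense of the spaces $H_{(s)}^p$, which at the endpoints are not the naive $L^1$/$L^\infty$ Bessel-potential spaces, and your remark about the $\mathcal H^1$/$\mathrm{BMO}$ substitutes is the correct way to understand this. So your argument is a correct outline of the standard proof; the only thing it genuinely differs from is that the paper offers no proof to differ from.
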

It is remarkable that the norm of the operator $a(x,D_x):H^{s}_x\rightarrow H^{s-m}_x$ can be estimated by semi-norms of the symbol $a\in S^m$. See also the proof of Theorem 18.1.11 of \cite{hoermander}. We also recall the theorem by Calder\'on-Valliancourt:
\begin{prop}
If a symbol $a(x,\xi)$ has bounded derivatives $\partial_x^\alpha\partial_\xi^\beta a(x,\xi)$ for $|\alpha|+|\beta|\le N_d$ with some fixed number $N_d$ dependent on the dimension $d$, then the associated pseudodifferential operator $a(x,D_x)$ is $L^2$-bounded.	
\end{prop}
Furthermore, the number $N_d$ can be improved to $[\frac d2]+1$, see Cordes \cite{cordes}. Thus, for boundedness of the pseudodifferential operators, we need only a restricted number of derivatives of the symbols. In particular, if one considers the $L^2$-norm of the commutator of the pseudodifferential operators of the form $[A(x),B(D_x)]$, we only need to compute the derivatives $D_x^\alpha a(x) D_\xi^\alpha b(\xi)$ for $|\alpha|\le [\frac d2]+3$.

%In particular, if $m=0$ and $a\in S^0_{0,0}$, using the Calder\'on-Vaillancourt theorem \cite{calderon}, we have the following $L^2$-boundedness:
%\begin{align}\label{est-calvail2}
%\|a(x,D_x)f\|_{L^2_x} \lesssim \sup_{|\alpha|,|\beta|\leq d+1}\sup_{(x,\xi)\in\mathbb R^d\times\mathbb R^d}|\partial_x^\alpha\partial_\xi^\beta a(x,\xi)| \|f\|_{L^2_x}.	
%\end{align}
From now on we put $M=1$ for simplicity.
\begin{lem}\label{lem-dirac-proj}
	We have the following commutator identities:
	\begin{align}
	\Pi_\pm^M(t,x,D_x)	\Pi_\pm^M(t,x,D_x)	& = \Pi_\pm^M(t,x,D_x)	+\mathcal E(t,x,D_x), \\
	\Pi_\pm^M(t,x,D_x) \Pi_\mp^M(t,x,D_x) & = -\mathcal E(t,x,D_x)
		\end{align}
with the pseudodifferential operator $\mathcal E(t,x,D_x)=\sum_{m=0}^3\mathcal E^{-m}$, where 
\begin{align*}
	\mathcal E^0 & = \langle D_x\rangle^{-2}(g^{jk}-\delta^{jk})D_jD_k \\
	\mathcal E^{-1} &= \langle D_x\rangle^{-2}(\gamma^0\gamma^jD_j(\gamma^0\gamma^k)D_k-i\gamma^0\gamma^j\Gamma_j\gamma^0\gamma^kD_k-i\gamma^0\gamma^j\gamma^0\gamma^k\Gamma_kD_j) \\
	&\qquad+\langle D_x\rangle^{-1}[\gamma^0\gamma^j,\langle D_x\rangle^{-1}](D_j(\gamma^0\gamma^k)D_k+\gamma^0\gamma^kD_jD_k) \\
	& \qquad +\langle D_x\rangle^{-1}([\gamma^0\gamma^jD_j,\langle D_x\rangle^{-1}](i\gamma^0\gamma^k\Gamma_k-i\Gamma_0)+[\gamma^0\gamma^j,\langle D_x\rangle^{-1}](D_j\gamma^0) \\
	&\qquad\qquad+[i\gamma^0\gamma^j\Gamma_j-i\Gamma_0,\langle D_x\rangle^{-1}]\gamma^0\gamma^kD_k), \\
	\mathcal E^{-2} & = \langle D_x\rangle^{-2}\big(-i\gamma^0\gamma^j D_j(\gamma^0\gamma^k)\Gamma_k-i\gamma^0\gamma^j\gamma^0\gamma^k(D_j\Gamma_k)+\gamma^0\gamma^k(D_j\gamma^0)-\gamma^0\gamma^j\Gamma_j\gamma^0\gamma^k\Gamma_k\\
    &\phantom{\langle D_x\rangle^{-2}\big(-i\gamma^0\gamma^j D_j(\gamma^0\gamma^k)\Gamma_k-i\gamma^0\gamma^j\gamma^0\gamma^k(D_j\Gamma_k)+} -i\gamma^0\gamma^j\Gamma_j\gamma^0-i\gamma^k\Gamma_k\big) \\
	& \qquad + \langle D_x\rangle^{-1}([\gamma^0\gamma^j,\langle D_x\rangle^{-1}]\gamma^0D_j+[i\gamma^0\gamma^j\Gamma_j-i\Gamma_0+\gamma^0,\langle D_x\rangle^{-1}](i\gamma^0\gamma^k\Gamma_k-i\Gamma_0), \\
	\mathcal E^{-3} & = \langle D_x\rangle^{-1}[\gamma^0,\langle D_x\rangle^{-1}]\gamma^0,
\end{align*}
and we have $\mathcal E^{-m}\in OPS^{-m}$.
%\begin{align}
%\|\mathcal E(t,x,-i\nabla_x)\psi\|_{H^s_x} \lesssim \epsilon \|\psi\|_{H^s_x}, \quad \|\mathcal R(t,x,-i\nabla_x)\psi\|_{H^s_x} \lesssim 	\epsilon \|\psi\|_{H^s_x}+\|\psi\|_{H^{s-1}_x}.
%\end{align}
\end{lem}
\begin{proof}
 We compute the composition of the projection-type operators.
We consider the product
\begin{align*}
&	\langle D_x\rangle^{-1}(-i(\gamma^0\gamma^j\mathbf D_j+\Gamma_0)+\gamma^0)\langle D_x\rangle^{-1}(-i(\gamma^0\gamma^k\mathbf D_k+\Gamma_0)+\gamma^0) \\
& = \langle D_x\rangle^{-2}(-i(\gamma^0\gamma^j\mathbf D_j+\Gamma_0)+\gamma^0)(-i(\gamma^0\gamma^k\mathbf D_k+\Gamma_0)+\gamma^0) \\
& \qquad + \langle D_x\rangle^{-1}[(-i(\gamma^0\gamma^j\mathbf D_j+\Gamma_0)+\gamma^0), \langle D_x\rangle^{-1}] (-i(\gamma^0\gamma^k\mathbf D_k+\Gamma_0)+\gamma^0).
\end{align*}
We see that
\begin{align*}
	&(-i(\gamma^0\gamma^j\mathbf D_j+\Gamma_0)+\gamma^0)(-i(\gamma^0\gamma^k\mathbf D_k+\Gamma_0)+\gamma^0) \\
    & = \gamma^0\gamma^jD_j(\gamma^0\gamma^kD_k)+\gamma^0\gamma^jD_j(-i\gamma^0\gamma^k\Gamma_k)+\gamma^0\gamma^jD_j\gamma^0  -i\gamma^0\gamma^j\Gamma_j\gamma^0\gamma^kD_k-i\gamma^0\gamma^j\Gamma_j(-i\gamma^0\gamma^k\Gamma_k)\\& \qquad -i\gamma^0\gamma^j\Gamma_j\gamma^0 
	 +\gamma^0\gamma^0\gamma^k D_k-i\gamma^0\gamma^0\gamma^k\Gamma_k+\gamma^0\gamma^0 \\
	& = \gamma^0\gamma^jD_j(\gamma^0\gamma^k)D_k+\gamma^0\gamma^j\gamma^0\gamma^kD_jD_k  -i\gamma^0\gamma^jD_j(\gamma^0\gamma^k)\Gamma_k-i\gamma^0\gamma^j\gamma^0\gamma^kD_j\Gamma_k \\
	& \qquad +\gamma^0\gamma^j\gamma^0D_j+\gamma^0\gamma^j(D_j\gamma^0)  -i\gamma^0\gamma^j\Gamma_j\gamma^0\gamma^kD_k-\gamma^0\gamma^j\Gamma_j\gamma^0\gamma^k\Gamma_k-i\gamma^0\gamma^j\Gamma_j\gamma^0 \\
	& \qquad +\gamma^kD_k-i\gamma^k\Gamma_k+I_{4\times4}.
\end{align*}
Since $\gamma^0\gamma^j\gamma^0 = -\gamma^0\gamma^0\gamma^j = -\gamma^j$, we have $\gamma^0\gamma^j\gamma^0D_j+\gamma^kD_k=0$. Since $g^{0j}=0$, we have $\gamma^0\gamma^j = -\gamma^j\gamma^0$ and hence 
\begin{align*}
\gamma^0\gamma^j\gamma^0\gamma^kD_jD_k & = -(\gamma^0)^2\gamma^j\gamma^kD_jD_k = -\gamma^j\gamma^kD_jD_k \\
	& = -\frac12 (\gamma^j\gamma^kD_jD_k+\gamma^k\gamma^jD_kD_j) \\
	& = -g^{jk}D_jD_k = -(g^{jk}-\delta^{jk})D_jD_k-\delta^{jk}D_jD_k,
\end{align*}
which yields the operator $\Pi_\pm$ with the error term $\mathcal E^0$, as combined with $I_{4\times4}$ above the computations of the composition.

Now we compute the commutator
\begin{align*}
	& [(-i(\gamma^0\gamma^j\mathbf D_j+\Gamma_0)+\gamma^0), \langle D_x\rangle^{-1}] (-i(\gamma^0\gamma^k\mathbf D_k+\Gamma_0)+\gamma^0) \\
	& = [-i(\gamma^0\gamma^j\partial_j-\gamma^0\gamma^j\Gamma_j+\Gamma_0)+\gamma^0,\langle D_x\rangle^{-1}](-i\gamma^0\gamma^k\partial_k+i\gamma^0\gamma^k\Gamma_k-i\Gamma_0+\gamma^0) \\
	& = [\gamma^0\gamma^jD_j,\langle D_x\rangle^{-1}](\gamma^0\gamma^kD_k+i\gamma^0\gamma^k\Gamma_k-i\Gamma_0+\gamma^0) \\
	& \qquad + [i\gamma^0\gamma^j\Gamma_j-i\Gamma_0+\gamma^0,\langle D_x\rangle^{-1}](\gamma^0\gamma^kD_k+i\gamma^0\gamma^k\Gamma_k-i\Gamma_0+\gamma^0) \\
	& = [\gamma^0\gamma^jD_j,\langle D_x\rangle^{-1}]\gamma^0\gamma^kD_k+[\gamma^0\gamma^jD_j,\langle D_x\rangle^{-1}](i\gamma^0\gamma^k\Gamma_k-i\Gamma_0+\gamma^0)\\
	& \qquad + [i\gamma^0\gamma^j\Gamma_j-i\Gamma_0+\gamma^0,\langle D_x\rangle^{-1}](\gamma^0\gamma^kD_k+i\gamma^0\gamma^k\Gamma_k-i\Gamma_0+\gamma^0).
\end{align*}
We observe that
\begin{align*}
	[\gamma^0\gamma^jD_j,\langle D_x\rangle^{-1}]\gamma^0\gamma^kD_k & = \gamma^0\gamma^jD_j\langle D_x\rangle^{-1}\gamma^0\gamma^kD_k-\langle D_x\rangle^{-1}\gamma^0\gamma^jD_j\gamma^0\gamma^kD_k \\
	& = [\gamma^0\gamma^j,\langle D_x\rangle^{-1}]D_j(\gamma^0\gamma^k)D_k+[\gamma^0\gamma^j,\langle D_x\rangle^{-1}]\gamma^0\gamma^kD_jD_k
\end{align*}
and
\begin{align*}
	[\gamma^0\gamma^jD_j,\langle D_x\rangle^{-1}]\gamma^0 = [\gamma^0\gamma^j,\langle D_x\rangle^{-1}](D_j\gamma^0)+[\gamma^0\gamma^j,\langle D_x\rangle^{-1}]\gamma^0D_j,
\end{align*}
which completes the proof.
\end{proof}
Now we show that the study of the Dirac equation \eqref{eq-dirac-cov} can be reduced to the study of the half-Klein-Gordon equation using Lemma \ref{lem-dirac-proj}.
 We have seen that the homogeneous Dirac equation can be rewritten as
\begin{align*}
	\left(D_t+\langle D_x\rangle_M\left( \Pi_+^M(t,x,D_x)-\Pi_-^M(t,x,D_x) \right) \right)\psi = 0.
\end{align*}
Now we apply the operator $\Pi_+^M(t,x,D_x)$ to get
\begin{align*}
&	\Pi^M_+(t,x,D_x)\left(D_t+\langle D_x\rangle_M\left( \Pi_+^M(t,x,D_x)-\Pi_-^M(t,x,D_x) \right) \right)\psi \\
& = \left (D_t\Pi^M_++\langle D_x\rangle_M\Pi^M_+ (\Pi^M_+-\Pi^M_-) \right) \psi  +[\Pi^M_+,D_t]\psi + [\Pi_+^M,\langle D_x\rangle_M](\Pi_+^M-\Pi_-^M)\psi \\
& = (D_t+\langle D_x\rangle_M)\Pi_+^M\psi + 2\langle D_x\rangle\mathcal E\psi+[\Pi^M_+,D_t]\psi + [\Pi_+^M,\langle D_x\rangle_M](\Pi_+^M-\Pi_-^M)\psi .
\end{align*}
We apply the operator $\Pi^M_+$ once again. We note that
\begin{align*}
	\Pi_+^M(D_t+\langle D_x\rangle_M)\Pi_+^M\psi & = (D_t+\langle D_x\rangle)\Pi_+^M\Pi_+^M\psi + [\Pi_+^M,D_t+\langle D_x\rangle]\Pi_+^M\psi \\
	& = (D_t+\langle D_x\rangle)(\Pi_+^M+\mathcal E)\psi + [\Pi_+^M,D_t+\langle D_x\rangle]\Pi_+^M\psi \\
	& = (D_t+\langle D_x\rangle)\Pi_+^M\psi + (D_t+\langle D_x\rangle)\mathcal E\psi + [\Pi_+^M,D_t+\langle D_x\rangle]\Pi_+^M\psi,
\end{align*}
and
\begin{align*}
	\Pi_+^M\langle D_x\rangle_M\mathcal E\psi & = \langle D_x\rangle_M\Pi_+^M\mathcal E\psi+[\Pi_+^M,\langle D_x\rangle_M]\mathcal E\psi \\
	& = \langle D_x\rangle_M\mathcal E\Pi_+^M\psi + [\Pi_+^M,\langle D_x\rangle_M]\mathcal E\psi + \langle D_x\rangle_M[\Pi_+^M,\mathcal E]\psi ,
\end{align*}
which gives
\begin{align}\label{dirac-decom-pp}
\begin{aligned}
 &	(D_t+\langle D_x\rangle_M)\Pi_+^M\psi + (D_t+\langle D_x\rangle_M)\mathcal E\psi + 2\langle D_x\rangle_M\mathcal E\Pi_+^M\psi \\
 & \qquad +[\Pi_+^M,D_t+\langle D_x\rangle_M]\Pi_+^M\psi + 2[\Pi_+^M,\langle D_x\rangle_M]\mathcal E\psi + 2\langle D_x\rangle_M[\Pi_+^M,\mathcal E]\psi  \\
 & \qquad\qquad + \Pi_+^M[\Pi_+^M,D_t]\psi +\Pi_+^M [\Pi_+^M,\langle D_x\rangle_M](\Pi_+^M-\Pi_-^M)\psi = 0.
 \end{aligned}
\end{align}
On the other hand, applying the operator $\Pi_-^M(t,x,D_x)$ yields
\begin{align*}
	&	\Pi^M_-(t,x,D_x)\left(D_t+\langle D_x\rangle_M\left( \Pi_+^M(t,x,D_x)-\Pi_-^M(t,x,D_x) \right) \right)\psi \\
	& = (D_t-\langle D_x\rangle_M)\Pi_-^M\psi -2\langle D_x\rangle\mathcal E\psi+[\Pi^M_-,D_t]\psi + [\Pi_-^M,\langle D_x\rangle_M](\Pi_+^M-\Pi_-^M)\psi.
\end{align*}
We apply the operator $\Pi_-^M$ once again. We note that
\begin{align*}
	\Pi_-^M (D_t-\langle D_x\rangle_M)\Pi_-^M\psi & = (D_t-\langle D_x\rangle_M)\Pi_-^M\Pi_-^M\psi + [\Pi_-^M,D_t-\langle D_x\rangle_M]\Pi_-^M\psi \\
	& = (D_t-\langle D_x\rangle_M)\Pi_-^M\psi + (D_t-\langle D_x\rangle_M)\mathcal E\psi + [\Pi_-^M,D_t-\langle D_x\rangle_M]\Pi_-^M\psi .
\end{align*}
Then we have
\begin{align}\label{dirac-decom-mm}
\begin{aligned}
	& (D_t-\langle D_x\rangle_M)\Pi_-^M\psi +(D_t-\langle D_x\rangle_M)\mathcal E\psi -2\langle D_x\rangle_M\mathcal E\Pi_-^M\psi \\
	& \qquad +[\Pi_-^M,D_t-\langle D_x\rangle_M]\Pi_-^M\psi -2[\Pi_-^M,\langle D_x\rangle_M]\mathcal E\psi -2\langle D_x\rangle_M[\Pi_-^M,\mathcal E]\psi \\
	& \qquad\qquad + \Pi_-^M[\Pi_-^M,D_t]\psi + \Pi_-^M[\Pi_-^M,\langle D_x\rangle_M](\Pi_+^M-\Pi_-^M)\psi = 0.
	\end{aligned}
\end{align}
From now on we focus on the leading terms of \eqref{dirac-decom-pp} and \eqref{dirac-decom-mm}, which contains essentially first order derivatives in the pseudo-differential sense:
\begin{align*}\left\{
\begin{array}{l}
(D_t+\langle D_x\rangle_M)\Pi_+^M\psi +(D_t+\langle D_x\rangle_M)\mathcal E\psi + 2\langle D_x\rangle_M\mathcal E\Pi_+^M\psi = 0, \\
(D_t-\langle D_x\rangle_M)\Pi_-^M\psi +(D_t-\langle D_x\rangle_M)\mathcal E\psi + 2\langle D_x\rangle_M\mathcal E\Pi_-^M\psi = 0.
\end{array}\right.
\end{align*}
Note that all of the pseudodifferential operators containing commutators in \eqref{dirac-decom-pp} and \eqref{dirac-decom-mm} turn out to be of the class $OPS^{0}$ or of the class $OPS^{-1}$ and hence can be absorbed in the third terms of each equations above.
By summing it up and using the identity $\psi = \Pi_+^M\psi +\Pi_-^M\psi$ one can recover the homogeneous Dirac equation as follows:
\begin{align*}
	(D_t+\langle D_x\rangle_M)(1+2\mathcal E)\Pi_+^M\psi  + (D_t-\langle D_x\rangle_M)(1+2\mathcal E)\Pi_-^M\psi = 0.
\end{align*}
Now we simply put $2\mathcal E\rightarrow\mathcal E$. Hence we deduce that the covariant Dirac equation can be reformulated as the system of the linear equation for $(\Pi^M_+\psi,\Pi^M_-\psi)$:
\begin{align*}\left\{
	\begin{array}{l}
		(D_t+\langle D_x\rangle_M)(1+\mathcal E)\Pi_+^M\psi = 0, \\
		(D_t-\langle D_x\rangle_M)(1+\mathcal E)\Pi_-^M\psi = 0.
	\end{array}\right.
\end{align*}
We can also rewrite the homogeneous equation using the commutator:
\begin{align*}\left\{
	\begin{array}{l}
		((1+\mathcal E)D_t+(1+\mathcal E)\langle D_x\rangle_M + [D_t+\langle D_x\rangle_M,\mathcal E] )\Pi_+^M\psi = 0, \\
		((1+\mathcal E)D_t-(1+\mathcal E)\langle D_x\rangle_M + [D_t-\langle D_x\rangle_M,\mathcal E] )\Pi_-^M\psi = 0.
	\end{array}\right.
\end{align*}
We define the inverse of $1+\mathcal E$ as
\begin{align*}
	(1+\mathcal E)^{-1} = 1+\sum_{N=1}^\infty (-\mathcal E)^{N},
\end{align*}
which is convergent, since the pseudodifferential operator $\mathcal E$ has the norm $\|\mathcal E\|_{L^2\rightarrow L^2}\lesssim\epsilon\ll1$, provided that the metric satisfy \eqref{dirac-asymp-flat1}. Then we have 
\begin{align*}\left\{
	\begin{array}{l}
		(D_t+\langle D_x\rangle_M + (1+\mathcal E)^{-1}[D_t+\langle D_x\rangle_M,\mathcal E] )\Pi_+^M\psi = 0, \\
		(D_t-\langle D_x\rangle_M + (1+\mathcal E)^{-1}[D_t-\langle D_x\rangle_M,\mathcal E] )\Pi_-^M\psi = 0.
	\end{array}\right.
\end{align*}
Since $(1+\mathcal E)^{-1}$ is a bounded operator, $[D_t,\mathcal E]$ is an element of $OPS^0$ and $[\langle D_x\rangle,\mathcal E]$ is an element of $OPS^1$, with possibly lower order terms. Again, we do not compute the Poisson bracket here, and we consider the commutator merely as a product of the operators.
Define 
\begin{align*}
    A'(t,x,D_x) = (1+\mathcal E)^{-1}[D_t\pm\langle D_x\rangle, \mathcal E]\mp (1+\mathcal E)^{-1}[\langle D_x\rangle,\mathcal E^0].
\end{align*}
Then
\begin{align*}
   D_t \pm\langle D_x\rangle+(1+\mathcal E)^{-1}[D_t\pm\langle D_x\rangle,\mathcal E] =D_t \pm\langle D_x\rangle +(1+\mathcal E)^{-1}[\pm\langle D_x\rangle,\mathcal E^0]+A'(t,x,D_x).
\end{align*}
The following proposition ensures that the operator $A'$ is the lower-order term satisfying the error-type estimates \eqref{est-lower-error}. See also the estimate (17) of \cite{tataru4}.
We define 
\begin{align*}
    \|u\|_{X_k} = 2^k\|u\|_{L^2(A_{<-k})}+2^{\frac k2}\sup_{j\ge -k}\| |x|^{-\frac12}u\|_{L^2(A_j)},
\end{align*}
where $A_j = \mathbb R\times\{ 2^{j-1}<|x|<2^{j+1}\}$, and $A_{<j}=\mathbb R\times\{ |x|<2^j \}$. We define the function space
\begin{align*}
    \|u\|_{X^s}^2 = \sum_{k=-\infty}^\infty \langle2^{k}\rangle^{2s}\|S_k u\|_{X_k}^2
\end{align*}
and $Y^s=(X^{-s})'$, i.e.\ the dual of $X^{-s}$. We have
\begin{align*}
    \|u\|_{Y^s}^2 = \sum_{k=-\infty}^\infty \langle2^{k}\rangle^{2s}\|S_k u\|_{X'_k}^2,
\end{align*}
with \begin{align*}
    \|u\|_{X'_k} = 2^{-k}\|u\|_{L^2(A_{<-k})}+2^{-\frac k2}\sum_{j\ge -k}\| |x|^{\frac12}u\|_{L^2(A_j)}.
\end{align*}
\begin{prop}
	For the lower order terms $A'(t,x,D_x)$ of the Dirac operator in terms of the half-Klein-Gordon operators, we have the following error term estimates:
	\begin{align*}
		\| A' u\|_{Y^0} \lesssim \epsilon \|u\|_{X^0}.
	\end{align*}
\end{prop}
\begin{proof}
  %Since all the terms are  $L^2$-norm, we neglect the operator $(1+\mathcal E)^{-1}$.
  %We note that each dyadic piece of the coefficients of the lower order terms has the upper bound $\lesssim 2^{-k}\epsilon (2^{-k}+|x|)^{-1} $ on frequency $2^k$. See also (27) of \cite{tataru4}.
%We refer the readers to the proof of (17) of \cite{tataru4}.
Here we give the proof briefly.
    For the terms $[\langle D_x\rangle, \mathcal E^{-1}+\mathcal E^{-2}+\mathcal E^{-3}]$, we need to consider carefully the commutator such as $[\gamma^0\gamma^j,\langle D_x\rangle^{-1}]$. For example, when considering the estimates of the term \[
  [\langle D_x\rangle, \langle D_x\rangle^{-1}[\gamma^0\gamma^j,\langle D_x\rangle^{-1}]\gamma^0\gamma^kD_jD_k] ,\]
  we need to control the $Y$-norm of the term $[\gamma^0\gamma^j,\langle D_x\rangle^{-1}]\gamma^0\gamma^kD_jD_k]u$. We put $[ \gamma^0\gamma^a, \langle D_x\rangle^{-1}] \gamma^0\gamma^b := B^{ab} $ for simplicity and apply a dyadic decomposition to get
  \begin{align}
      \begin{aligned}
          B^{ab}D_aD_b u &= \sum_{ k > j+4 } (S_k B^{ab}) D_a D_b S_j u + \sum_{|k-j|\le4} (S_k B^{ab}) D_a D_b S_j u \\
          & \qquad + \sum_{ k< j-4} (S_k B^{ab}) D_a D_b S_j u .
      \end{aligned}
  \end{align}

  Using the frequency localisation $|\xi|\approx 2^k$ and the Poisson bracket it is easy to see that $\|[\gamma^0\gamma^j,\langle D_x\rangle^{-1}\|_{L^2\rightarrow L^2}\lesssim \epsilon 2^{-2k}(2^{-k}+|x|)^{-2}$. (Again, we only need at most fourth-order derivatives of the Poisson bracket to determine the norm.) We also refer to (27) of \cite{tataru4}. Then the remaining task is a repetition of the proof of (17) of \cite{tataru4}. Indeed, for $k>j+4$, the output frequency is $2^k$ and we measure the term in $X'_k$. We also note that the derivative $D_aD_b$ gives a factor $\lesssim 2^{2k}$. Then we write
  \begin{align*}
      \| (S_k B^{ab})D_aD_b S_ju\|_{X'_k} & \lesssim \| (2^{-k}+ |x|)^{-2} S_ju\|_{X'_k} \\
      & \lesssim \| \chi_{> -j} (2^{-j}+|x|)^{-2} S_ju\|_{X'_k} + \| \chi_{\le j} (2^{-k}+|x|)^{-2}S_ju\|_{X'_k} \\
      & \lesssim 2^{\frac{j-k}{2}} \| S_ju\|_{X_j} + \| \chi_{\le j} (2^{-k}+|x|)^{-2} \|_{X'_k} \|S_ju\|_{L^\infty(A_{<j})} \\
      & \lesssim \| S_ju\|_{X_j} ( 2^{\frac{j-k}{2}} + 2^{\frac j2} \| \chi_{\le j} (2^{-k}+|x|)^{-2} \|_{X'_k}) \\
      & \lesssim |k-j| 2^{\frac{j-k}{2}} \| S_ju\|_{X_j}.
  \end{align*}
  Other cases $|k-j|\le 4$ and $k < j-4$ also follow a similar way as \cite{tataru4}.

The estimate of the terms $[D_t,\mathcal E]$ also follows the argument by \cite{tataru4}. Indeed, we can give an explicit computation for the commutator $[ D_t, \mathcal E ] $. Here we only compute the commutator $[ D_t, \mathcal E^0 ] $, since the remaining terms $\mathcal E^{-m}$, $m=1,2,3$ are lower-order terms.
  We have
  \begin{align}
      [ D_t, \mathcal E^0 ] = \langle D_x\rangle^{-2} (D_t g^{jk}) D_j D_k.
  \end{align}
  Then we use the dyadic decomposition for the coefficients of $[ D_t, \mathcal E^0 ] $ in the frequency space.
  The asymptotic flatness assumption \eqref{dirac-asymp-flat1} ensures that each dyadic piece of the coefficients for $[ D_t, \mathcal E^0 ] $ satisfies the bound:
  $\lesssim 2^{-k}\epsilon (2^{-k}+|x|)^{-1} $. Then the remaining task is a repetition of the above computation.

\end{proof}
%Using Lemma \ref{lem-dirac-kg} it is obvious that the weak asymptotic flatness assumptions \eqref{dirac-asymp-flat1}, \eqref{dirac-asymp-flat2} on the metric imply the assumptions \eqref{as1-symbol-reg}, \eqref{as2-symbol-small}, and \eqref{as3-symbol-decay}.

In what follows, we adapt the argument by \cite{metataru,tataru4} to prove several properties of a parametrix for the half-Klein-Gordon operator. However, in \cite{metataru,tataru4} all the operators are given by the Weyl operator while we introduce the operators $\Pi_\pm^M$ in terms of the standard calculus. Such an inconsistency makes no harm, since we can rewrite our operators given by the standard calculus as the Weyl calculus. Indeed, we may write the operator $\Pi_\pm^M(x,\xi)=a_0(x,\xi)+a_{-1}(x,\xi)$, where $a_0\in S^0$ and $a_{-1}\in S^{-1}$. By Theorem 18.5.10 of \cite{hoermander} the operators $\Pi_\pm^M(x,D_x)$ can be rewritten by the Weyl operator $b^w(x,D_x)=b^w_0(x,D_x)+b^w_{-1}(x,D_x)$, where $a_0(x,\xi)=b_0(x,\xi)$ and $b_{-1}(x,\xi)=a_{-1}(x,\xi)+\partial_{x,\xi}^2 a_0(x,\xi)$.  
%\end{rem}
%%%%%%%%%%%%%%%%%%%%%%%%%%%%%%%%%%%%%%%%%%%%%%%%%%%%%
\subsection{Nonlinear Dirac equations on a curved space-time}
We are concerned with initial value problems for the Dirac equations with a cubic-type nonlinearity, given by
\begin{equation}\label{eq-dirac-cubic}
\begin{split}
	(-i\gamma^\mu\mathbf D_\mu+M)\psi & = (\psi^\dagger\psi)\gamma^0\psi \\
	\psi|_{t=0} & = \psi_0,
\end{split}
\end{equation}
where $\psi^\dagger$ is the complex conjugate transpose of the spinor $\psi$. We apply the operators $\Pi_\pm^M$ introduced in the previous section and reformulate the systems \eqref{eq-dirac-cubic}:
\begin{align}\label{eq-dirac-cubic-proj}
\left\{
\begin{array}{l}
	(D_t+\langle D_x\rangle_M+(1+\mathcal E)^{-1}[D_t+\langle D_x\rangle_M,\mathcal E])\Pi^M_+\psi = \Pi_+^M\Pi^M_+[(\psi^\dagger\psi)\psi], \\
	(D_t-\langle D_x\rangle_M+(1+\mathcal E)^{-1}[D_t-\langle D_x\rangle_M,\mathcal E])\Pi^M_-\psi = \Pi_-^M\Pi^M_-[(\psi^\dagger\psi)\psi], \\
	(\Pi^M_+\psi,\Pi^M_-\psi)|_{t=0} = (\Pi_+^M\psi_{0},\Pi_-^M\psi_{0}).
\end{array}\right.
\end{align}
We assume that the elliptic part of the metric $[g_{ij}]_{i,j=1,\cdots,3}$ satisfies the weak asymptotic flatness assumptions: \eqref{dirac-asymp-flat1}, \eqref{dirac-asymp-flat2}, and \eqref{dirac-asymp-regular}.
%\begin{align}\label{regular-g}
%\begin{aligned}
%\sum_{j\in\mathbb Z}\sup_{(t,x)\in A_j}\left( |x|^{|\alpha|}|\partial_{t,x}^\alpha(g^{ij}(t,x)-\delta^{ij})| \right) \le \epsilon, \ |\alpha|\le3, \ \alpha_0\le1, \\
%\sum_{j\in\mathbb Z}\sup_{(t,x)\in A_j}|x|^{|\alpha|}|\partial_{t,x}^\alpha g^{ij}(t,x)| \le C_\alpha, \ 3\le|\alpha|\le5, \ \alpha_0\le1, \\
%C_{\alpha,j}\le \epsilon_j2^{(\frac{|\alpha|}{2}-1)j},
%\end{aligned}
%\end{align}
%where $\alpha=(\alpha_0,\alpha_1,\cdots,\alpha_d)\in\mathbb N\times\mathbb N^d$ and the set $A_j$ is given by
%\begin{align*}
%	A_j = \mathbb R\times\{x\in\mathbb R^3: 2^{j-1}\le |x|\le 2^{j+1} \}.
%\end{align*}
%where the Minkowski metric is denoted by $\mathbf m=\textrm{diag}(-1,+1,+1,+1)$.
%We also impose an additional decay assumption on the derivatives of the metric so that the following inequalities hold: for $s>1$,
%\begin{align}\label{small-curv}
%\|\mathbf R\|_{L^2_tH^{s+\frac12}_x} \le \epsilon.	
%\end{align}
Now Theorem \ref{thm-dirac-gwp} can be restated in terms of $(\Pi^M_+\psi,\Pi^M_-\psi)$ as follows:
\begin{thm}\label{thm-dirac-reform}
Let $s>1$ and $M>0$. Suppose that the metric $ g$ satisfies \eqref{dirac-asymp-flat1}, \eqref{dirac-asymp-flat2}, \eqref{dirac-asymp-regular} and $\epsilon>0$ is sufficiently small. Then, the Cauchy problem for the equations \eqref{eq-dirac-cubic-proj} on the curved spacetime $(\mathbb R^{1+3}, g)$ is globally well-posed for small initial data: $\|\Pi_+^M\psi_{0}\|_{H^s_x(\mathbb R^3)}+\|\Pi_-^M\psi_{0}\|_{H^s_x(\mathbb R^3)}\ll1$. Furthermore, these solutions scatter to free solutions for $t\rightarrow \pm\infty$.
\end{thm}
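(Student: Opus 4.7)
The plan is to run a small-data fixed-point argument in a Strichartz-type space following Machihara--Nakamura--Nakanishi--Ozawa, applied to the reformulated system \eqref{eq-dirac-cubic-proj}. The preparatory observation is that both operators $D_t \pm \langle D_x\rangle_M + \mathfrak E^\pm$ fall under the scope of Theorem \ref{thm-strichartz}: up to lower-order, negligible terms they take the form $D_t + A^\pm(t,x,D_x)$ with symbol $A^\pm(t,x,\xi) = \pm\sqrt{M^2+|\xi|^2} + e^\pm(t,x,\xi)$, where by Lemma \ref{lem-dirac-kg} the principal part of $e^\pm$ is $\sum_{|\alpha|=1,2,3}\partial^\alpha_{t,x}g_{ij}(t,x)\,a_\alpha(\xi)$ with $a_\alpha \in S^0$. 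The weak asymptotic flatness \eqref{dirac-asymp-flat1}--\eqref{dirac-asymp-flat2} together with the interior regularity \eqref{dirac-asymp-regular} then translate, after the Littlewood--Paley scaling described in Section \ref{sec:intro}, directly into the three symbol bounds \eqref{as1-symbol-reg}--\eqref{as3-symbol-decay}, so that Theorem \ref{thm-strichartz} applies to each of $u^\pm := \Pi^M_\pm \psi$.

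With this in hand, the central linear estimate is obtained by specializing Theorem \ref{thm-strichartz} in $d=3$ to the time-endpoint admissible pair $(p_1,q_1) = (2,\, 2+4/\theta)$ for small $\theta > 0$, paired with the dual energy tuple $(p_2,q_2) = (\infty,2)$ for which $\sigma_2 = 0$. Composing with the Sobolev embedding $W^{3/q_1 + \delta,\,q_1}(\mathbb R^3) \hookrightarrow L^\infty(\mathbb R^3)$, a direct computation of the exponents shows that for any $s > 1$ one can choose $\theta,\delta > 0$ small enough to obtain
\begin{equation*}
\|u^\pm\|_{L^\infty_t H^s_x} + \|u^\pm\|_{L^2_t L^\infty_x} \lesssim \|\Pi^M_\pm \psi_0\|_{H^s_x} + \|\langle D_x\rangle^s F^\pm\|_{L^1_t L^2_x},
\end{equation*}
where $F^\pm := \Pi^M_\pm \Pi^M_\pm[(\psi^\dagger \psi)\psi]$. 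The obstruction encoded in the forbidden Keel--Tao endpoint $(p,q)=(2,\infty)$ at $\theta = 0$ is precisely what forces the threshold $s > 1$ and constitutes the main analytic hurdle.

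Setting $\psi = u^+ + u^-$ and introducing the resolution space
\begin{equation*}
X := \{u : \|u\|_X := \|u\|_{L^\infty_t H^s_x} + \|u\|_{L^2_t L^\infty_x} < \infty\},
\end{equation*}
the $L^2$-boundedness of the zero-order operators $\Pi^M_\pm$ (Proposition \ref{prop-bdd-pdo}) combined with a Kato--Ponce-type fractional Leibniz estimate yields
\begin{equation*}
\|\langle D_x\rangle^s F^\pm\|_{L^1_t L^2_x} \lesssim \|\psi\|_{L^2_t L^\infty_x}^2 \|\psi\|_{L^\infty_t H^s_x} \lesssim \|\psi\|_X^3,
\end{equation*}
together with the analogous trilinear difference bound. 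A Picard iteration in a small ball of $X$ then produces a unique global solution with $\|\psi\|_X \lesssim \|\Pi^M_+ \psi_0\|_{H^s_x} + \|\Pi^M_- \psi_0\|_{H^s_x}$, provided the data is sufficiently small. Scattering follows from a standard Cauchy argument: applying the Strichartz bound to the Duhamel tail on $[T,\infty)$ shows that $u^\pm$ approaches, in $H^s_x$ as $t\to\pm\infty$, a solution of the homogeneous linear equation $(D_t \pm \langle D_x\rangle_M + \mathfrak E^\pm)u = 0$.
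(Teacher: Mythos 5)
Your proposal matches the paper's proof essentially step for step: both reduce to the reformulated half-Klein-Gordon system, apply Theorem \ref{thm-strichartz} with a small $\theta>0$ at the $L^2_t$ endpoint paired with the dual energy tuple $(\infty,2)$, pass to $L^2_tL^\infty_x$ via Sobolev embedding (which is what forces $s>1$), close the cubic estimate with a fractional Leibniz/product bound and the $OPS^0$-boundedness of $\Pi^M_\pm$, and conclude global existence by a small-ball contraction with scattering from the Duhamel tail. The only cosmetic differences are that you invoke Kato--Ponce explicitly and phrase scattering in terms of the homogeneous perturbed linear flow, but the underlying argument and exponent arithmetic agree with the paper's.
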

%We note that Lemma \ref{lem-dirac-kg} with \eqref{dirac-asymp-flat1}, \eqref{dirac-asymp-flat2}, \eqref{dirac-asymp-regular} imply the assumptions \eqref{as1-symbol-reg}, \eqref{as2-symbol-small}, and \eqref{as3-symbol-decay}.
Due to the cubic nonlinear feature of the Cauchy problems \eqref{eq-dirac-cubic-proj}, it suffices to investigate the $L^2_tL^\infty_x$-estimates. Therefore in the rest of this paper, we focus on the endpoint Strichartz estimates for the half-Klein-Gordon operators $D_t+\mathrm{Op}(\sqrt{1+g^{ij}\xi^i\xi^j})$.
%A crucial part of the proof is to obtain the $L^2_tL^\infty_x$-endpoint Strichartz estimates for the half-wave Klein-Gordon operators $-i\partial_t\pm\langle\nabla_x\rangle_M+\mathfrak E^\pm$. Thus our main task is to study the Strichartz estimates of the half-wave Klein-Gordon operator with the presence of a small perturbation. In fact, we establish the endpoint Strichartz estimates for the Klein-Gordon equations with variable coefficients.
%%%%%%%%%%%%%%%%%%%%%%%%%%%%%%%%%%%%%%%%%%%%%%%%%%
\section{Half-Klein-Gordon operators with a small perturbation}\label{sec:local-energy-estimate}
 We study the Cauchy problems for the half-Klein-Gordon equations with variable coefficients:
\begin{align}\label{main-eq-half-kg}
\begin{aligned}
\left(D_t\pm \mathrm{Op}\sqrt{1+g^{ij}(t,x)\xi_i\xi_j}\right)u&=f, \quad (t,x)\in\mathbb R^{1+d}, \ d\ge3, \\
u|_{t=0} & = u_0	,
\end{aligned}
\end{align}
where the metic satisfies the regularity conditions \eqref{dirac-asymp-flat1} and \eqref{dirac-asymp-flat2}. In what follows, we put $A^\pm(t,x,D_x) = \mathrm{Op}\sqrt{1+g^{ij}\xi_i\xi_j} $ for simplicity.

\subsection{Mollification of the half-Klein-Gordon operator}
We mollify the perturbed symbol:
\begin{align}
 q(t,x,\xi) :=   \sqrt{1+ g^{ij}(t,x)\xi_i\xi_j } - \sqrt{1 +|\xi|^2 }.
\end{align}
To do this, we define
\begin{align}
    q_{(k)}(t,x,\xi) := \sum_{l<k-4}S_{<l}\chi_{<k-2l} S_l q(t,x,\xi).
\end{align}
Then we define the mollified half-Klein-Gordon operator:
\begin{align}
    A^\pm_{(k)}(t,x,D_x) := \sqrt{1+|D_x|^2}+ \mathrm{Op}( q_{(k)}(t,x,\xi)).
\end{align}
%An easy computation shows that %\begin{align*}
 %   \sqrt{1+g^{ij}\xi_i\xi_j}-\sqrt{1+|\xi|^2   & = \frac{(g^{ij}-\delta^{ij})\xi_i\xi_j}{ \sqrt{1+g^{ij}\xi_i\xi_j}+\sqrt{1+|\xi|^2}},
%\end{align*}
%We mollify the fraction:
%\begin{align}
 %  b_{(k)}(t,x,\xi) =  \sum_{l<k-4}S_{<l}\chi_{<k-2l} S_l \frac{(g^{ij}-\delta^{ij}) \xi_i\xi_j  }{ \sqrt{1+g^{ij}\xi_i\xi_j}+\sqrt{1+|\xi|^2} }
%\end{align}
%We let
%$\mathrm{Op}( b_k(t,x,\xi)) = B(t,x,D_x)$ and define the mollified half-KG:
%\begin{align}
 %   A^\pm_{(k)}(t,x,D_x)  = \pm\sqrt{1+|D_x|^2}\pm B_{(k)}(t,x,D_x)
%\end{align}

%\textbf{ The following inequalities should be changed! }

This mollification gives the following: see also (32) of \cite{metataru}
\begin{align}\label{bdd-mollification}
    \begin{aligned}
         |\partial_x^\alpha \partial_\xi^\beta q_{(k)}(t,x,\xi)| \le c_\alpha \epsilon_k(|x|) 2^{|\alpha|k}(1+2^k|x|)^{-|\alpha|}\langle \xi\rangle^{1-|\beta|}, \quad |\alpha|\le2, \\
          |\partial_x^\alpha \partial_\xi^\beta q_{(k)}(t,x,\xi)| \le c_\alpha\epsilon_k(|x|)2^{|\alpha|k}(1+2^k|x|)^{-1-\frac{|\alpha|}{2}}\langle\xi\rangle^{1-|\beta|} , \quad |\alpha|\ge2.
         %|\partial^\alpha(g^{ij}_{(k)}-\delta^{ij})| \le c_\alpha \epsilon_k(|x|)2^{|\alpha|k}(1+2^k|x|)^{-|\alpha|}, \quad |\alpha|\le2, \\
        %|\partial^\alpha g^{ij}_{(k)}| \le c_\alpha\epsilon_k(|x|)2^{|\alpha|k}(1+2^k|x|)^{-1-\frac{|\alpha|}{2}}, \quad |\alpha|\ge2.
    \end{aligned}
\end{align}
%Indeed, by the chain rule, the derivative $\partial_x$ yields the derivative of the metric $\partial_x^\alpha g$

We also define a global mollified operator
\begin{align*}
    \widetilde{A}^\pm = \sum_{k=-\infty}^\infty A^\pm_{(k)}S_k.
\end{align*}
\begin{prop}\label{prop-mollified-operator}
The following estimates hold:
\begin{align*}
    \|(A^\pm-\widetilde{A}^\pm)u\|_{Y^s} \lesssim \epsilon \|u\|_{X^s}, \\
    \|(\widetilde{A}^\pm-A^\pm_{(k)})S_lu\|_{X'_k} \lesssim \epsilon \|S_lu\|_{X_k}, \quad |l-k|\le2, \\
    \|[A^\pm_{(k)},S_k]u\|_{X'_k} \lesssim \epsilon \|u\|_{X_k}.
\end{align*}
\end{prop}
\begin{proof}
We put $\pm=+$ and $A^+ = A$ for simplicity and write
\begin{align}
    A- \widetilde{A}  = A_{\rm low}+ A_{\rm mid}+A_{\rm high},
\end{align}
where
\begin{align}
    \begin{aligned}
        A_{\rm low} := \sum_{k=-\infty}^{\infty} \left( \sum_{l<k-4} (S_{<l}\chi_{\ge k-2l})(S_l q) \right) S_k, \\
        A_{\rm mid} := \sum_{k=-\infty}^{\infty} \left( \sum_{l=k-4}^{l=k+4} (S_l q) \right) S_k, \\
         A_{\rm high} := \sum_{k=-\infty}^{\infty} \left( \sum_{l>k+4} (S_l q) \right) S_k.
    \end{aligned}
\end{align}
Then the remaining task readily follows by the argument of \cite{metataru}. We refer the readers to the proof of Proposition 1 of \cite{metataru}.
\end{proof}
%%%%%%%%%%%%%%%%%%%%%%%%%%%%%%
\subsection{Scaling argument}\label{sec:scaling}
%We apply a scaling argument to the variable coefficient half-Klein-Gordon equation
%\begin{align*}
 %  ( D_t\pm Op( \sqrt{ 1+g^{ij}(t,x)\xi_i\xi_j })u=0.
%\end{align*}
%The scaling $(t,x)=( \lambda^{-1}s,\lambda^{-1}y) $ and $\xi=\lambda\eta$, which gives $D_t=\lambda D_s$ and
%\begin{align*}
 %   \sqrt{ 1+g^{ij}(t,x)\xi_i\xi_j } = \lambda \sqrt{ \lambda^{-2} + g^{ij}( \lambda^{-1}s,\lambda^{-1}y)\eta_i\eta_j  },
%\end{align*}
%and hence we obtain
%\begin{align}
 %   D_t + Op( \sqrt{ 1+g^{ij}(t,x)\xi_i\xi_j } = \lambda \left( D_s+\sqrt{ \lambda^{-2} + g^{ij}( \lambda^{-1}s,\lambda^{-1}y)\eta_i\eta_j  }  \right) .
%\end{align}
%Thus a scaled function $u_\lambda$ becomes a solution to the following half-Klein-Gordon equation:
%\begin{align}
 %   \left( D_s+\sqrt{ \lambda^{-2} + g^{ij}( \lambda^{-1}s,\lambda^{-1}y)\eta_i\eta_j  }  \right) u_\lambda = 0.
%\end{align}
%Proposition \ref{prop-mollified-operator} allows us to transform the problem for the above equation to the problem for the mollified operator, up to an expense of an acceptable error:
%\begin{align}
 %   (D_t+ \widetilde{A}^\pm(t,x,D_x)) u = 0.
%\end{align}
By the definition of the mollified operator $\widetilde{A}$, we are able to concentrate on a dyadic piece of $\widetilde A$:
\begin{align}
    (D_t+ A^\pm_{(k)})S_k u = 0.
\end{align}
Now we rescale the frequency-localised equation. To do this, we set $s=\lambda t$, $y=\lambda x$, and $u_\lambda = S_k u(\lambda^{-1}s,\lambda^{-1}y)$. Then the above frequency-localised equation is transformed into 
\begin{align}
    \left( D_s +\mathcal A^\pm_{(k)}(s,y,D_y)  \right) u_\lambda =0,
\end{align}
where 
\begin{align}
    \mathcal A^\pm_{(k)}(s,y,D_y) := \pm  \mathrm{Op}_y( a_{(k)}(s,y,\eta)), 
\end{align}
and
\begin{align}
    a_{(k)}(s,y,\eta) := \sqrt{\lambda^{-2}+|\eta|^2}+q_{(k)}(\lambda^{-1}s,\lambda^{-1}y,\lambda\eta) .
\end{align}
Now we need to show that the functions $a_{(k)}(s,y,\eta)$ belong to a symbol class $S^1$ uniform in $k$ on $|\eta|\approx1$. Indeed, we note that
\begin{align*}
    \partial_y^\alpha q_{(k)}(\lambda^{-1}y) = \lambda^{-|\alpha|}(\partial_x^\alpha q_{(k)})(\lambda^{-1}y).
\end{align*}
Then combining this with
the bound \eqref{bdd-mollification} yields the following inequality:
\begin{align}
    | \partial_y^\alpha \partial_\eta^\beta  q_{(k)}(\lambda^{-1}y)| \lesssim \epsilon_k(\lambda^{-1}|y|) 2^{k|\alpha|} \lambda^{-|\alpha|} (1+2^k \lambda^{-1}|y|)^{-|\alpha|}.
\end{align}
Furthermore, if we put $\lambda=2^k$, then we obtain
\begin{align}
     | \partial_y^\alpha \partial_\eta^\beta  q_{(k)}(\lambda^{-1}y)| \lesssim  \epsilon_k(\lambda^{-1}|y|) (1+|y|)^{-|\alpha|}.
\end{align}
Therefore, we conclude that the rescaled operators belong to a fixed perturbative symbol class at the unit frequency. All frequency-localised estimates can be proved after rescaling at frequency $\approx1$, uniformly in $k$, and then summed over dyadic $k$. 

%%%%%%%%%%%%%%%%%%%%%%%%%%
%%%%%%%%%%%%%%%%%%%%%%%%%%%%
\subsection{Localised energy estimates}\label{subsec:loc_en}
In this section, we shall prove the localised energy estimates for the (half) Klein-Gordon operator.
%{\color{red} The proof should be revised!}
\begin{thm}[Localised energy estimates for the half Klein-Gordon operators]\label{thm-loc-en-est}
Consider the Cauchy problem for the inhomogeneous half-Klein-Gordon equation \eqref{main-eq-half-kg} and suppose that $\epsilon>0$ is sufficiently small. Then the half Klein-Gordon operator $D_t+A^\pm(t,x,D_x)$ satisfies the localised energy estimates globally in time in the following sense: For each initial data $u_0\in H^s_x$ and each inhomogeneous term $f\in L^1_tH^s_x+Y^s$, there exists an unique solution $u\in L^\infty_tH^s_x\cap X^s $ to \eqref{main-eq-half-kg} satisfying the estimates
	\begin{align}
	\|u\|_{L^\infty_tH^s_x\cap X^s} & \lesssim \|u_0\|_{H^s_x}+\|f\|_{L^1_tH^s_x+Y^s}.	
	\end{align}
\end{thm}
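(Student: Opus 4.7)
The plan is to combine Littlewood--Paley theory with the rescaling of Subsection~\ref{subsec:scaling} to reduce the problem to a unit-scale problem at each dyadic frequency, add a standard $L^\infty_t L^2_x$ energy identity to a positive commutator argument tailored to the $X_\lambda$ norm, and finally invoke duality to absorb the $X'_\lambda$ part of the forcing.

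First I would exploit the orthogonality built into the definitions of $X^s$ and $Y^s$ to reduce, for each $\lambda \geq 1$, to proving
\begin{equation*}
    \|u_\lambda\|_{L^\infty_t L^2_x} + \|u_\lambda\|_{X_\lambda} \lesssim \|P_\lambda u_0\|_{L^2_x} + \|P_\lambda f\|_{L^1_t L^2_x + X'_\lambda}
\end{equation*}
for $u_\lambda = P_\lambda u$, uniformly in $\lambda$, together with an analogous low-frequency estimate for $P_{\leq 1} u$. The commutators $[P_\lambda, A^\pm]$ become admissible $X'_\lambda$ source terms thanks to \eqref{as2-symbol-small}--\eqref{as3-symbol-decay}. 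The scaling $(t,x) \mapsto (\lambda^{-1}t, \lambda^{-1}x)$ of Subsection~\ref{subsec:scaling} then turns this into $(D_t + A^\pm_\lambda)u^\lambda = f^\lambda$ at unit frequency with $A^\pm_\lambda$ obeying \eqref{as1-symbol-reg}--\eqref{as3-symbol-decay} uniformly in $\lambda$, and the prefactor $\lambda^{1/2}$ in \eqref{def-X} is precisely calibrated so that the target inequality becomes $\lambda$-independent. Since $A^\pm_\lambda$ has a real principal symbol, its antisymmetric part is a bounded $OPS^0$ operator, so \eqref{est-calvail2} applied to $\frac{d}{dt}\|u^\lambda\|_{L^2_x}^2$ followed by Gr\"onwall yields the energy piece immediately.

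The main obstacle is the localised energy bound. I would employ a positive commutator with a radial multiplier of the form $q(x,\xi) = \chi(|x|)\,\frac{x\cdot\xi}{\langle\xi\rangle}$, where $\chi$ is a bounded monotone function chosen so that the Poisson bracket
\begin{equation*}
    \bigl\{\pm\sqrt{\lambda^{-2}+|\xi|^2},\, q\bigr\}(x,\xi) \;\gtrsim\; \frac{1}{1+|x|}
\end{equation*}
holds pointwise on the unit-frequency shell and reproduces the dyadic spatial weight built into $X_\lambda$. After quantising $q$ to an operator $Q$, the identity $\frac{d}{dt}\langle Q u^\lambda, u^\lambda\rangle = i\langle [A^\pm_\lambda, Q] u^\lambda, u^\lambda\rangle + 2\operatorname{Im}\langle Q u^\lambda, f^\lambda\rangle$ produces $\|u^\lambda\|_{X_1}^2$ from below once integrated in time. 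The perturbation $A^\pm_\lambda - \pm\sqrt{\lambda^{-2}+|\xi|^2}$ generates additional commutator contributions which, by the slowly varying sequence $\{\epsilon_j\}$ in \eqref{ep-property}, are dominated by $\epsilon\|u^\lambda\|_{X_1}^2$ and absorbed into the left-hand side. The region $|x|\leq 1$ is treated separately using \eqref{as1-symbol-reg} with a compactly modified $\chi$. The delicate step will be verifying that pointwise positivity of the Poisson bracket lifts to a lower bound on the operator $[A^\pm_\lambda, Q]$, controlling the $S^0$-remainders from the asymptotic expansion of the composition.

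For the $X'_\lambda$ part of the forcing I would use duality: $X'_\lambda$ is the $L^2_{t,x}$-dual of $X_\lambda$ and the backward-in-time estimate for the adjoint problem, which has the same structure because the principal symbol is real, combined with Duhamel's formula yields the bilinear bound $|\langle u^\lambda, g\rangle_{L^2_{t,x}}| \lesssim \|f^\lambda\|_{X'_\lambda}\|g\|_{X'_\lambda}$, which is equivalent to the $X_\lambda$ estimate with an $X'_\lambda$ source. Summing dyadically and adding the low-frequency contribution produces the full estimate; uniqueness follows from the same a priori bound applied to the difference of two solutions, and continuity in $t$ with values in $H^s_x$ is recovered by standard approximation.
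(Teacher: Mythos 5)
Your high-level strategy — Littlewood--Paley decomposition, rescaling to unit frequency, a positive commutator with a radial multiplier of the form $\chi(|x|)\,x\cdot\xi/\langle\xi\rangle$, and a duality step for the $X'_\lambda$ source — is the same approach the paper takes (which in turn delegates the key construction to Lemma~10 of \cite{tataru4}). The gap is concrete and lies in the two incompatible demands you place on $\chi$. On the unit frequency shell the relevant Poisson bracket is $\bigl\{\pm\sqrt{\lambda^{-2}+|\xi|^2},q\bigr\}\approx\chi(|x|)+|x|\chi'(|x|)=\tfrac{d}{ds}\bigl(s\chi(s)\bigr)\big|_{s=|x|}$. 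Requiring this to be $\gtrsim(1+|x|)^{-1}$ forces $|x|\chi(|x|)\gtrsim\log(1+|x|)$, hence unbounded. But $Q$ acts on unit-frequency data roughly as multiplication by $|x|\chi(|x|)$ composed with an $S^0$ operator, so $\||x|^{-1/2}Qu\|_{L^2(A_j)}\approx 2^j\chi(2^j)\,\||x|^{-1/2}u\|_{L^2(A_j)}$, and the boundedness $\|Qu\|_{X_\lambda}\lesssim\|u\|_{X_\lambda}$ you need in order to close the argument requires $|x|\chi(|x|)\lesssim 1$. You cannot satisfy both.

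The paper's (and Tataru's) resolution is exactly the missing ingredient: one introduces the auxiliary weighted spaces $X_{\lambda,\alpha}$ with
\begin{equation*}
\|u\|_{X_{\lambda,\alpha}}^2=\lambda\|u\|_{L^2(A_{<0})}^2+\lambda\sum_{j\ge0}\alpha_j\||x|^{-\frac12}u\|_{L^2(A_j)}^2
\end{equation*}
for a slowly varying sequence $\{\alpha_m\}$ with $\sum\alpha_m=1$, and chooses the multiplier function $\phi$ so that $\phi(s)+s\phi'(s)\approx(1+s)^{-1}\alpha(s)$ (summable) while $s\phi(s)$ stays bounded; then $\langle Cu,u\rangle\gtrsim\|u\|_{X_{\lambda,\alpha}}^2$ and $\|Qu\|_{X_{\lambda,\alpha}}\lesssim\|u\|_{X_{\lambda,\alpha}}$ hold simultaneously, uniformly in $\alpha$. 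The same device is what makes your duality step valid: the sup-type norms $X_\lambda$ and $X'_\lambda$ from \eqref{def-X}--\eqref{def-X-dual} are not $L^2$-dual to one another, but $X_{\lambda,\alpha}$ and $X'_{\lambda,\alpha}$ are, via Cauchy--Schwarz with weights $\alpha_j^{\pm1}$, and the estimate uniform over admissible $\alpha$ then yields the sup-type bound. Note also that the slowly varying sequence $\{\epsilon_j\}$ from \eqref{ep-property} that you invoke lives on the symbol side; the absorption of the perturbative commutator $[\mathfrak{E}^\pm_\lambda,Q]$ into the lower bound additionally requires increasing $\{\alpha_m\}$ so that $\epsilon_m\le\delta\alpha_{m+\log_2\delta}$, which cannot be replicated without the $\alpha$-weighted norms.
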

The proof is very similar to \cite{tataru4}. Let $\{\alpha_m\}_{m\in\mathbb Z}$ be a positive slowly varying sequene with $\sum\alpha_k=1$. We define the space $X_{k,\alpha}$ with the norm
\begin{align*}
	\|u\|_{X_{k,\alpha}}^2 = 2^{2k}\|u\|^2_{L^2(A_{<-k})}+2^k\sum_{j\ge-k}\alpha_j \|(|x|+2^{-k})^{-\frac12}u\|^2_{L^2(A_j)}
\end{align*}
and the dual space
\begin{align*}
	\|u\|_{X'_{k,\alpha}}^2 = 2^{-2k}\|u\|^2_{L^2(A_{<-k})}+2^{-k}\sum_{j\ge-k}\alpha_j^{-1} \|(|x|+2^{-k})^{\frac12}u\|^2_{L^2(A_j)}
\end{align*}
To prove Theorem \ref{thm-loc-en-est} we show the following localised version:
\begin{prop}\label{prop-local-local}
Assume that $\epsilon>0$ is sufficiently small. Then the bound
\begin{align}\label{local-energy-localised}
\|u\|_{L^\infty_tL^2_x\cap X_{k,\alpha}} \lesssim \|u(0)\|_{L^2_x}+\|(D_t+A^\pm_{(k)}u\|_{L^1_tL^2_x+X_{k,\alpha}'}	
\end{align}
	holds for all functions $u\in L^\infty_tL^2_x\cap X_{k,\alpha}$ localised at frequency $2^k$ uniformly with respect to all slowly varying squences $\{\alpha_m\}$ with $\sum_{m=-\infty}^\infty\alpha_m=1$.
\end{prop}
\begin{proof}
In what follows, we are only concerned with $\pm=+$ and put $A^+_{(k)}=A_{(k)}$ for simplicity.
	Let $Q$ be an $L^2$-bounded self-adjoint operator on $\mathbb R^d$. Then the operator $C_k=i[A_{(k)},Q]$ is also self-adjoint and we have
	\begin{align*}
		2\Im \langle A_{(k)}u,Qu\rangle_{L^2_x} = \langle C_k u,u\rangle_{L^2_x}.
	\end{align*}
	Then we obtain
	\begin{align*}
		\frac{d}{dt}\langle u, Qu\rangle_{L^2_x} = -2\Im \langle (D_t+A_{(k)})u, Q u\rangle_{L^2_x} + \langle C_k u,u\rangle_{L^2_x}.
	\end{align*}
	If we put $Q=I$ this gives the energy estimate
	\begin{align*}
		\frac{d}{dt}\|u(t)\|^2_{L^2_x} = -2\Im \langle (D_t+A_{(k)})u,u\rangle_{L^2_x}.
	\end{align*}
	If $\delta>0$ is a small parameter then the modified energy given by
	\begin{align*}
		E(u) = \|u\|_{L^2_x}^2-\delta\langle u,Qu\rangle_{L^2_x}
	\end{align*}
	is positive-definite and satisfies
	\begin{align*}
		\frac{d}{dt}E(u) = 2\Im\langle (D_t+A_{(k)})u,(1-\delta Q)u\rangle_{L^2_x} - \delta\langle C_ku,u\rangle_{L^2_x}.
	\end{align*}
	Integrating in time we obtain
	\begin{align}
	\|u\|_{L^\infty_tL^2_x}^2+\delta \langle C_ku,u\rangle_{L^2_x} \lesssim \|u(0)\|_{L^2_x}^2+\|(D_t+A_{(k)})S_k u\|_{L^1_tL^2_x+ X_{k,\alpha}'}\|(1-\delta Q)u\|_{L^\infty_tL^2_x\cap X_{k,\alpha}}	
	\end{align}
	We shall use the following lemma:
	\begin{lem}[Lemma 10 of \cite{tataru4}]
		For each $k\in\mathbb Z$ and each slowly varying sequence $\{\alpha_m\}$ there exists an $L^2$-bounded self-adjoint operator $Q$ so that
				\begin{align}
		\langle C_ku,u\rangle \gtrsim \|u\|_{X_{k,\alpha}}^2,\label{est-lem-c} \\
		\|Qu\|_{X_{k,\alpha}} \lesssim \|u\|_{X_{k,\alpha}},	 \label{est-lem-q}
		\end{align}
for all functions $u$ localised at frequency $2^k$.
	\end{lem}
Then the estimate \eqref{local-energy-localised} is obtained by using the Cauchy-Schwartz inequality and the above lemma. Now we prove the lemma. By scaling $(t,x)\to(\lambda^{-1}t,\lambda^{-1}x)$ we restrict ourselves into the unit scale, i.e., $|\xi|\approx1$ and focus on the scaled operator $\mathcal A_{(k)}$. Let $\delta>0$ be a small parameter to be chosen later. We increase the sequence $\{\alpha_m\}$ so that it remains slowly varying and the following properties hold:
\begin{align*}
	\sum_{m>0}\alpha_m \approx 1, \ \epsilon_m \le \delta\alpha_{m+\log_2\delta},
\end{align*}
which implies that $\epsilon<\delta$. To the sequence $\{\alpha_m\}$ we associate a slowly varying function $\alpha$ with the property that $\alpha(s)\approx \alpha_m$, for $s\approx 2^m$ and regularity $|\partial^k\alpha(s)|\approx (1+s)^{-k}\alpha(s)$, for $k\le 4$. We consider a decreasing function $\phi$ on $\mathbb R^+$ with the following properties:
\begin{enumerate}
	\item $\phi(s)\approx(1+s)^{-1}$ for $s>0$ and $|\partial^k\phi(s)|\lesssim (1+s)^{-k-1}$ for $k\le4$.
	\item $\phi(s)+s\phi'(s)\approx(1+s)^{-1}\alpha(s)$ for $s>0$.
	\item $\phi(|x|)$ is localised at frequency at most $O(1)$.
\end{enumerate}
For simplicity, we consider the function defined by
\begin{align*}
	s\phi(s) = \int_0^s \frac{\alpha(r)}{(1+r^2)^\frac12}\,dr.
\end{align*}
A straightforward computation gives
\begin{align*}
    \phi(s)+s\phi'(s) = \frac{\alpha(s)}{(1+s^2)^\frac12}, \\
    2\phi'(s)+s\phi''(s) = \frac{\alpha'(s)}{(1+s^2)^\frac12}-\frac{3s\alpha(s)}{(1+s^2)^\frac32}.
\end{align*}
We define the operator $Q(x,D_x)$ to be
\begin{align*}
	Q(x,D_x) = \delta \left(D_xx\phi(\delta|x|)+\phi(\delta|x|)xD_x\right).
\end{align*}
Then $Q$ is the $L^2_x$-bounded self-adjoint operator on $\mathbb R^d$ and satisfies the estimates \eqref{est-lem-q}. (See Lemma 10 of \cite{tataru4}.) We are left to show the estimates \eqref{est-lem-c}. It suffices to prove that
\begin{align}\label{bdd-lower-C}
|\langle C_0u,u\rangle_{L^2_x}| \gtrsim \left\langle  \frac{\delta\alpha(\delta|x|)}{1+\delta|x|}u,u \right\rangle_{L^2_x}.	
\end{align}
Note that
\begin{align*}
    |\langle C_0u,u\rangle| & = |\langle[ \mathcal A_{(k)}, Q(x,D_x)]u\rangle| \\
    & \ge |\langle [\sqrt{\lambda^{-2}+|D_x|^2},Q(x,D_x)]u,u\rangle| - |\langle \mathrm{Op}(q_{(k)})  ,Q(x,D_x)]u,u \rangle|,
\end{align*}
where we recall that $\mathcal A_{(k)}(s,y,D_y) = \mathrm{Op}_y( a_{(k)}(s,y,\eta)) $ and $a_{(k)}-\sqrt{\lambda^{-2}+|\eta|^2}= q_{(k)}(\lambda^{-1}s,\lambda^{-1}y,\lambda \eta) $.
We compute the commutator $[\mathcal A_{(k)},Q]$. Then
\begin{align*}
	[\sqrt{\lambda^{-2}+|D_x|^2},Q(x,D_x)] & = \delta [\sqrt{\lambda^{-2}+|D_x|^2}, \phi(\delta|x|)+x\phi'(\delta|x|)\delta\frac{x}{|x|}]+2\delta[\sqrt{\lambda^{-2}+|D_x|^2}, \phi(\delta|x|)x]D_x,
\end{align*}
and
\begin{align*}
	[ \mathrm{Op}(q_{(k)}) ,Q(x,D_x)] & = \delta [ \mathrm{Op}(q_{(k)}) ,\phi(\delta|x|)+x\phi'(\delta|x|\delta\frac{x}{|x|})] \\
	& \qquad+2\delta[ \mathrm{Op}(q_{(k)}),\phi(\delta|x|)x]D_x
\end{align*}
To estimate the $L^2$-bound of the commutators, we compute the Poisson bracket:
\begin{align*}
    [\sqrt{\lambda^{-2}+|D_x|^2}, \phi(\delta|\cdot|)\cdot](x,D_x) = \left(\frac{\partial}{\partial\xi}\sqrt{\lambda^{-2}+|\xi|^2}\frac{\partial}{\partial x}\phi(\delta|x|)x\right)(x,D_x) +r(x,D_x),
\end{align*}
where $r(x,D_x)$ is a remainder.
Then
\begin{align*}
    \frac{\partial}{\partial\xi}\sqrt{\lambda^{-2}+|\xi|^2}\frac{\partial}{\partial x}\phi(\delta|x|)x = \frac{\xi}{\sqrt{\lambda^{-2}+|\xi|^2}}(\phi(\delta|x|)+\delta x\phi'(\delta|x|)\frac{x}{|x|})
\end{align*}
is the principal symbol of the commutator. From the above computation, we see that
\begin{align*}
    \left|\frac{\xi}{\sqrt{\lambda^{-2}+|\xi|^2}}(\phi(\delta|x|)+\delta x\phi'(\delta|x|)\frac{x}{|x|})\right| \gtrsim \frac{\alpha(\delta|x|)}{1+\delta|x|}.
\end{align*}
On the other hand,
\begin{align*}
    2\delta\phi'(\delta|x|)+\delta^2 x\phi''(\delta |x|) \lesssim \frac{\delta \alpha(\delta|x|)}{(1+\delta|x|)^2}
\end{align*}
To determine the $L^2$-norm of the Poisson bracket, we need only at most fourth order derivatives of the symbol. 
By choosing $\delta>0$ small enough, we see that the zeroth-order derivative of the principal symbol dominates the bound.
Now we write
\begin{align*}
    |\langle [\sqrt{\lambda^{-2}+|D_x|^2},Q(x,D_x)]u,u\rangle | & \ge 2\delta|\langle [\sqrt{\lambda^{-2}+|D_x|^2}, \phi(\delta|x|)x]D_x u,u\rangle | \\
    &\qquad-\delta |\langle [\sqrt{\lambda^{-2}+|D_x|^2}, \phi(\delta|x|)+x\phi'(\delta|x|)\delta\frac{x}{|x|}]u,u\rangle| \\
    & \gtrsim \langle \frac{\delta\alpha(\delta |x|)}{1+\delta|x|}D_xu,u\rangle \approx \langle \frac{\delta\alpha(\delta |x|)}{1+\delta|x|}u,u\rangle,
\end{align*}
where we used that the function $u$ is localised at frequency $1$. The computation of the perturbative terms containing $q_{(k)}$ follows in the similar way and we get the desired lower bound \eqref{bdd-lower-C} provided that $\epsilon<\delta$.

Low frequency case also follows immediately. Indeed,
if $2^k\le1$, i.e., $k\le0$, we have
\begin{align*}
    \langle [\sqrt{1+|D_x|^2},\phi(\delta|x|)x]D_xu,u\rangle &\gtrsim \langle\frac{ 2^k \alpha(\delta|x|) }{1+\delta|x|}u,u\rangle.
\end{align*}
We conclude that the estimates \eqref{est-lem-c} hold for all $k\in\mathbb Z$.
\end{proof}

\begin{proof}[Proof of Theorem \ref{thm-loc-en-est}]
The argument to obtain the localised energy estimates from the estimates \eqref{local-energy-localised} is identical as the proof of Theorem 3 of \cite{tataru4}. From Proposition \ref{prop-local-local} one can deduce that for any function $u\in L^\infty_tL^2_x\cap X^0$ localised at frequency $|\xi|\approx2^k$, the following localised estimates hold:
\begin{align*}
	\|S_ku\|_{L^\infty_tL^2_x}+\|S_ku\|_{X_k} & \lesssim \|S_ku_0\|_{L^2_x}+ \|(D_t+A^\pm_{(k)})S_ku\|_{L^1_tL^2_x+X_{k}'}.
	\end{align*}
From this,  we infer that
\begin{align*}
	\|u\|_{L^\infty_tH^s_x\cap X^s} ^2
    %&= \|u\|_{L^\infty_tH^s_x}+\|u\|_{X^s} \\
	\lesssim{} & \sum_{k\in\mathbb Z}\langle 2^k\rangle^{2s}\|S_ku\|_{L^\infty_tL^2_x\cap X_k}^2 \\
	 \lesssim{} &\sum_{k\in\mathbb Z}\langle 2^k\rangle^{2s}\|S_ku(0)\|_{L^2_x}^2+\sum_{k\in\mathbb Z}\langle 2^k\rangle^{2s}\|(D_t+A^\pm_{(k)})S_ku\|_{L^1_tL^2_x+X_k'}^2
     \end{align*}
     Using Proposition \ref{prop-mollified-operator} this is bounded by
     \begin{align*}
   & \|u(0)\|_{H^s}^2+\sum_{k\in\mathbb Z}\langle 2^k\rangle^{2s}\left(\|S_k(D_t+A^\pm_{(k)})u\|_{L^1_tL^2_x+X_k'}^2+ \|[A^\pm_{(k)},S_k]u\|_{X_k'}^2
   %+\|S_k(A^\pm_{(k)}-\widetilde A^\pm)u\|_{X_k'}^2 
   \right)\\
    \lesssim{} & \|u(0)\|^2_{H^s}+\|(D_t+\widetilde A^\pm)u\|_{L^1_tH^s_x+Y^s}^2+\epsilon \|u\|_{X^s}^2 \\
    \lesssim{} & \|u(0)\|^2_{H^s}+\|(D_t+ A^\pm)u\|_{L^1_tH^s_x+Y^s}^2+\|(\widetilde A^\pm-A^\pm)u\|_{Y^s}+\epsilon \|u\|_{X^s}^2 \\
     \lesssim{}  &\|u(0)\|^2_{H^s}+\|(D_t+ A^\pm)u\|_{L^1_tH^s_x+Y^s}^2+\epsilon \|u\|_{X^s}^2,
\end{align*}
and because $\epsilon$ is small the proof is complete.
 \end{proof}
\subsection{Parametrices and the endpoint Strichartz estimates}
Now we construct an outgoing parametrix for the evolution operator $D_t+A_{(k)}(t,x,D_x)$.
We first use the scaling argument $(t,x)\to (\lambda^{-1}t,\lambda^{-1}x)$ for $\lambda\ge1$ so that the operators $A^\pm_{(k)}$ are transformed to $\mathcal A^\pm_{(k)}$, see Subsection \ref{sec:scaling}. We recall that the rescaled operator $\mathcal A^\pm_{(k)}$ is given by
\begin{align*}
    \mathcal A^\pm_{(k)}(t,x,D_x) = \pm \mathrm{Op}( a_{(k)}(t,x,\xi)),
\end{align*}
where $a_{(k)}(t,x,\xi) = \sqrt{\lambda^{-2}+|\xi|^2}+q_{(k)}(\lambda^{-1}t,\lambda^{-1}x,\lambda \xi) $.
%{\color{red}lower-order terms?}
The rescaled operator is localised at unit frequency $|\xi|\approx 1$ provided that $\lambda=2^k$ for $k>0$.

Since the operator $D_t+A^\pm_{(k)}$ for $k\le0$, i.e., the operator localised in the unit disc $\{|\xi|\le1\}$ can be treated as similar manner we consider only $k>0$ in the following.

%We keep in mind that the operators $A(t,x,D_x)$ do not obey the scale invariance, and hence one still needs to denote the scaled operator with the parameter $k$.

Now we construct an outgoing parametrix for the rescaled operators $D_t+\mathcal A^\pm_{(k)}$ with $\lambda=2^k$.
\begin{prop}\label{prop-parametrix}
    Assume that $\epsilon>0$ is sufficiently small. There is an outgoing parametrix $\mathcal K^\pm_{(k)}$ for $D_t+\mathcal A^\pm_{(k)}$ satisfying the following: 
    \begin{enumerate}
        \item $L^2$-bound: \begin{align}
        \|\mathcal K^\pm_{(k)}(t,s)\|_{L^2_x\rightarrow L^2_x} \lesssim1, 
        \end{align}
        \item Error estimate:
        \begin{align}
            \begin{aligned}
                \|(1+|x|)^N (D_t+\mathcal A^\pm_{(k)})\mathcal K^\pm_{(k)}(t,s)\|_{L^2_x\rightarrow L^2_x} \lesssim (1+|t-s|)^{-N}, \quad t\neq s, \\
                \|(1+|x|)^N D_t(D_t+\mathcal A^\pm_{(k)})\mathcal K^\pm_{(k)}(t,s)\|_{L^2_x\rightarrow L^2_x} \lesssim (1+|t-s|)^{-N}, \quad t\neq s,
            \end{aligned}
        \end{align}
        \item Jump condition: $\mathcal K^\pm_{(k)}(s+0,s)$ and $\mathcal K_{(k)}^\pm(s-0,s)$ are $S^0_{1,0}$-type pseudodifferential operators satisfying
        \begin{align}
            (\mathcal K^\pm_{(k)}(s+0,s)-\mathcal K^\pm_{(k)}(s-0,s))S_1 = S_1,
        \end{align}
        \item Outgoing parametrix:
        \begin{align}
            \|\mathbf1_{\{|x|<2^{-10}|t-s|\}}\mathcal K^\pm_{(k)}(t,s)\|_{L^2_x\rightarrow L^2_x} \lesssim(1+|t-s|)^{-N},
        \end{align}
        \item Pointwise decay: for $0\le\theta\le1$,
        \begin{align}
            \|\mathcal K^\pm_{(k)}(t,s)\|_{L^1_x\rightarrow L^\infty_x} \lesssim \langle 2^k\rangle^\theta (1+|t-s|)^{-\frac{d-1+\theta}{2}}.
        \end{align}
    \end{enumerate}
  %  Here $\mathcal K^\pm_\lambda$ is defined by
   % \begin{align*}
    %    \mathcal K^\pm_\lambda f(t) = \int_{-\infty}^\infty \mathcal K^\pm_\lambda(t,s)f(s)\,ds.
    %\end{align*}
\end{prop}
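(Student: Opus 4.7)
The plan is to construct $\mathcal{K}^\pm_\lambda$ by a phase-space (FBI) transport method, following the Metcalfe--Tataru--Tataru--Xue strategy, with extra bookkeeping in the parameter $\lambda$. Concretely, I would fix an FBI transform $T$ adapted to the unit frequency scale (as recalled in Section~\ref{sec:micro}), satisfying $T^{\ast}T = I$ on the range of $P_1$. Let $\chi_t^\pm(x,\xi)$ denote the Hamiltonian flow generated by the symbol $A^\pm_\lambda(t,x,\xi)$; by assumptions \eqref{as1-symbol-reg}--\eqref{as3-symbol-decay} this flow is a small, smooth perturbation of the flat flow $x \mapsto x \pm t\,\xi/\sqrt{\lambda^{-2}+|\xi|^2}$ on the region $|\xi|\approx 1$. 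I would then define
\[
\mathcal{K}^\pm_\lambda(t,s) \;=\; T^{\ast}\, \Phi^\pm_\lambda(t,s)\, T P_1,
\]
where $\Phi^\pm_\lambda(t,s)$ is the unitary phase-space transport along $\chi^\pm_{t-s}$, damped by a suitable amplitude that corrects for the metaplectic factor and for the off-diagonal behavior of $T T^{\ast}$.

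The properties (1), (3) and (4) follow essentially formally. The $L^2$-bound (1) is immediate from $\|T\|_{L^2\to L^2}\lesssim 1$, the unitarity of $\Phi^\pm_\lambda$, and $\|T^\ast\|\lesssim 1$. The jump condition (3) reduces to $T^\ast T P_1 = P_1$ in the limit $t\to s^\pm$, where one reads off that the limits are standard $S^0_{1,0}$ pseudodifferential operators. The outgoing estimate (4) is a finite-speed-of-propagation property: the Hamiltonian flow on $|\xi|\approx 1$ satisfies $|\dot x| = |\partial_\xi A^\pm_\lambda| \ge 1 - O(\epsilon)$, so the phase space support of $\Phi^\pm_\lambda(t,s) T f$ is essentially contained in $\{|x|\gtrsim |t-s|\}$; the contribution inside $|x|<2^{-10}|t-s|$ is then Schwartz tails bounded by any power of $(1+|t-s|)^{-N}$, with the $\lambda^\theta$ factor absorbed trivially (for $\theta>0$) using the $L^2$-bound.

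The error estimate (2) and the pointwise decay (5) are the main obstacles and require the quantitative use of the asymptotic flatness assumptions. For (2), I would compute $(D_t + A^\pm_\lambda)\mathcal{K}^\pm_\lambda(t,s)$ symbolically in the phase space picture: the leading term cancels since $\Phi^\pm_\lambda$ solves the Hamiltonian equation for $A^\pm_\lambda$, and the remainder is controlled by second-order derivatives of the symbol localized to the tube of the transported packet centered at $(x_t,\xi_t)$ with $|x_t|\approx |t-s|$. By \eqref{as2-symbol-small}--\eqref{as3-symbol-decay} this region contributes a factor bounded by $\epsilon_j$ at scale $2^j\approx|t-s|$, which after summing in $j$ yields the $(1+|t-s|)^{-N}$ decay, and $\lambda^\theta$ is inserted by trivial interpolation with the $L^2$ bound. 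For (5), the Schwartz kernel of $\mathcal{K}^\pm_\lambda(t,s)$ is represented as an oscillatory integral whose phase is a small perturbation of $(x-y)\cdot\xi \mp (t-s)\sqrt{\lambda^{-2}+|\xi|^2}$. Stationary phase on the annulus $|\xi|\approx 1$ gives the classical half Klein--Gordon decay: the Hessian in the tangential directions to $\{|\xi|=1\}$ has eigenvalues of order $(t-s)$, giving $(d-1)$ degrees of dispersion at rate $(1+|t-s|)^{-(d-1)/2}$, while the radial direction contributes an additional factor of $\lambda^{-1}(1+|t-s|)^{-1/2}$ due to the $\lambda^{-2}$ curvature correction $\partial_{|\xi|}^2 \sqrt{\lambda^{-2}+|\xi|^2}\approx \lambda^{-2}$ on $|\xi|\approx 1$. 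Writing the wave-like bound and the full Klein--Gordon bound as endpoints and interpolating produces $\lambda^\theta (1+|t-s|)^{-(d-1+\theta)/2}$. The key technical difficulty is tracking the perturbation errors from the variable coefficients uniformly in $\lambda$, which is where the flatness assumptions at the rescaled level become essential and distinguishes this argument from \cite{tataru4, metataru}.
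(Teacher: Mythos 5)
Your proposal correctly identifies the FBI transform / phase-space transport framework as the right construction, and your heuristic for the pointwise decay (5) is on target: the anisotropic Hessian of $\sqrt{\lambda^{-2}+|\xi|^2}$ on $|\xi|\approx1$ has $d-1$ eigenvalues of size $O(1)$ and one radial eigenvalue of size $\lambda^{-2}$, which the paper tracks via the Jacobian $\Phi^d_{KG}(\xi)=\langle\xi\rangle_\lambda^{-3}(\langle\xi\rangle_\lambda^2\mathbf I_d-\xi\otimes\xi)$ of the Hamilton flow (Lemma \ref{pro-jacobian-xi}) and a distorted norm $\|\cdot\|_\lambda$, then interpolates between the wave-type bound ($\theta=0$) and the Klein--Gordon bound ($\theta=1$). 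Your interpolation step is the same.

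However, there are two substantive gaps. First, you claim the outgoing property (4) and the error estimate (2) ``follow essentially formally'' from the transport speed $|\dot x|\geq 1-O(\epsilon)$ of the Hamiltonian flow. This is not true at the operator level: the FBI transform spreads an initial datum over a ball of scale $\sqrt{s}$ in physical space, and the uncertainty principle produces leakage into $\{|x|\ll t\}$ and outside $\{|\xi|\approx1\}$ that decays only polynomially, far short of the $(1+|t-s|)^{-N}$ needed. The paper addresses this explicitly by inserting an \emph{artificial damping term} $\mathfrak B^w_\lambda(t,x,D_x)$ into the transport equation, with $\mathfrak B_\lambda\in\ell^1 S^{(1)}$ constructed in Section \ref{sec:construction-damping} to vanish on the main propagation regime $\{|x|\approx t,\ |\xi|\approx1,\ x\cdot\xi\gtrsim -|x||\xi|\}$ while being $\approx t^{-3/4}$ outside it, and non-increasing along the flow. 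All of (2), (4), and the frequency-localisation and finite-speed estimates hinge on this damping; your construction $T^\ast\Phi^\pm_\lambda T P_1$ without it would not satisfy them.

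Second, you omit the spatial/conical partition $\mathcal P^\pm_j(x,D_x)$ of the phase space (supported in $\{2^{j-1}<|x|<2^{j+1},\ \pm\hat x\cdot\hat\xi\geq -2^{-5}\}$). The parametrix in the paper is $\mathcal K^\pm_\lambda(t,s)=\sum_j\mathcal S^+_\lambda(t,s)\mathcal P^+_j$ for $t>s$ and $\sum_j\mathcal S^-_\lambda(t,s)\mathcal P^-_j$ for $t<s$; the jump condition (3) records that $\sum_j\mathcal P^+_j+\sum_j\mathcal P^-_j$ recovers the frequency projection. Your reading of (3) as $T^\ast T P_1=P_1$ misses this outgoing/incoming split, which is what gives a well-defined two-sided parametrix and is used in the duality argument that follows the proposition. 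Without these two ingredients, the construction you outline does not satisfy (2)--(4) as stated.
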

The factor $\langle 2^k\rangle$ of the bounds in Proposition is related to the scaling argument $(t,x)\to(2^{-k}t,2^{-k}x)$.
%The similar results also hold for the operator $D_t+A^\pm_{(k)}$ with $k\le0$.
The proof of Proposition \ref{prop-parametrix} is postponed to Section 5. In the rest of this section we focus on the proof of the endpoint Strichartz estimates Theorem \ref{thm-strichartz} and Theorem \ref{thm-dirac-reform} using Proposition \ref{prop-parametrix}.
\begin{prop}
	An outgoing parametrix $\mathcal K^\pm_{(k)}$ as in Proposition \ref{prop-parametrix} satisfies the following:
	\begin{enumerate}
		\item (regularity) For any Strichartz pairs $(p_1,q_1)$ respectively $(p_2,q_2)$ with $q_1\le q_2$ we have
		\begin{align}\label{est-stri-local}
		\langle2^k\rangle^{-(1-\frac2{q_1})\frac\theta2}\|\mathcal K^\pm_{(k)} f\|_{L^{p_1}_tL^{q_1}_x}+\|\mathcal K^\pm_{(k)} f\|_{X_1} & \lesssim \langle2^k\rangle^{(1-\frac2{q_2})\frac\theta2} \|f\|_{L^{p_2'}_tL^{q_2'}_x}, %\quad   \lesssim \|f\|_{L^1_tL^2_x}. 	
		\end{align}
\item (error estimate) For any Strichartz pair $(p,q)$ we have
\begin{align}\label{error-local}
	\| [(D_t+\mathcal A^\pm_{(k)})\mathcal K^\pm_{(k)}-1]f\|_{X'_0} & \lesssim \langle2^k\rangle^{(1-\frac2{q_2})\frac\theta2} \|f\|_{L^{p'}_tL^{q'}_x}.
\end{align} 
	\end{enumerate}

\end{prop}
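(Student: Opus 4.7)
The plan is to derive both claims from Proposition \ref{prop-parametrix} together with the localised energy estimate (Proposition \ref{prop-local-local}). The Strichartz contribution to (1) is obtained by applying the Keel--Tao machinery to the dispersive bound supplied by items (1) and (5) of Proposition \ref{prop-parametrix}, while the $X_1$-bound in (1) and the error bound (2) follow from the weighted kernel estimate in item (2), combined with the unit-frequency localisation of $\mathcal{K}^\pm_\lambda$.

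For the Strichartz part, Riesz--Thorin interpolation between the $L^2_x \to L^2_x$ bound of item (1) and the $L^1_x \to L^\infty_x$ dispersive bound of item (5) yields, for $q\in[2,\infty]$,
\[
\|\mathcal{K}^\pm_\lambda(t,s)\|_{L^{q'}_x\to L^q_x}\lesssim \lambda^{(1-2/q)\theta}(1+|t-s|)^{-\frac{d-1+\theta}{2}(1-2/q)}.
\]
Feeding this into the Keel--Tao $TT^*$-duality argument (including the atomic decomposition at the endpoint $p=2$, valid when $(d,\theta)\ne(3,0)$) produces
\[
\|\mathcal{K}^\pm_\lambda f\|_{L^{p_1}_tL^{q_1}_x}\lesssim \langle\lambda\rangle^{\frac{\theta}{2}[(1-2/q_1)+(1-2/q_2)]}\|f\|_{L^{p_2'}_tL^{q_2'}_x}
\]
for any two admissible pairs $(p_1,q_1)$ and $(p_2,q_2)$; the $\lambda$-exponent splits symmetrically between the output and input pairs as is standard for $TT^*$, and this recovers the Strichartz contribution of (1).

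For the $X_1$-part and the error estimate, write $u:=\mathcal{K}^\pm_\lambda f$ and set $E(t,s):=(D_t+A^\pm_\lambda)\mathcal{K}^\pm_\lambda(t,s)$. The jump condition (item (3) of Proposition \ref{prop-parametrix}) together with the unit-frequency localisation of $\mathcal{K}^\pm_\lambda$ gives $(D_t+A^\pm_\lambda)u=f+Ef$ where $Ef(t):=\int E(t,s)f(s)\,ds$. Item (2) of Proposition \ref{prop-parametrix} provides $\|(1+|x|)^N E(t,s)\|_{L^2_x\to L^2_x}\lesssim \lambda^\theta(1+|t-s|)^{-N}$. Since $|x|^{1/2}(1+|x|)^{-N}$ is uniformly bounded for any $N\ge 1/2$, the $X_1'$-norm of $Ef$ is majorised by $\|(1+|x|)^N Ef\|_{L^2_{t,x}}$; by Minkowski and Young's convolution inequality in $t$, this is in turn bounded by $\lambda^\theta\|f\|_{L^{p_2'}_t L^2_x}$ for $N$ large and $p_2\ge 2$. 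Bernstein at unit frequency (applicable since $\mathcal{K}^\pm_\lambda P_1=\mathcal{K}^\pm_\lambda$) passes from $L^2_x$ to $L^{q_2'}_x$ on the input, and optimising the free parameter $\theta$ in item (2) against the admissibility relation gives (2). The $X_1$-bound on $u$ then follows from Proposition \ref{prop-local-local} at $\lambda=1$ applied to $u$ with source $f+Ef$, each summand controlled by Strichartz and by the error bound just derived.

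The main technical obstacle is distributing the $\lambda$-exponent between input and output in the Strichartz estimate: a direct Minkowski--Hardy--Littlewood--Sobolev application yields the single $\lambda^{(1-2/q_1)\theta}$ factor concentrated on the output side, which is strictly worse than the symmetric $\lambda^{\theta/2}$-splitting produced by the $TT^*$ argument. At the endpoint $p=2$, one must further invoke the Keel--Tao atomic decomposition and use the outgoing property (item (4) of Proposition \ref{prop-parametrix}) to discard the region $|x|\ll |t-s|$, where the dispersive bound alone would be insufficient.
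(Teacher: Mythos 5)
Your treatment of the Strichartz portion and of the error estimate is broadly the same route as the paper (interpolate the $L^2$ and dispersive bounds, then $TT^*$, then handle the endpoint by a Keel--Tao atomic decomposition), with one caveat: $\mathcal K_\lambda^\pm(t,s)$ is \emph{not} of the form $U(t)U(s)^*$, so ``the $\lambda$-exponent splits symmetrically between the output and input pairs as is standard for $TT^*$'' is not automatic. The paper has to establish a substitute pseudo-group property,
\[
\| \mathcal K^\pm_\lambda(s_1,t)^*\mathcal K^\pm_\lambda(t,s_2)-\mathcal K^\pm_\lambda(s_1,s_1+0)^*\mathcal K^\pm_\lambda(s_1,s_2)\|_{L^2\rightarrow L^2} \lesssim (1+|s_1-s_2|)^{-N},
\]
to run the $TT^*$ argument, and then obtains the off-diagonal $(q_1,q_2)$ estimates with the symmetric $\langle\lambda\rangle^{\frac{\theta}{2}[(1-2/q_1)+(1-2/q_2)]}$ factor by first proving the half-power $L^\infty_t L^2_x$ bound and interpolating it against the diagonal $L^{p_2}_tL^{q_2}_x$ bound. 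Your proposal implicitly assumes this works out; it does, but only after importing that lemma.

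The genuine gap is in your proof of the $X_1$-bound. You propose to apply Proposition~\ref{prop-local-local} to $u=\mathcal K_\lambda^\pm f$ with source $(D_t+A_\lambda^\pm)u=f+Ef$. That estimate bounds $\|u\|_{L^\infty_tL^2_x\cap X_{1,\alpha}}$ by $\|u(0)\|_{L^2_x}+\|(D_t+A_\lambda^\pm)u\|_{L^1_tL^2_x+X_1'}$, so you would need to control $\|f+Ef\|_{L^1_tL^2_x+X_1'}$. The term $Ef$ is fine (rapid decay of the error kernel plus Young in $t$ plus Bernstein, as you say), but the term $f$ itself is only given in $L^{p_2'}_tL^{q_2'}_x$, and there is no embedding $L^{p_2'}_tL^{q_2'}_x\hookrightarrow L^1_tL^2_x+X_1'$; in particular $\|f\|_{L^1_tL^2_x+X_1'}$ cannot be dominated by $\|f\|_{L^{p_2'}_tL^{q_2'}_x}$. (You also do not address $\|u(0)\|_{L^2_x}$.) So ``each summand controlled by Strichartz and by the error bound just derived'' does not actually cover the $f$ summand, and the energy-estimate route stalls. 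The paper instead proves the $X_1$ bound directly from the kernel: it reduces to showing
\[
\|\mathbf 1_{|x|<R}\,\mathcal K^\pm_\lambda(t,s)\|_{L^{p'}_tL^{q'}_x\rightarrow L^2_tL^2_x}\ \lesssim\ \langle\lambda\rangle^{(1-\frac2q)\frac\theta2}R^{\frac12},
\]
splits $\mathcal K^\pm_\lambda$ into the regimes $|t-s|<2^{10}R$ and $|t-s|>2^{10}R$, handles the first by H\"older in $t$ on an interval of length $\sim R$ together with the already-proved $L^\infty_tL^2_x$ Strichartz bound, and handles the second by the outgoing/rapid-decay kernel bounds from Proposition~\ref{prop-parametrix} and Young's inequality in $t$. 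If you replace your energy-estimate step by that time-splitting argument, the rest of your proposal is workable.
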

\begin{proof}
The error estimates follow directly from the error-type estimate of Proposition \ref{prop-parametrix}. We leave the proof of \eqref{est-stri-local} to the Appendix, which mainly follows the work of \cite{tataru4}.
\end{proof}
We recall that a tuple $(\sigma,p,q)$ is called the Strichartz admissible pair if it satisfies
\begin{align}\label{strichartz-admissible-pair}
\frac{2}{p}+\frac{d-1+\theta}{q} = \frac{d-1+\theta}{2}, \ \sigma = \left( \frac{d+1+\theta}{4} \right)\left( 1-\frac2q \right)
\end{align}
for $0\le\theta\le1$. Here the $(\sigma,p,q)=(1,2,\infty)$ with $d=3$ is the forbidden endpoint. So far we have considered the scaled operator $\mathcal A^\pm_{(k)}$. By rescaling, we recover the operator $A^\pm_{(k)}$, and hence we are able to obtain the localised Strichartz estimates and the error-type estimates for each $k\in\mathbb Z$. Let $\widetilde K^\pm_{(k)}$ denote the rescaled outgoing parametrices. Then the dyadic decomposition implies the Strichartz estimates for the mollified operator $\widetilde{A}^\pm$. The full outgoing parametrices for $\tilde{A}^\pm$ are given as
\begin{align*}
	\widetilde K^\pm(t,s) =\sum_{k \in \mathbb Z}\widetilde K^\pm_{(k)}(t,s).
\end{align*}
%Furthermore, Proposition \ref{prop-mollified-operator} ensures that we can replace $\widetilde{A}^\pm$ by the original operator $A$ up to an acceptable error. 
In consequence, we obtain the following:
\begin{prop}\label{prop-stri-error}
	There is an outgoing parametrix $\widetilde{K}^\pm$ for the operator $D_t+\widetilde A^{\pm}(t,x,D_x)$ which satisfy the following:
	\begin{enumerate}
	\item (Strichartz estimates) For any $s\in\mathbb R$ and Strichartz admissible pairs $(\sigma_1,p_1,q_1)$ and $(\sigma_2,p_2,q_2)$ satisfying \eqref{strichartz-admissible-pair} with $q_1\le q_2$ we have
	\begin{align}
	\|\widetilde K^\pm f\|_{\langle D_x\rangle^{\sigma_1-s}L^{p_1}_tL^{q_1}_x\cap X^s} \lesssim \|f\|_{\langle D_x\rangle^{-\sigma_2-s}L^{p_2'}_tL^{q_2'}_x},	
	\end{align}
\item (Error-type estimates) For any $s\in\mathbb R$ and Strichartz pair $(\sigma,p,q)$ we have
\begin{align}
\| [(D_t+\widetilde{A}^\pm) \widetilde K^\pm-1]f\|_{Y^s} & \lesssim \|f\|_{\langle D_x\rangle^{-\sigma-s}L^{p'}_tL^{q'}_x}.	
\end{align}
\end{enumerate}
\end{prop}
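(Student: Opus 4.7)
The plan is to pass from the frequency-localized estimates of the preceding proposition to the full statement by standard Littlewood--Paley decomposition, the scaling argument from Subsection \ref{subsec:scaling}, and orthogonality. Concretely, I would define
\begin{align*}
    \mathcal K^\pm f = \mathcal K^\pm_{\le 1}(P_{\le 1}f) + \sum_{\lambda\ge 1}\widetilde{\mathcal K}^\pm_\lambda (P_\lambda f),
\end{align*}
where $\widetilde{\mathcal K}^\pm_\lambda$ denotes the operator obtained from the unit-scale parametrix $\mathcal K^\pm_\lambda$ of Proposition \ref{prop-parametrix} by undoing the rescaling $(t,x)\mapsto(\lambda^{-1}t,\lambda^{-1}x)$. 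Since each $\mathcal K^\pm_\lambda$ is frequency-localized in the annulus $|\xi|\approx 1$, the operator $\widetilde{\mathcal K}^\pm_\lambda (P_\lambda f)$ is localized in $|\xi|\approx\lambda$; together with the orthogonality of the Littlewood--Paley pieces this will allow a clean $\ell^2$-summation.

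For the Strichartz estimate, I would first use the Littlewood--Paley characterization and Minkowski's inequality (which is applicable because $p_1,q_1\ge 2$) to reduce the left-hand side to
\begin{align*}
    \|\mathcal K^\pm f\|_{\langle D_x\rangle^{\sigma_1-s}L^{p_1}_tL^{q_1}_x}^2 + \|\mathcal K^\pm f\|_{X^s}^2 \lesssim \sum_{\lambda}\lambda^{2(s-\sigma_1)}\left(\|P_\lambda \mathcal K^\pm f\|_{L^{p_1}_tL^{q_1}_x}^2 + \lambda^{2\sigma_1}\|P_\lambda \mathcal K^\pm f\|_{X_\lambda}^2\right).
\end{align*}
For each dyadic piece, the scaling argument of Subsection \ref{subsec:scaling} reduces the operator $P_\lambda(D_t+A^\pm)$ to the unit-scale operator $D_t+A^\pm_\lambda$, and on this scale the previous proposition provides the estimate
\begin{align*}
    \langle\lambda\rangle^{-(1-\frac{2}{q_1})\frac{\theta}{2}} \|\mathcal K^\pm_\lambda g\|_{L^{p_1}_tL^{q_1}_x} + \|\mathcal K^\pm_\lambda g\|_{X_1} \lesssim \langle\lambda\rangle^{(1-\frac{2}{q_2})\frac{\theta}{2}} \|g\|_{L^{p_2'}_tL^{q_2'}_x}.
\end{align*}
Undoing the scaling produces Jacobian factors of the form $\lambda^{-1/p - d/q}$ in each mixed-norm, and after matching these with $\langle\lambda\rangle^{(1-2/q_i)\theta/2}$ one obtains exactly the power $\lambda^{\sigma_i}$ dictated by the admissibility relation \eqref{strichartz-admissible-pair}. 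Summing in $\lambda$ using $\ell^2$-orthogonality of the Littlewood--Paley projections (together with the definition of the $X^s$ norm) yields the claimed bound.

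The error-type estimate follows along the same lines: the norm $Y^s$ is itself defined as the $\ell^2_\lambda$-sum of the localized dual norms $\lambda^{s}\|P_\lambda \cdot\|_{X_\lambda'}$, so it suffices to sum the unit-scale error bound \eqref{error-local} over the dyadic pieces, after rescaling. The unit-scale error bound reduces, via the scaling, directly to the second item in Proposition \ref{prop-parametrix}, which forces the loss of $\lambda^\theta$ that is exactly absorbed by the $\langle D_x\rangle^{-\sigma-s}$ weight on the right-hand side.

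The only genuine technical point is bookkeeping in the scaling argument: one has to verify that the combination of (i) the explicit $\langle\lambda\rangle$-powers appearing in the unit-scale Strichartz bound \eqref{est-stri-local}, (ii) the scaling factors from $L^{p}_tL^{q}_x$ under $(t,x)\mapsto(\lambda t,\lambda x)$, and (iii) the factors produced by the frequency-localized projections, combine to yield precisely the loss $\sigma_1+\sigma_2$ prescribed by \eqref{strichartz-admissible-pair}. The forbidden endpoint $(2,\infty)$ in $d=3$ when $\theta=0$ is avoided because the unit-scale proposition already excludes it; for $\theta>0$ the endpoint is available on the unit scale and the above summation preserves it. No additional work is needed to handle the low-frequency piece $P_{\le 1}f$, as Proposition \ref{prop-parametrix} applies to $D_t+A^\pm_{\le 1}$ with $\lambda=1$ and trivially fits into the same framework.
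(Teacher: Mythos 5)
Your proposal is correct and follows essentially the same route as the paper, which simply asserts that the result is obtained by ``direct application of the Littlewood--Paley theory'' to the frequency-localised estimates \eqref{est-stri-local} and \eqref{error-local}; you are just making this explicit: decompose $\mathcal K^\pm$ into $\mathcal K^\pm_{\le1}+\sum_\lambda\widetilde{\mathcal K}^\pm_\lambda P_\lambda$, pass each piece to unit scale, apply the localised proposition, undo the scaling, and $\ell^2$-sum using the square function estimate (plus Minkowski for $p_1,q_1\ge2$) and the $\ell^2_\lambda$-structure of $X^s,Y^s$. The bookkeeping does close: with $Q=\frac{1}{q_1}+\frac{1}{q_2}$, the Jacobian exponent from the rescaling $u(t,x)=v(\lambda t,\lambda x)$, $f(t,x)=\lambda g(\lambda t,\lambda x)$ is $\frac{d+1-\theta}{2}(1-Q)$, and adding the $\langle\lambda\rangle^{\theta(1-Q)}$ loss from \eqref{est-stri-local} gives exactly $\frac{d+1+\theta}{2}(1-Q)=\sigma_1+\sigma_2$, matching \eqref{strichartz-admissible-pair}.
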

%Even though our parametrix $\mathcal K^\pm$ is contructed accurately, there is still an error, i.e., $u= \mathcal K^\pm f+\phi$, for some $\phi$. To control $\phi$, we need the following.
The next step is to establish an $L^2\to L^p_tL^q_x$ bound using duality.
\begin{prop}\label{prop-2-pq-dual}
	With $\widetilde K^\pm$ as in Proposition \ref{prop-stri-error} we have
	\begin{align}
	\| u\|_{\langle D_x\rangle^{-s+\sigma}L^p_tL^q_x} & \lesssim \|u\|_{L^\infty_t H^s_x\cap X^s} + \|(D_t+\widetilde A^\pm) u\|_{Y^s}.	
	\end{align}
\end{prop}
\begin{proof}
	Consider arbitrary $-\infty<T^-< T^+<+\infty$. We proceed by the usual duality argument. Indeed, we set $g\in \langle D_x\rangle^{s-\sigma}L^{p'}_tL^{q'}_x$. Then we have
	\begin{align*}
		\int_{T^-}^{T^+} \langle u,g\rangle_{L^2_x}\,dt & = \int_{T^-}^{T^+} \langle u, (D_t+\widetilde A^\pm) \widetilde K^\pm g\rangle_{L^2_x}\,dt   - \int_{T^-}^{T^+} \langle u,[(D_t+\widetilde A^\pm)\widetilde  K^\pm-1]g\rangle_{L^2_x}\,dt .
	\end{align*}
%	We first note that the symbol of the operator $\mathfrak A^\pm $ is of the class $ S^{0}_{00}$, whose seminorm is bounded by $\epsilon$. 
 Since the symbol of the operator $\widetilde A^\pm(t,x,D_x)$ is real, the operator $\widetilde A^\pm$ is self-adjoint and hence we see that
	\begin{align*}
		\int_{T_-}^{T^+}\langle u, \widetilde A^\pm(t,x,D_x)\widetilde K^\pm g\rangle_{L^2_x}\,dt & = \int_{T_-}^{T^+} \langle \widetilde A^\pm(t,x,D_x) u,{\widetilde K^\pm g}\rangle_{L^2_x}\,dxdt,
	\end{align*}
	and the use of integration by parts gives
	\begin{align*}
		\int_{T_-}^{T^+} \langle u, D_t\widetilde K^\pm g\rangle_{L^2_x}\,dt & = \int_{T_-}^{T^+} \langle D_t u,\widetilde K^\pm g\rangle_{L^2_x}\,dt + \langle u,\widetilde K^\pm g\rangle_{L^2_x}\bigg|^{T^+}_{T_-}.
	\end{align*}
	Thus we have
	\begin{align*}
		\int_{T^-}^{T^+}\langle u, g\rangle_{L^2_x}\,dt = \int_{T^-}^{T^+} \langle (D_t+\widetilde A^\pm)u, \widetilde K^\pm g\rangle_{L^2_x}\,dt - \int_{T^-}^{T^+}\langle u, [(D_t+\widetilde A^\pm)\widetilde K^\pm-1]g\rangle_{L^2_x}\,dt+\langle u,\widetilde K^\pm g\rangle_{L^2_x}\bigg|^{T^+}_{T_-},
	\end{align*}
	and hence we obtain
	\begin{align*}
		\left| \int_{T^-}^{T^+}\langle u, g\rangle_{L^2_x}\,dt\right| & \lesssim \|(D_t+\widetilde A^\pm)u\|_{Y^s} \|\widetilde K^\pm g\|_{X^{-s}}+\|u\|_{X^{s}}\|[(D_t+\widetilde A^\pm)\widetilde K^\pm-1]g\|_{Y^{-s}}+\|u\|_{L^\infty_t H^s_x}\|\widetilde K^\pm g\|_{L^\infty_tH^{-s}_x} \\
		& \lesssim \left(\|u\|_{L^\infty_tH^s_x\cap X^s}+\|(D_t+\widetilde A^\pm)u\|_{Y^s}\right) \|g\|_{\langle D_x\rangle^{s-\sigma}L^{p'}_tL^{q'}_x}.
	\end{align*}
    Since the constants are independent of $T^\pm$, we finally obtain
	\begin{align*}
		\|u\|_{\langle D_x\rangle^{-s+\sigma}L^p_tL^q_x} & = \sup\left\{ \left| \int_{\mathbb R} \langle u,g\rangle_{L^2_x}\,dt \right| : \|g\|_{\langle D_x\rangle^{s-\sigma}L^{p'}_tL^{q'}_x}\le1 \right\} \\
		& \lesssim \|u\|_{L^\infty_tH^s_x\cap X^s}+\|(D_t+\widetilde A^\pm)u\|_{Y^s},
	\end{align*} 
	which completes the proof.
\end{proof}
Now we cover the full range of $q$ for the Strichartz estimates.
\begin{prop} We have
\begin{align}
\|\widetilde K^\pm f\|_{\langle D_x\rangle^{\sigma_1-s}L^{p_1}_tL^{q_1}_x\cap X^s} & \lesssim \|f\|_{\langle D_x\rangle^{-s-\sigma_2}L^{p_2'}_tL^{q_2'}_x},	
\end{align}
for any Strichartz admissible $(\sigma_j,p_j,q_j)$, $j=1,2$.
\end{prop}
\begin{proof}
We repeat the duality argument with $u=\widetilde K^\pm f$ and $g\in \langle D_x\rangle^{s-\sigma_1}L^{p_1'}_tL^{q_1'}_x$. Then 
	\begin{align*}
		\int_{T^-}^{T^+}\langle u, g\rangle_{L^2_x}\,dt & = \int_{T^-}^{T^+} \langle (D_t+\widetilde A^\pm)u, \widetilde K^\pm g\rangle_{L^2_x}\,dt - \int_{T^-}^{T^+}\langle u, [(D_t+\widetilde A^\pm)\widetilde K^\pm-1]g\rangle_{L^2_x}\,dt+\langle u,\widetilde K^\pm g\rangle_{L^2_x}\bigg|^{T^+}_{T_-} \\
		& =  \int_{T^-}^{T^+} \langle (D_t+\widetilde A^\pm)\widetilde K^\pm f, \widetilde K^\pm g\rangle_{L^2_x}\,dt - \int_{T^-}^{T^+}\langle \widetilde K^\pm f, [(D_t+\widetilde A^\pm)\widetilde K^\pm-1]g\rangle_{L^2_x}\,dt \\
        &\qquad\qquad\qquad+\langle \widetilde K^\pm f,\widetilde K^\pm g\rangle_{L^2_x}\bigg|^{T^+}_{T_-}  \\
		& = \int_{T^-}^{T^+} \langle [(D_t+\widetilde A^\pm)\widetilde K^\pm -1] f, \widetilde K^\pm g\rangle_{L^2_x}\,dt  + \int_{T^-}^{T^+}\langle f, \widetilde K^\pm g\rangle_{L^2_x}\,dt \\
		& \qquad - \int_{T^-}^{T^+}\langle \mathcal K^\pm f, [(D_t+\widetilde A^\pm)\widetilde K^\pm-1]g\rangle_{L^2_x}\,dt+\langle \widetilde K^\pm f,\widetilde K^\pm g\rangle_{L^2_x}\bigg|^{T^+}_{T_-}.
	\end{align*}
	Now we use the estimates for $q_1>q_2$:
	\begin{align*}
		\|\widetilde K^\pm g\|_{\langle D_x\rangle^{s+\sigma_2}L^{p_2}_tL^{q_2}_x\cap X^{-s}} & \lesssim \|g\|_{\langle D_x\rangle^{-s-\sigma_1}L^{p_1'}_tL^{q_1'}_x}. %\\
		%\|\mathcal K^\pm f\|_{\langle D_x\rangle}
	\end{align*}
		Then we obtain
	\begin{align*}
		\left| \int_{T^-}^{T^+}\langle \widetilde K^\pm f, g\rangle_{L^2_x}\,dt\right| & \lesssim \|[(D_t+\widetilde A^\pm)\widetilde K^\pm -1] f\|_{Y^s} \|\widetilde K^\pm g\|_{X^{-s}}+\|f\|_{\langle D_x\rangle^{-s-\sigma_2}L^{p_2'}_tL^{q_2'}_x}\|\widetilde K^\pm g\|_{\langle D_x\rangle^{s+\sigma_2}L^{p_2}_tL^{q_2}_x} \\
		& \qquad + \|\widetilde K^\pm f\|_{X^s} \|[(D_t+\widetilde A^\pm)\widetilde K^\pm-1]g\|_{Y^{-s}} + \|\widetilde K^\pm f\|_{L^\infty_tH^s_x} \|\widetilde K^\pm g\|_{L^\infty_tH^{-s}_x} \\
		& \lesssim \|f\|_{\langle D_x\rangle^{-s-\sigma_2}L^{p_2'}_tL^{q_2'}_x}\|g\|_{\langle D_x\rangle^{-s-\sigma_1}L^{p_1'}_tL^{q_1'}_x},
	\end{align*}
    and the claim follows by duality.
\end{proof}
\subsection{Proof of Theorem \ref{thm-strichartz}}
Now we arrive at the proof of Theorem \ref{thm-strichartz}, the Strichartz estimates for the inhomogeneous half-Klein-Gordon equation:
\begin{align*}
(D_t\pm\textrm{Op}(\sqrt{1+g^{ij}(t,x)\xi_i\xi_j})+A'(t,x,D_x))u = f+g, \quad u|_{t=0}= u_0\in H^s_x(\mathbb R^d),    
\end{align*}
where $A'$ is the lower order terms satisfying the error-type estimates and $f\in \langle D_x\rangle^{-s-\sigma_2}L^{p_2'}_tL^{q_2'}_x$ and $g\in Y^s$.
%In advance of the proof of Theorem \ref{thm-strichartz} we first consider the Cauchy problems for the inhomogeneous problem
%\begin{align*}
%	(D_t+ A^\pm)u &= f+g,\quad u|_{t=0}= u_0 \in H^s_x(\mathbb R^d).
%\end{align*}
We want to control the $\langle D_x\rangle^{-s+\sigma_1}L^{p_1}_tL^{q_1}_x\cap X^s$-norm of the solution $u$. To do this, we first rewrite the above equation as follows:
\begin{align*}
    (D_t+\widetilde A^\pm) u = f+g+ (\widetilde A^\pm - A^\pm)u.
\end{align*}
Then we write
\begin{align*}
	u = \widetilde K^\pm f + \phi,
\end{align*}
where $\phi$ solves the equation
\begin{align*}
	(D_t+\widetilde A^\pm)\phi = [1-(D_t+\widetilde A^\pm)\widetilde K^\pm]f+g+(\widetilde A^\pm - A^\pm)u  .
\end{align*}
Then we see that
\begin{align*}
	\|u\|_{\langle D_x\rangle^{-s+\sigma_1}L^{p_1}_tL^{q_1}_x\cap X^s} & \lesssim \|\widetilde K^\pm f\|_{\langle D_x\rangle^{-s+\sigma_1}L^{p_1}_tL^{q_1}_x\cap X^s}+ \|\phi\|_{\langle D_x\rangle^{-s+\sigma_1}L^{p_1}_tL^{q_1}_x\cap X^s} .
\end{align*}
Then we apply in order Proposition \ref{prop-2-pq-dual}, the localised energy estimates Theorem \ref{thm-loc-en-est}, and the error-type estimates for the parametrix $\widetilde{K}^\pm$ to get 
\begin{align*}
	& \quad \|\phi\|_{\langle D_x\rangle^{-s+\sigma_1}L^{p_1}_tL^{q_1}_x\cap X^s}  \\
    & \lesssim \|\phi\|_{L^\infty_tH^s_x\cap X^s} + \|(D_t+\widetilde A^\pm)\phi\|_{Y^s} \\
	& \lesssim \|\phi(0)\|_{H^s}+\|(D_t+\widetilde A^\pm)\phi\|_{L^1_tH^s_x+Y^s} + \|[1-(D_t+\widetilde A^\pm)\widetilde K^\pm]f\|_{Y^s} +\|g\|_{Y^s}+ \|(\widetilde A^\pm - A^\pm)u\|_{Y^s} \\
	& \lesssim \|u_0\|_{H^s_x}+\|(D_t+\widetilde A^\pm)\phi\|_{Y^s}+ \|f\|_{\langle D_x\rangle^{-s-\sigma_2}L^{p_2'}_tL^{q_2'}_x}+\|g\|_{Y^s} +\epsilon\|u\|_{X^s} \\
	& \lesssim \|u_0\|_{H^s_x}+ \|f\|_{\langle D_x\rangle^{-s-\sigma_2}L^{p_2'}_tL^{q_2'}_x}+\|g\|_{Y^s}+\epsilon\|u\|_{X^s} ,
\end{align*}
where we used Proposition \ref{prop-mollified-operator} to control the error term $\widetilde{A}^\pm u-A^\pm u$.
Hence we obtain
\begin{align}\label{stri-prin}
	\|u\|_{\langle D_x\rangle^{-s+\sigma_1}L^{p_1}_tL^{q_1}_x\cap X^s} & \lesssim \|u_0\|_{H^s_x}+ \|(D_t+\widetilde A^\pm)u\|_{\langle D_x\rangle^{-s-\sigma_2}L^{p_2'}_tL^{q_2'}_x+Y^s} .
\end{align}
Now we apply the above Strichartz estimates to the original half-Klein-Gordon equation. We see that
\begin{align*}
    \|u\|_{\langle D_x\rangle^{-s+\sigma_1}L^{p_1}_tL^{q_1}_x\cap X^s} & \lesssim \|u_0\|_{H^s_x}+ \|(D_t+\widetilde A^\pm)u\|_{\langle D_x\rangle^{-s-\sigma_2}L^{p_2'}_tL^{q_2'}_x+Y^s} \\
    & \lesssim \|u_0\|_{H^s_x}+\|A'u\|_{\langle D_x\rangle^{-s-\sigma_2}L^{p_2'}_tL^{q_2'}_x+Y^s}+\|f+g\|_{\langle D_x\rangle^{-s-\sigma_2}L^{p_2'}_tL^{q_2'}_x+Y^s} \\
    & \lesssim \|u_0\|_{H^s_x}+\|A'u\|_{Y^s}+\|f\|_{\langle D_x\rangle^{-s-\sigma_2}L^{p_2'}_tL^{q_2'}_x}+\|g\|_{Y^s} \\
    & \lesssim \|u_0\|_{H^s_x}+\|f\|_{\langle D_x\rangle^{-s-\sigma_2}L^{p_2'}_tL^{q_2'}_x}+\|g\|_{Y^s} +\epsilon \|u\|_{X^s},
\end{align*}
where the term $\epsilon\|u\|_{X^s}$ can be absorbed into the left hand-side provided that $\epsilon$ is sufficiently small.

\subsection{Proof of Theorem \ref{thm-dirac-reform}} As a direct application of Theorem \ref{thm-strichartz}, we establish the global well-posedness for the cubic Dirac equation. Suppose that $\|\Pi_\pm^M\psi_0\|_{H^s_x}\le\eta$ and $\eta$ is small. Set the spaces $\mathcal X^s_\pm$ and $\mathcal X^s$ given by
\begin{align*}
    \mathcal X^s_\pm := \{ \psi : \|\psi\|_{L^\infty_tH^s_x}+\|\psi\|_{L^2_tL^\infty_x} \le \delta \},
\end{align*}
and
\begin{align*}
    \|\psi\|_{\mathcal X^s} := \|\Pi_+^M\psi\|_{\mathcal X^s_+}+\|\Pi_-^M\psi\|_{\mathcal X^s_-}.
\end{align*}
We shall show that the solution $\psi$ lies in $\mathcal X^s$. To control $L^2_tL^\infty_x$-norm of $\Pi_\pm^M\psi$, we apply Theorem \ref{thm-strichartz} with $\theta=\varepsilon\ll1$ after the use of the Sobolev embedding to obtain
\begin{align*}
    \|\Pi_\pm^M\psi\|_{L^2_tL^\infty_x} & \lesssim \|\Pi_\pm^M\psi\|_{\langle D_x\rangle^{-\frac{3}{q}}L^2_tL^q_x} \lesssim \|\Pi_\pm^M\psi_0\|_{H^s_x}+\|\Pi_\pm^M\Pi_\pm^M [(\psi^\dagger\psi)\psi]\|_{L^1_tH^s_x},
\end{align*}
where $(2,q)$ is the Strichartz admissible pair satisfying \eqref{strichartz-admissible-pair} with $\theta=\varepsilon$. Here $s>1$ is close to $1$. 
Then
\begin{align*}
    \|\psi\|_{\mathcal X^s_\pm} & = \|\Pi_\pm^M\psi\|_{L^\infty_tH^s_x}+\|\Pi_\pm^M\psi\|_{L^2_tL^\infty_x} \\
    & \lesssim \|\Pi_\pm^M\psi_0\|_{H^s_x} + \|\Pi_\pm^M\Pi_\pm^M [(\psi^\dagger\psi)\psi]\|_{L^1_tH^s_x}.
 \end{align*}
 Since the operators $\Pi_\pm^M$ are of the class $OPS^0$, we see that
 \begin{align*}
     \|\Pi_\pm^M\Pi_\pm^M [(\psi^\dagger\psi)\psi]\|_{L^1_tH^s_x} & \lesssim \| (\psi^\dagger\psi)\psi\|_{L^1_tH^s_x} \\
     & \lesssim \sum_{\pm_1,\pm_2,\pm_3\in \{+,-\}} \| [(\Pi^M_{\pm_1}\psi)^\dagger\Pi^M_{\pm_2}\psi]\Pi_{\pm_3}^M\psi \|_{L^1_tH^s_x} \\
     & \lesssim \sum_{\tau} \|\Pi_{\pm_{\tau(1)}}^M\psi\|_{L^\infty_tH^s_x}\|\Pi_{\pm_{\tau(2)}}^M\psi\|_{L^2_tL^\infty_x}\|\Pi_{\pm_{\tau(3)}}^M\psi\|_{L^2_tL^\infty_x} \\
     & \lesssim 8\|\psi\|_{\mathcal X^s}^3,
 \end{align*}
 where $\tau:\{1,2,3\}\rightarrow \{1,2,3\}$ are permutations. Then we have
 \begin{align*}
     \|\psi\|_{\mathcal X^s} &= \|\Pi_+^M\psi\|_{\mathcal X^s_+}+ \|\Pi_-^M\psi\|_{\mathcal X_-^s}  \\
     & \lesssim \|\Pi_+^M\psi_0\|_{H^s_x}+\|\Pi_-^M\psi_0\|_{H^s_x}+16\|\psi\|_{\mathcal X^s} \\
     & \le 2\eta+ 16\delta^3.
 \end{align*}
 If we choose $\delta<\frac18$, then $\psi\in\mathcal X^s$ since we assume that $\eta$ is small. Now we are concerned with the scattering. Let $\psi$ be the solution so that $\Pi_\pm^M\psi\in\mathcal X_\pm^s$. It suffices to prove that $\Pi_\pm^M\psi(t)$ is a Cauchy-sequence in $H^s$ as $t\to \infty$, i.e.\ the quantity
 \begin{align*}
     \|\Pi_\pm^M\psi(t)-\Pi_\pm^M\psi(t')\|_{H^s_x}
 \end{align*}
 is arbitrarily small provided that $t>t'>T$ for some large $T>0$. We first write
 \begin{align*}
     \Pi_\pm^M\psi = \widetilde K^\pm[\Pi_\pm^M\Pi_\pm^M((\psi^\dagger\psi)\psi)]+\phi^\pm,
 \end{align*}
 where $\phi^\pm$ is an error of the parametrix. Then we use Theorem \ref{thm-strichartz} to get
 \begin{align*}
      \|\Pi_\pm^M\psi(t)-\Pi_\pm^M\psi(t')\|_{H^s_x} & \lesssim \|\Pi_\pm^M\Pi_\pm^M[(\psi^\dagger\psi)\psi]\|_{L^1_{[t',\infty)}H^s_x} \\
      & \lesssim \|(\psi^\dagger\psi)\psi]\|_{L^1_{[t',\infty)}H^s_x} \\
      & \lesssim \|\psi\|_{L^2_{[t',\infty)}L^\infty_x}^2\|\psi\|_{L^\infty_tH^s_x}.
 \end{align*}
 Since $\psi$ is the solution, we have $\|\psi\|_{L^2_tL^\infty_x}\le \|\Pi_+^M\psi\|_{L^2_tL^\infty_x}+\|\Pi_-^M\psi\|_{L^2_tL^\infty_x}<\infty$, and hence we can choose a large $T>0$ so that the quantity $\|\psi\|_{L^2_{[t',\infty)}L^\infty_x}$ is arbitrarily small for any $t'>T$. This completes the proof of Theorem \ref{thm-dirac-reform}. %and hence Theorem \ref{thm-dirac-gwp}.
%%%%%%%%%%%%%%%%%%%%%%%%%%%%%%%%%%%%%%%%%%%%%%%%%
\section{Microlocal analysis and phase space transforms}\label{sec:micro}
This section is devoted to a brief introduction of the phase space transforms and preliminaries for microlocal analysis, which will be used for the outgoing parametrix construction later. We refer the readers to \cite{tataru3} for an expository introduction and also Section 6,7 of \cite{tataru4} and Section 9,10 of \cite{metataru}.

Throughout this section, we often use functions in the Schwartz class $\mathscr S(\mathbb R^d)$, which is defined by the set of all smooth functions $f$ satisfying the inequality $\sup_{x}|x^\alpha\partial^\beta_xf(x)|<\infty  $.
We also use Schwartz functions $u\in \mathscr S(T^*\mathbb R^d)$ defined on the phase space. Due to a suitable diffeomorphism between $T^*\mathbb R^d$ and $\mathbb R^d\times\mathbb R^d$, we abuse notation and simply write $T^*\mathbb R^d=\mathbb R^d\times\mathbb R^d$.
\subsection{Bargman transforms} We begin with the introduction of the Bargman transform, which is a phase space transform.
\begin{defn}
	The Bargman transform $T$ of a tempered distribution $f\in\mathscr S'(\mathbb R^d)$ is defined by 
	\begin{align}
	Tf(x,\xi) = c_d \int_{\mathbb R^d}e^{-\frac12|x-y|^2}e^{i\xi\cdot(x-y)}f(y)\,dy,	
	\end{align}
where $c_d=2^{-\frac{d}{2}}\pi^{-\frac34d}$, as in \cite{tataru3}.
\end{defn}
We refer the readers to \cite{tataru3} for several properties of the Bargman transform $T$ in details. Here we only remark that the transform $T$ is unitary, i.e., $T^*T=I$. 
%\begin{prop}[Proposition 2.1 of \cite{tataru3}]
%The Bargman transform $T$ satisfies 
%\begin{align*}
%	T^*T= I,
%\end{align*}
%where $I$ is the identity map and $T^*$ is the adjoint of $T$ defined by 
%\begin{align*}
%	T^*u(y) = c_d \int_{\mathbb R^d}e^{-\frac12|x-y|^2}e^{-i\xi\cdot(x-y)} u(x,\xi)\,dxd\xi.
%\end{align*}	
%Thus the Bargman transform $T$ is an isometry from $L^2(\mathbb R^d)$ into $L^2(\mathbb R^d\times \mathbb R^d)$.
%\end{prop}
%We note that the Bargman transform $T$ is not surjective. The image of the map $T$ on $L^2$ satisfies the Cauchy-Riemann type relation
%\begin{align}
%i\partial_\xi Tf = (\partial_x-i\xi)Tf	.
%\end{align}
Given a tempered distribution $a\in\mathscr S'(\mathbb R^d\times\mathbb R^d)$, we define the corresponding Weyl operator $a^w$ as an operator mapping $a^w:\mathscr S(\mathbb R^d)\rightarrow\mathscr S'(\mathbb R^d)$ given by
\begin{align*}
	a^w(x,D_x)f(x) = \int a\left(\frac{x+y}{2},\xi\right)e^{i\xi\cdot(x-y)}\,d\xi.
\end{align*} 
We also introduce the distance function $d:T^*\mathbb R^d\times T^*\mathbb R^d\rightarrow \mathbb R^+$ on the phase space defined by
\begin{align*}
	d((x_1,\xi_1),(x_2,\xi_2))^2 = |x_1-x_2|^2+|\xi_1-\xi_2|^2. 
\end{align*}
We define the symbol classes $S^m_{\rho,\delta}$ adapted to the literature \cite{hoermander}.
\begin{defn}
If $m$ is a real number then $S^m_{\rho,\delta}(\mathbb R^d\times\mathbb R^d)$ is the set of all $a\in C^\infty(\mathbb R^d\times\mathbb R^d)$ such that for all multi-indices $\alpha,\beta\in\mathbb N^d$ the function $a$ satisfies the bound
\begin{align*}
	|\partial^\alpha_x\partial^\beta_\xi a(x,\xi)| \le c_{\alpha\beta}(1+|\xi|)^{m-\rho|\beta|+\delta|\alpha|}, \quad x,\xi\in\mathbb R^d,
\end{align*}	
where $0\le\rho,\delta\le1$. 
We also denote by $OPS^{m}_{\rho,\delta}$ the corresponding class of symbols in $S^m_{\rho,\delta}$.
\end{defn}
%\noindent In particular, we only need several sub-classes of the symbol class $S^0_{00}$, whose symbols satisfy the bound
%\begin{align*}
%	|\partial_x^\alpha\partial_\xi^\beta a(x,\xi)| \le c_{\alpha\beta}, \quad |\alpha|+|\beta|\ge k,
%\end{align*}
%for some non-negative integer $k$, which will be clarified later.
Given a pseudodifferential operator in the Weyl calculus $A^w\in OPS^0_{00}$, we define its phase space image to be the conjugated operator $TAT^*$. The kernel of the phase space image is called the phase space kernel of the operator $A^w$. 
In order to go beyond a fixed-time scale, we introduce the rescaled version $T_{\frac1s}$ of the Bargman transform $T$ so called the FBI transform:
\begin{align*}
	T_{\frac1s}u(t,x,\xi) = c_ds^{-\frac d4}\int_{\mathbb R^d}e^{-\frac{|x-y|^2}{2s}} e^{i\xi\cdot(x-y)}u(t,y)\,dy.
\end{align*}
As the Bargman transforms hold $T^*T=I$, we still have $T^*_{\frac1s}T_{\frac1s}=I$. %and the image of the map $T_{\frac1s}$ satisfies the rescaled version of the Cauchy-Riemann type equation:
%\begin{align}
%\frac is\partial_\xi T_{\frac1s}	 = (\partial_x-i\xi)T_{\frac1s}.
%\end{align}
Corresponding to this scaling, the distance function $d$ is now rescaled in $t^{\frac12}\times t^{-\frac12}$-scale as 
\begin{align*}
	d_t((x,\xi),(y,\eta))^2 = t^{-1}|x-y|^2+t|\xi-\eta|^2.
\end{align*}
%We observe that all above results are transferred to the rescaled version according to this scaling. For example, the time-dependent operator $A:\mathscr S(\mathbb R^d)\rightarrow\mathscr S'(\mathbb R^d)$ is in $OPS^0_{00}$ if and only if its phase space kernel $K$ satisfies the rapid decay on the $t^{\frac12}\times t^{-\frac12}$-scale away from the diagonal of the phase space $T^*\mathbb R^d$ as  
%\begin{align*}
%|	K(x_2,\xi_2,x_1,\xi_1)| \le c_N (1+d_t((x_2,\xi_2),(x_1,\xi_1)))^{-N},
%\end{align*}
%for any $N>0$. 
\subsection{A long time phase space parametrix}
As \cite{metataru,tataru4} we are concerned with the following global-in-time evolution equation:
\begin{align}\label{global-evol-eq}
(D_t+a^w(t,x,D_x)-ib^w(t,x,D_x)+c^w(t,x,D_x))u = 0, \ t>0	,
\end{align}
with the symbols $a,b,c$ on $\mathbb R^+\times T^*\mathbb R^d$. In other words, we allow the symbols $a,b,c$ equipped with time-dependent scales. 
\begin{defn}\label{def-symbol-k}
We define the symbol class $S^{(k)}_{}$ to be the set of all symbols on $T^*\mathbb R^d$ satisfying the bound
\begin{align*}
	|\partial^\alpha_x\partial^\beta_\xi a(x,\xi)| \le c_{\alpha\beta}, \quad |\alpha|+|\beta|\ge k.
\end{align*}
%We say that the symbol $a$ is of the class $\epsilon S^{0,(k)}_{00}$, when the bound $c_{\alpha\beta}$ is replaced by $\epsilon>0$.\\
Furthermore, we define time-dependent pseudodifferential operators in $\mathbb R\times T^*\mathbb R^d$ with the corresponding symbol classes $\ell^1S^{(k)}_{}$ whose symbols satisfy the bound
\begin{align*}
	\sum_{j \in \mathbb Z }2^{j(\frac{|\alpha|-|\beta|}{2}+1)}\|\partial^\alpha_x\partial^\beta_\xi a(t,x,\xi)\|_{L^\infty(\{t\approx 2^j\})} \le c_{\alpha\beta}, \quad |\alpha|+|\beta|\ge k.
\end{align*}
%The symbol class $\epsilon\ell^1 S^{0,(k)}_{00}$ is defined in the similar way.
We denote by $\ell^1 S^{(2)}_{\epsilon}$ the subclass of $\ell^1 S^{(k)}_{}$ whose seminorms are $O(\epsilon)$ when $|\alpha|+|\beta|=2$.
\end{defn}
%\noindent In the sequel, we shall use the above definition for the special cases: $k=1,2$.
%In particular, when $k=2$, using the notation $\epsilon(t)$ we denote by $\ell^1S^{(2)}_{\epsilon}$ the subset of the symbol class $\ell^1S^{(k)}_{}$ whose elements satisfy the bound:
%\begin{align*}
%	|\partial^\alpha_x\partial^\beta_\xi a(t,x,\xi) \lesssim \epsilon(t)t^{\frac{|\beta|-|\alpha|}{2}-1}, \quad |\alpha|+|\beta| = 2,
%\end{align*}
%or equivalently, 
%\begin{align*}
%	|\partial^\alpha_x\partial^\beta_\xi a(t,x,\xi) \lesssim \epsilon(t)t^{-|\alpha|},\quad |\alpha|\le2.
%\end{align*}
%On the other hand, when $k=1$, any symbols $b$ in the class $\ell^1S^{0,(1)}_{00}$ satisfy the bound
%\begin{align*}
%	|\partial_x^\alpha\partial_\xi^\beta b(t,x,\xi)| \le c_{\alpha\beta} t^{\frac{|\beta|-|\alpha|}{2}-1}, \quad |\alpha|+|\beta|=1,
%\end{align*}
%or equivalently, we have
%\begin{align*}
%	|\partial_x^\alpha\partial_\xi^\beta b(t,x,\xi)| \lesssim t^{-\frac12-|\alpha|}, \quad |\alpha|\le1.
%\end{align*}
We consider the evolution equations of the form \eqref{global-evol-eq} with real symbols $a\in \ell^1S^{(2)}_{\epsilon}$ and $b\in \ell^1S^{(1)}_{}$ with $b\ge0$ and a possibly complex symbol $c\in\ell^1 S^{(0)}_{}$. In what follows, the symbol $a$ will derive the evolution, while the symbol $b$ play a role as a damping term and $c$ can be viewed as a bounded error. We would like to mention further that the symbols $a,b,c$ here are not frequency-localised. 
%\begin{prop}[Proposition 28 of %\cite{tataru4}]
%Assume that the symbols $a\in \ell^1S^{0,(2)}_{00}$ and $b\in\ell^1S^{0,(1)}_{00}$ are real with $b\ge0$, while $c\in\ell^1S^{0}_{00}$ is a possibly complex symbol. Then the evolution equation \eqref{global-evol-eq} is forwarded well-posed in $L^2(\mathbb R^d)$ and the corresponding evolution operators $S(t,s)$ defined by
%\begin{align*}
%	S(t,s)u(s) = u(t), \ 0<s<t
%\end{align*}	
%are $L^2$-bounded operators. More %precisely, we have
%\begin{align*}
%	\|S(t,s)\|_{L^2\rightarrow L^2} \lesssim 1, 
%\end{align*}
%for any $0<s<t$. 
%\end{prop}
%Note that the smallness assumptions on the symbol $a$ is not required to establish the $L^2$-boundedness and well-posedness. 
We consider the Hamilton flow of the evolution $D_t+a^w$ and we denote its evolutions by $\chi(t,s)$. We use the notation $t\mapsto (x_t,\xi_t)$ to denote the trajectories of the flow. 
%\begin{prop}[Equation (73) of \cite{tataru4}]
%	If $a\in\epsilon\ell^1S^{0,(2)}_{00}$ with a sufficiently small $\epsilon>0$ and $0<s<t$, then the Hamilton flow $\chi(t,s)$ is Lipschitz-continuous. More precisely, we have the following:
%	\begin{align}
%	\frac{\partial(x_t,\xi_s)}{\partial(x_s,\xi_t)} := \begin{bmatrix}
%		\frac{\partial x_t}{\partial x_s} & \frac{\partial x_t}{\partial\xi_t} \\ \frac{\partial \xi_s}{\partial x_s} & \frac{\partial\xi_s}{\partial\xi_t}
%	\end{bmatrix}	 = \begin{bmatrix}
%		\mathbf I_d+\epsilon O(1) & \epsilon O(t) \\ \epsilon O(\frac1s) & \mathbf I_d+\epsilon O(1)
%	\end{bmatrix},
%	\end{align}
%where $\mathbf I_d$ is the $d\times d$-identity matrix.
%\end{prop}
We define the function $\psi$ by the integration of the symbol $b$ along the Hamilton flow:
\begin{align*}
	\psi(t,x_t,\xi_t) = \int_1^t b(\sigma,x_\sigma,\xi_\sigma)\,d\sigma.
\end{align*}
We set the lower limit of the integration above to be $1$. In our practical analysis, we only consider the differences $\psi(t,x_t,\xi_t)-\psi(s,x_s,\xi_s)$. %We state the Lipschitz regularity of the function $\psi$.
%\begin{prop}[Proposition 30 of \cite{tataru4}]
%	If $a\in\epsilon\ell^1S^{0,(2)}_{00}$ with a sufficiently small $\epsilon>0$, $b\in\ell^1S^{0,(1)}_{00}$ and $s<t$ then
%	\begin{align}
%	\frac{\partial(\psi(x_t,\xi_t)-\psi(x_s,\xi_s))}{\partial(x_s,\xi_t)} = \begin{bmatrix}
%		O(s^{-\frac12}) & O(t^\frac12)
%	\end{bmatrix}	.
%	\end{align}
%\end{prop}
We end this section with the pointwise bound for the phase space kernels of the evolution operator $S(t,s)$. We refer the readers to \cite{tataru4} for the proof.
\begin{thm}[Theorem 31 of \cite{tataru4}]\label{kernel-bdd}
	Let $a\in\ell^1S^{(2)}_{\epsilon},\, b\in \ell^1S^{(1)}_{}$ be real symbols with $b\ge0$ and $c\in\ell^1 S^{(0)}_{}$. Then for $0<s<t$ the kernels $K(t,s)$ of the conjugated operators $T_{\frac1t}S(t,s)T^*_{\frac1s}$ satisfy the bound
	\begin{align}
	\begin{aligned}
		|K(t,x,\xi_t,s,x_s,\xi)| \lesssim \left( \frac ts \right)^{\frac d4} \left( 1+(\psi(x_t,\xi_t)-\psi(x_s,\xi_s))^2+t^{-1}|x-x_t|^2 + s|\xi-\xi_s|^2 \right)^{-N},
	\end{aligned}
	\end{align}
	for any $N>0$.
\end{thm}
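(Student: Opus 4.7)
The plan is to analyze the kernel $K(t,s)$ by working directly in the phase-space picture via the FBI transform, and to propagate Gaussian weighted $L^2$ bounds at the natural scales $t^{1/2}$ in $x$ and $t^{-1/2}$ in $\xi$. Concretely, if $u$ solves the evolution equation, then $v(t) = T_{1/t} u(t)$ satisfies an equation on phase space of the form $(D_t + \mathcal{A}) v = 0$, where $\mathcal{A}$ is obtained by conjugating $a^w - i b^w + c^w$ with $T_{1/t}$. Because the FBI transform intertwines Weyl quantization with multiplication modulo a smoothing remainder (up to the time-dependent rescaling implicit in $T_{1/t}$), the operator $T_{1/t} a^w T^*_{1/t}$ acts, to leading order, as the Hamilton vector field $H_a$ on phase-space functions, $T_{1/t} b^w T^*_{1/t}$ acts as multiplication by $b$, and $T_{1/t} c^w T^*_{1/t}$ is a bounded zero-order error. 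In particular, the principal dynamics is transport along $\chi(t,s)$.

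Given this, the strategy has three main steps. First, I would absorb the damping by introducing the weighted unknown $w = e^{\psi} v$, with $\psi$ defined so that $\partial_t \psi = b$ along Hamilton trajectories; then the damping term cancels in the equation for $w$, leaving a transport-type equation with bounded perturbation. A standard $L^2$ energy estimate gives $\|w(t)\|_{L^2} \lesssim \|w(s)\|_{L^2}$, which in the original variables already contains the exponential gain $e^\psi$. Second, I would propagate weighted bounds using the phase-space vector fields $W_1 = t^{-1/2}(x - x_t)$ and $W_2 = t^{1/2}(\xi - \xi_s)$, which are adapted to the concentration scale of a Bargman basis element. These almost commute with $H_a$: the commutators $[H_a, W_j]$ are controlled by second derivatives of $a$, which are $O(\epsilon)$ and integrable in time thanks to $a \in \ell^1 S^{(2)}_\epsilon$. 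Iterating gives weighted $L^2$ bounds on $w$ of any polynomial order. Third, because the Bargman image of any $L^2$ function is analytic-type on phase space, weighted $L^2$ bounds transfer to pointwise bounds via a cheap Sobolev-type embedding on phase space, at the cost of the factor $(t/s)^{d/4}$ that reflects the rescaling of the FBI transform between times $s$ and $t$.

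The hard part will be the careful bookkeeping of the commutators. On one side, $[H_a, W_1]$ and $[H_a, W_2]$ produce terms containing Hessians of $a$ evaluated along the flow, which must be controlled by the $\ell^1 S^{(2)}_\epsilon$ seminorms after integration over $[s,t]$; the $\ell^1$ summability over the dyadic time scales in Definition \ref{def-symbol-k} is exactly what makes this uniform. On the other side, commuting $W_j$ through the weighted unknown $e^\psi v$ generates factors of derivatives of $\psi$ along the flow, and after $N$ commutations one extracts exactly the quantity $\psi(x_t,\xi_t) - \psi(x_s,\xi_s)$ raised to the power $N$, which accounts for the $\psi$-dependent factor in the stated bound. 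Balancing the smallness of $\epsilon$ against the combinatorics of $N$-fold commutation, and verifying that the transport estimate for $w$ closes without losses in $(t/s)$ beyond the single factor $(t/s)^{d/4}$, is the delicate point; once the weighted $L^2$ bounds are established for arbitrary $N$, the pointwise bound follows by summation.
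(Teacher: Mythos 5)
The paper does not prove this statement; it quotes Theorem 31 of \cite{tataru4} and refers the reader there. Your overall strategy --- FBI conjugation, reduction to transport on phase space, absorbing the damping via the $e^\psi$ weight, and propagating weighted $L^2$ bounds adapted to the scales $t^{1/2}\times t^{-1/2}$ --- does mirror the architecture in \cite{tataru4}, so the outline is sound.

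There are, however, concrete gaps. First, your mechanism for producing the factor $(1+(\psi(x_t,\xi_t)-\psi(x_s,\xi_s))^2)^{-N}$ is wrong: you claim it comes from ``commuting $W_j$ through $e^\psi v$,'' but $W_j$ and $e^\psi$ are both multiplication operators on phase space and commute exactly, so no such factor can arise from a commutator. The correct mechanism is simpler and does not involve the $W_j$ at all: the uniform $L^2$ bound for $w=e^\psi v$, once the weight is undone, yields $|v|\lesssim e^{-(\psi(x_t,\xi_t)-\psi(x_s,\xi_s))}$, and since $b\ge0$ this exponent is nonnegative, so exponential decay dominates any polynomial. Second, $T_{1/t}$ is time-dependent; conjugating $D_t$ therefore produces a dilation-type term on phase space, roughly proportional to $\frac{1}{2t}(x\cdot\partial_x-\xi\cdot\partial_\xi)$ plus a scalar, in addition to the conjugated symbol. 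This term is precisely what makes the natural concentration scales evolve with $t$, and it must enter both the transport equation for $w$ and the computation of $\partial_t W_j+H_a W_j$; without it the propagation of weighted bounds does not close. Third, the passage from weighted $L^2$ bounds to pointwise bounds is not a ``cheap Sobolev embedding'': the Bargman image satisfies a Cauchy--Riemann type relation after factoring out the Gaussian, and obtaining the pointwise bound together with the correct prefactor $(t/s)^{d/4}$ requires exploiting that structure at the rescaled scales. These omitted steps are the substance of the argument in \cite{tataru4}.
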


%%%%%%%%%%%%%%%%%%%%%%%%%%%%%%%%%%%%%%%%%%%%%%%%%%
\section{The parametrix construction}\label{sec:para}
We finally arrive at the proof of the properties of the outgoing parametrix $\mathcal K^\pm_{(k)}$ for the operators $D_t+\mathcal A^\pm_{(k)}$, Proposition \ref{prop-parametrix}.
The principal result of this section is the dispersive inequality \eqref{pointwise-decay}.

%We first recall the half-Klein-Gordon equation of the form:
%\begin{align*}
%(D_t+A(t,x,D_x))    u = 0.
%\end{align*}
%From now on, we consider an evolution equation of a similar form as \eqref{global-evol-eq} in the frequency-localised setting.
%The assumptions \eqref{as1-symbol}, \eqref{as2-weak-asymp1}, and \eqref{as3-weak-asymp2} imply that
%\begin{align*}
 %   A^\pm_\lambda(t,x,\xi)-\pm\sqrt{\lambda^{-2}+|\xi|^2} \in \epsilon\ell^1S^{0,(2)}_{00},\quad |\xi|\approx1, \ |x|\approx t.
%\end{align*}
Recall that Proposition \ref{prop-mollified-operator} allows us to study the globally mollified operator $\widetilde{A}^\pm$ instead of $A^\pm$. Each dyadic piece $A^\pm_{(k)}$ of the operator $\widetilde{A}^\pm$ satisfies frequency localisation up to an acceptable error. After the scaling arugment $(t,x)\to (\lambda^{-1}t,\lambda^{-1}x) $ done in Section \ref{sec:scaling}, the analysis of the localised operator $A^\pm_{(k)}$ has been reduced to $\mathcal A^\pm_{(k)}$, which is localised at the unit frequency scale $|\xi|\approx1$, where we choose $\lambda=2^k$. However, we keep a parameter dependence resulting from the lack of a precise scale-invariance.

%%%%%%%%%%%%%%%%%%%%%%%%%
\subsection{Reduction to Weyl quantization}
We will construct the phase-space parametrix of the scaled operator $D_t+\mathcal A^\pm_{(k)}$ in the Weyl calculus as \cite{metataru,tataru4}. Thus we need to replace the standard quantization by the Weyl quantization. More precisely, for each $\lambda=2^k$, we put $\mathcal A^+_{(k)}=\mathcal A_{(k)}=\mathrm{Op}_y(a_{(k)}(s,y,D_y))$. Then we have
\begin{align}
    \mathrm{Op}_y(a_{(k)}(s,y,\eta)) = a^w_{(k)}(s,y,D_y) +R_k(s,y,D_y),
\end{align}
where $a^w_{(k)}(s,y,D_y)$ is the Weyl-Operator with the same symbol and $R_k$ is a lower-order operator. In particular, by the change of quantization formula, 
\begin{align}
    R_k = \mathrm{Op}_y(r_k(s,y,\eta)), 
\end{align}
where
\begin{align}
    r_k = \frac{i}{2} \partial_y\cdot\partial_\eta a_{(k)}+ \sum_{|\alpha|\ge2} \frac1{\alpha!} \left( \frac i2\right)^{|\alpha|} \partial_y^\alpha\partial_\eta^\alpha a_{(k)}.
\end{align}
We refer to Theorem 18.5.10. of \cite{hoermander}.
Then the bound \eqref{bdd-mollification} yields 
\begin{align}
    |r_k| \lesssim \epsilon_k (1+|y|)^{-1},
\end{align}
which ensures $R_k = \mathrm{Op}^w(\frac i2 \partial_y\cdot\partial_\eta a_k) +\mathrm{Op}^{w}S^{-1} $. Thus $R_k$ is an acceptable lower-order error terms. We also remark that the principal symbols of $\mathrm{Op}(a_{(k)}) $ and $a_{(k)}^w$ are identical.

%Therefore, we consider the evolution equation for $\lambda\ge1$

%\begin{align}\label{main-evol-eq}
%	(D_t \pm\langle D_x\rangle_\lambda+(\mathfrak A_\lambda^\pm)^w(t,x,D_x)-i(\mathfrak B_\lambda^\pm)^w(t,x,D_x))u = 0,
%\end{align} 
%where $\mathfrak A_\lambda^\pm\in\ell^1S^{(2)}_{\epsilon},\ \mathfrak B^\pm_\lambda\in\ell^1S^{(1)}_{}$ are real symbols, both localised on the annulus $|\xi|\approx1$ in the frequency space, with $\mathfrak B^\pm_\lambda\ge0$. Here $\langle D_x\rangle_\lambda=\sqrt{\lambda^{-2}+|D_x|^2}$. 

%We note that $\mathcal A^\pm_{(k)} \in \ell^1 S^{(2)}_{\epsilon}$.

%These localisation assumptions guarantee that if the initial data $u(t_0)$ is localised at frequency $2^{-10}\le|\xi|\le 2^{10}$ then the solution $u(t)$ is also localised at frequency $2^{-10}\le|\xi|\le 2^{10}$. See also \cite{metataru,tataru4}.
\subsection{A perturbation of the half Klein-Gordon operators}

Now we shall construct a parametrix for the scaled operator $D_t+\mathcal A^+_{(k)}$. We let $\mathcal S^\pm_{(k)}(t,s)$ denote the $L^2$-evolution operator generated by $D_t+\mathcal A^+_{(k)}$. We only treat the case $k\geq 0$ explicitly, the case $k<0$ is in fact easier. 

%Since the operators $\mathcal A^\pm_{(k)}$ are localised in the unit scale, 
Due to the frequency-localisation of the operator $\mathcal A^+_{(k)}$,
we observe that
\begin{align}\label{freq-local-evol-op}
	\mathcal S^\pm_{(k)}(t,s)S_{2^{-10}\le\cdot\le2^{10}} = S_{2^{-10}\le\cdot\le2^{10}}\mathcal S^\pm_{(k)}(t,s) = S_{2^{-10}\le\cdot\le2^{10}}\mathcal S^\pm_{(k)}(t,s)S_{2^{-10}\le\cdot\le2^{10}}.
\end{align}
%and $\mathcal S^\pm_{\le1}(t,s)$ satisfies the analogous frequency-localisation
We refer the readers to Section 10 of \cite{metataru}.
%Then we denote the frequency-localised phase space image of the evolution operators $\mathcal S^\pm_{(k)}(t,s)$ by 
%\begin{align*}
%	\mathfrak S^\pm_{(k)}(t,s) = T_{\frac1t}\mathcal S^\pm_{(k)}(t,s)P_{\frac{1}{4}\le\cdot\le4}T^*_{\frac1s}.
%\end{align*}
\subsection{The parametrix construction}
%Let $\lambda\ge1$. 
We partition the annulus $|\xi|\approx1$ in the phase space
$$
\rho_{-1}(\xi)+ \rho_0(\xi)+\rho_{1}(\xi) =  \sum_\pm \sum_{j\ge0}p_{j}^\pm(x,\xi),
$$
with
\begin{align*}
\textrm{supp }p_{j}^\pm \subset \left\{ 2^{j-1}<|x|<2^{j+1},  \pm\frac{x}{|x|}\cdot\frac{\xi}{|\xi|} \ge -2^{-5} \right\} 	, \quad j\ge1 \\
\textrm{supp }p_{0}^\pm \subset \left\{ |x|<2,  \pm\frac{x}{|x|}\cdot\frac{\xi}{|\xi|} \ge -2^{-5} \right\}.
\end{align*}
We denote the corresponding Weyl operator by $\mathcal P^\pm_{j}(x,D_x)$. \\
%We also define the partition for the disc $|\xi|\le1$ in the phase space given by
%\begin{align*}
%	\rho_{\le1}(\xi) =\sum_{j\ge0}p^\pm_j(x,\xi),
%\end{align*}
%where
%\begin{align*}
%	\textrm{supp }p^\pm_{\le1,j} \subset \left\{ 2^{j-1}<|x|<2^{j+1},  \frac{x}{|x|}\cdot\frac{\xi}{|\xi|} \ge -2^{-4} \right\} 	,  \quad j\ge1 \\
%\textrm{supp }p^\pm_{\le1,0} \subset \left\{ |x|<2,  \frac{x}{|x|}\cdot\frac{\xi}{|\xi|} \ge -2^{-4} \right\}.
%\end{align*}

Now we construct evolution operators $\mathcal S^\pm_{(k)}(t,s)$ associated to a the damped half Klein-Gordon operator $\mathcal{A}^+_{(k)}$, localised on the annulus $\frac12<|\xi|<2$, for $\lambda=2^k$. Then,  we form a parametrix $\mathcal K^+_{(k)}$ for the rescaled operator $D_t+\mathcal A^+_{(k)}$ by setting
\begin{align*}
\mathcal K^+_{(k)}(t,s) = \begin{cases}
 	\sum_{j=0}^\infty \mathcal S^-_{(k)}(t,s)\mathcal P^-_{j}(x,D_x), \quad t<s, \\
 	\sum_{j=0}^\infty \mathcal S^+_{(k)}(t,s)\mathcal P^+_{j}(x,D_x), \quad t>s.
 \end{cases}	
\end{align*} 
%and a parametrix $\mathcal K_{\le1}$ by
%\begin{align*}
%\mathcal K^\pm_{\le1}(t,s) = \begin{cases}
% 	\sum_{j=0}^\infty \mathcal S^-_{\le1}(t,s)\mathcal P^-_{\le1,j}(x,D_x), \quad t<s, \\
% 	\sum_{j=0}^\infty \mathcal S^+_{\le1}(t,s)\mathcal P^+_{\le1,j}(x,D_x), \quad t>s.
% \end{cases}	
%\end{align*} 
Now Proposition \ref{prop-parametrix} follows directly from the following theorem.
\begin{thm}\label{main-thm-parametrix}
For each $s\in\mathbb R$, there is an outgoing evolution operator $\mathcal S^+_{(k)}$ for the half-Klein-Gordon operator $D_t+\mathcal A^+_{(k)}$ in $\{t>s\}$, localised on the annulus $|\xi|\approx1$ in the frequency space and satisfies the following: for $0\le\theta\le1$,
\begin{enumerate}
	\item $L^2$-bound:
	$$
	\|\mathcal S^+_{(k)}(t,s)\|_{L^2\rightarrow L^2}\lesssim1.
	$$
	\item Error estimates:
 \begin{align}\label{error-estimates}
 \begin{aligned}
 \|x^\alpha (D_t+\mathcal A^+_{(k)})\mathcal S^+_{(k)}(t,s)\mathcal P^+_{j}\|_{L^2\rightarrow L^2} \lesssim (2^j+|t-s|)^{-N}, 	\\
 \|x^\alpha D_t(D_t+\mathcal A^+_{(k)})\mathcal S^+_{(k)}(t,s)\mathcal P^+_{j}\|_{L^2\rightarrow L^2} \lesssim (2^j+|t-s|)^{-N}.
 \end{aligned}
 \end{align}
\item Initial data:
$$
\mathcal S^+_{(k)}(s+0,s) = \mathbf 1_{L^2},
$$
where $\mathbf1_{L^2}$ is the identity operator on $L^2$.
\item Outgoing parametrix:\begin{align}\label{outgoing-parametrix}
\| \mathbf 1_{\{|x|<2^{-10}(|t-s|+2^j)\}}\mathcal S^+_{(k)}(t,s)\mathcal P^+_{j}\|_{L^2\rightarrow L^2} \lesssim (2^j+|t-s|)^{-N}.
\end{align}
\item Finite speed:\begin{align}\label{finite-speed}
\| \mathbf 1_{\{|x|>2^{10}(|t-s|+2^j)\}}\mathcal S^+_{(k)}(t,s)\mathcal P_{j}\|_{L^2\rightarrow L^2} \lesssim  (2^j+|t-s|)^{-N}.
\end{align}
\item Frequency localisation:
\begin{align}\label{frequency-localiseation}
\|(\mathbf 1_{L^2}-S_{\frac{1}{16}\le\cdot\le16})\mathcal S^+_{(k)}(t,s)\mathcal P^+_{j}\|_{L^2\rightarrow L^2} \lesssim (2^j+|t-s|)^{-N}.
\end{align}
\item Pointwise decay:
\begin{align}\label{pointwise-decay}
\|\mathcal S^+_{(k)}(t,s)\mathcal P^+_{j}\|_{L^1\rightarrow L^\infty} \lesssim  \lambda^{\theta}(1+|t-s|)^{-\frac{d-1+\theta}{2}}.
\end{align}
\end{enumerate}	
\end{thm}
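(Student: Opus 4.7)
The plan is to adapt the damped-evolution parametrix construction of Metcalfe-Tataru \cite{metataru} and Tataru \cite{tataru4} to the $\lambda$-dependent half-Klein-Gordon setting, with the sharp pointwise decay extended from the $\lambda=1$ case of Xue \cite{xue}. Writing $A^\pm_\lambda = \pm\sqrt{\lambda^{-2}+|\xi|^2} + \mathfrak A^\pm_\lambda$, assumptions \eqref{as2-symbol-small} and \eqref{as3-symbol-decay} place $\mathfrak A^\pm_\lambda$ in $\ell^1 S^{(2)}_\epsilon$ uniformly in $\lambda\ge 1$. I would introduce a nonnegative damping symbol $\mathfrak B^\pm_\lambda\in \ell^1 S^{(1)}$, supported (for $t\approx 2^k$) in a neighborhood of the backward cone $\{\pm x\cdot\xi\le -2^{-6}|x||\xi|,\, |x|\approx t\}$ and bounded below by $c\langle t\rangle^{-1}$ there, and define $\mathcal S^\pm_\lambda(t,s)$ as the $L^2$-evolution of \eqref{main-evol-eq} restricted via \eqref{freq-local-evol-op} to the frequency annulus $\{1/4\le|\xi|\le 4\}$. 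The evolution $\mathcal S^\pm_{\le 1}$ is constructed by the identical recipe on the unit disc.

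Granted this setup, properties (1), (3), and (6) follow from the standard energy estimate for the damped equation using $\mathfrak B^\pm_\lambda\ge 0$, together with the fact that $\pm\langle D_x\rangle_\lambda$ commutes with Littlewood-Paley projections and $(\mathfrak A^\pm_\lambda)^w, (\mathfrak B^\pm_\lambda)^w$ perturb frequency localization by $\ell^1$-summable $S^{(1)}$-symbols. The finite-speed (5), outgoing (4), and error estimates (2) then follow from Theorem \ref{kernel-bdd}: the Hamilton flow of $\pm\langle\xi\rangle_\lambda$ has velocity $\pm\xi/\langle\xi\rangle_\lambda$ of unit size on $|\xi|\approx 1$, so wave packets initially supported on $\mathrm{supp}\,p^\pm_j$ propagate into $|x|\approx|t-s|+2^j$ while avoiding $\mathrm{supp}\,\mathfrak B^\pm_\lambda$; packets forced into $\{|x|<2^{-10}(|t-s|+2^j)\}$ or $\{|x|>2^{10}(|t-s|+2^j)\}$ lie off the classical trajectory by $\gtrsim |t-s|+2^j$ in phase space distance $d_t$, so Theorem \ref{kernel-bdd} yields arbitrary polynomial decay, which after Schur-test summation converts to the $(2^j+|t-s|)^{-N}$ operator bound. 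The weights $x^\alpha$ in (2) are handled by commuting $x$ through the FBI transform and absorbing into the kernel estimate, while the $\lambda^\theta$ factor is free for these error bounds (one may always trade $\lambda$ for a further power of $(2^j+|t-s|)^{-1}$).

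The main obstacle is the sharp pointwise decay (7) with the correct $\lambda^\theta$ loss. Representing $\mathcal S^\pm_\lambda(t,s)\mathcal P^\pm_j$ as an oscillatory integral with phase $(x-y)\cdot\xi\mp(t-s)\sqrt{\lambda^{-2}+|\xi|^2}$ and an amplitude constructed from the Hamilton flow plus the damping factor $e^{-\psi}$, uniformly compactly supported on $|\xi|\approx 1$, the Hessian $\partial^2_\xi\sqrt{\lambda^{-2}+|\xi|^2}$ at $|\xi|\approx 1$ has $d-1$ angular eigenvalues of unit size and a single radial eigenvalue $\lambda^{-2}(1+\lambda^{-2})^{-3/2}\approx\lambda^{-2}$. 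Stationary phase then produces
\begin{equation*}
\|\mathcal S^\pm_\lambda(t,s)\mathcal P^\pm_j\|_{L^1\to L^\infty}\lesssim (1+|t-s|)^{-(d-1)/2}\bigl(1+\lambda^{-2}|t-s|\bigr)^{-1/2},
\end{equation*}
and interpolation between the $\theta=1$ endpoint $\lambda(1+|t-s|)^{-d/2}$ (full radial stationary phase) and the $\theta=0$ endpoint $(1+|t-s|)^{-(d-1)/2}$ (angular only, dropping the radial direction) yields the claimed $\lambda^\theta(1+|t-s|)^{-(d-1+\theta)/2}$. The delicate point is to carry this stationary-phase analysis uniformly through the FBI phase-space kernel of the damped evolution, handling the $\ell^1 S^{(2)}_\epsilon$ perturbation $\mathfrak A^\pm_\lambda$ by treating $e^{-i\int_s^t(\mathfrak A^\pm_\lambda)^w\,d\sigma}$ as a bounded Fourier integral operator as in \cite{tataru4,metataru}, and keeping the $\lambda$-dependence transparent in all derivative bounds. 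Once (7) is secured, the summation $\sum_j \mathcal S^\pm_\lambda \mathcal P^\pm_j$ converges thanks to the $(2^j+|t-s|)^{-N}$ weights in (2) and (4), and Proposition \ref{prop-parametrix} follows by combining the $\mathcal K^\pm_\lambda$ with the analogous $\mathcal K^\pm_{\le 1}$.
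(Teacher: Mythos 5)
Your outline of the construction matches the paper's in broad structure: a damped evolution $\mathcal S^\pm_\lambda$ frequency-localised at $|\xi|\approx1$, conjugation to the flat half-Klein-Gordon propagator, the phase-space kernel bound of Theorem~\ref{kernel-bdd}, and a damping symbol $\mathfrak B^\pm_\lambda$ vanishing on the main propagation regime. Properties (1), (3), (6), and (2), (4), (5) are handled essentially as you describe (the paper works from Theorem~\ref{thm-bdd-kernel} and the claimed bound on the weight $\mathcal W$, then estimates the kernels pointwise), and the $\lambda^\theta$ gain in (2), (4), (5) is obtained precisely as you claim, by proving $\theta=0$ and $\theta=1$ and interpolating.

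The mechanism you propose for the pointwise decay (7), however, does not work as stated. You would write $\mathcal S^\pm_\lambda(t,s)\mathcal P^\pm_j$ as an oscillatory integral with the free phase $(x-y)\cdot\xi\mp(t-s)\sqrt{\lambda^{-2}+|\xi|^2}$ and a smooth amplitude ``constructed from the Hamilton flow'', and then apply stationary phase to the Hessian $\partial^2_\xi\sqrt{\lambda^{-2}+|\xi|^2}$. Such a Fourier-integral-operator representation with a well-behaved amplitude is exactly what is \emph{not} available under the rough $\ell^1 S^{(2)}_\epsilon$ hypothesis on $\mathfrak A^\pm_\lambda$; this is the entire reason Tataru's and Metcalfe--Tataru's FBI/phase-space apparatus exists and is followed here. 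The paper never invokes stationary phase. Instead, it takes a Dirac delta datum at time $s$, applies the phase-space kernel bound of Theorem~\ref{thm-bdd-kernel}, integrates out $\xi$, $x_s$, and the return transform $T^*_{1/t}$, and bounds the remaining integral $\int(1+t^{-1}\|y-x_t(\xi_t,0)\|_\lambda^2)^{-N}\,d\xi_t$ by a change of variables governed by the Jacobian $\partial x_t/\partial\xi_t$ of the Hamilton flow (Proposition~\ref{lipschitz-con}, Lemma~\ref{pro-jacobian-xi}), whose eigenvalues are $\approx1$ ($d-1$ times) and $\approx\lambda^{-2}$ (once). This Jacobian is, up to $\epsilon$-errors, $2(t-s)$ times your Hessian, so the resulting bound $t^{-(d-1)/2}\min\{1,\lambda t^{-1/2}\}$ and the $\theta$-interpolation coincide with your heuristic arithmetic. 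But your closing sentence, that ``the delicate point is to carry this stationary-phase analysis uniformly through the FBI phase-space kernel,'' flags the gap without filling it: the argument has to be rebuilt entirely in the phase-space language (Gaussian kernel bounds plus Lipschitz/inverse-Lipschitz estimates for the flow map $\xi_t\mapsto x_t$), not patched onto a stationary-phase computation that cannot be set up for symbols of this regularity. In particular, treating $e^{-i\int_s^t(\mathfrak A^\pm_\lambda)^w\,d\sigma}$ as a bounded FIO is not what the paper does; the conjugated operator $\mathscr T_\lambda(t,s)=e^{it\langle D_x\rangle_\lambda}\mathcal S_\lambda(t,s)e^{-is\langle D_x\rangle_\lambda}$ is an evolution whose phase-space kernel is controlled by Theorem~\ref{kernel-bdd}, and the flat factors $T_{1/t}e^{-it\langle D_x\rangle_\lambda}P\,T^*_{1/t}$ are estimated directly, not composed multiplicatively with the perturbation.
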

Theorem \ref{main-thm-parametrix} implies Proposition \ref{prop-parametrix}.
%The other case is essentially identical and we omit the details.
Its proof relies on the work by \cite{metataru}. We obtain the kernel bound using Theorem \ref{thm-bdd-kernel} and exploit the full rank of the Jacobian matrix of the flow map $\xi_t\to x_t$  to establish the time decay $\approx t^{-\frac d2}$.

%We would like to remark on a non-optimality of the derivative loss of the pointwise decay \eqref{pointwise-decay}. Nevertheless, we are able to establish the global well-posedness and scattering for the cubic Dirac equation with an almost critical regularity data, in that one only needs to use the endpoint Strichartz estimates with $\theta=\varepsilon\ll1$, in which case non-optimal bound is inessential.

In the remainder of this section, we concentrate on the proof of Theorem \ref{main-thm-parametrix}. 
We follow the argument by \cite{metataru,tataru4} to prove Theorem \ref{main-thm-parametrix}. In this subsection, %we suppress the $\pm$ dependence and focus on the $+$ case to ease notation. We also 
we suppress $\mathcal S^+_{(k)}=\mathcal S_{(k)}$ and so on.
At first we shall establish bounds on the kernel of $\mathfrak S_{(k)}(t,s)$, given by
\begin{align*}
	\mathfrak S_{(k)}(t,s) = T_{\frac1t}\mathcal S_{(k)}(t,s)P_{\frac{1}{4}\le\cdot\le4}T^*_{\frac1s},
\end{align*}
%which are similar to the bounds in Theorem 5 in \cite{metataru} 
using Theorem \ref{kernel-bdd}.
To do this, we note that  $\mathcal A_{(k)}-\sqrt{\lambda^{-2}+|\xi|^2}\in \ell^1 S^{(2)}_\epsilon  $, where $\lambda=2^k$. Thus in view of the work by \cite{metataru,tataru4}, we are now concerned with the evolution equation:
\begin{align}\label{evol-eq}
   \left( D_t + \langle D_x\rangle_\lambda + a^w_\lambda(t,x,D_x) -ib^w_\lambda(t,x,D_x) \right) u = 0,
\end{align}
where $\langle D_x\rangle=\sqrt{\lambda^{-2}+|D_x|^2 }$ and $a_\lambda \in \ell^1S^{(2)}_\epsilon$. Here $b_\lambda \in \ell^1 S^{(1)}$ is an additional damping term which will be determined in Section \ref{sec:construction-damping}. Since $\mathcal A_{(k)}$ is localised in the unit frequency up to an acceptable error, we also assume that the operators $a^w_\lambda$ and $b^w_\lambda$ are localised in the unit frequency.

We let $\chi_\lambda(t,s)$ denote the Hamilton flow associated to the evolution $D_t+\langle D_x\rangle_\lambda+ a^w_\lambda$. At each time $t$ we consider the symplectic map $\mu$ defined by
\begin{align*}
	\mu_t(x,\xi) = (x+t\xi\langle\xi\rangle_\lambda^{-1},\xi),
\end{align*} 
which corresponds to the Hamilton flow for the scaled half Klein-Gordon operator $D_t+\langle D_x\rangle_\lambda$ on the flat space. This extends to a space-time symplectic map 
\begin{align*}
	\mu(t,\tau,x,\xi) = (t,\tau-\langle\xi\rangle_\lambda, x+t\xi\langle\xi\rangle_\lambda^{-1},\xi).
\end{align*}
We let $\mathfrak p_\lambda$ be the symbol defined by 
\begin{align*}
\mathfrak p_\lambda(t,\tau,x,\xi) = \tau+\langle\xi\rangle_\lambda+ a_\lambda(t,x,\xi),
\end{align*}
then 
\begin{align*}
 \mathfrak p_\lambda(\mu(t,\tau,x,\xi)) = \tau+ a_\lambda(t,x+t\xi\langle\xi\rangle_\lambda^{-1},\xi).
\end{align*}
Hence the conjugate of the Hamilton flow $\chi_\lambda(t,s)$ for $\tau+\langle\xi\rangle_\lambda+ a_\lambda$ with respect to $\mu_t$ is the Hamilton flow $\mathscr X(t,s)$ for $\tau+ a_\lambda(t,x+t\xi\langle\xi\rangle_\lambda^{-1},\xi)$,
\begin{align*}
	\chi_\lambda(t,s) = \mu_t \circ\mathscr X(t,s)\circ\mu_s^{-1} .
\end{align*} 
\begin{prop}[Proposition 3.2.7 of \cite{xue}]\label{lipschitz-con}
If the symbol $ a_\lambda\in  \ell^1S^{(2)}_{\epsilon}$ with sufficiently small $\epsilon>0$, and $t>s$, then the Hamilton flow $\chi_\lambda(t,s)$ has the Lipschitz regularity as
\begin{align}
\begin{aligned}
	\frac{\partial(x_t,\xi_s)}{\partial(x_s,\xi_t)} := \begin{bmatrix}
 	\frac{\partial x_t}{\partial x_s} & \frac{\partial x_t}{\partial\xi_t} \\
  \frac{\partial \xi_s}{\partial x_s} & \frac{\partial\xi_s}{\partial \xi_t} \end{bmatrix} = \begin{bmatrix} \mathbf I_d	+\epsilon O(1) & 2(t-s)\langle\xi\rangle^{-3}_\lambda(\langle\xi\rangle_\lambda^2\mathbf I_d-\xi\otimes\xi )+\epsilon O(t) \\ \epsilon O(\frac1s) & \mathbf I_d+\epsilon O(1)
 \end{bmatrix}.
 \end{aligned}
\end{align}
\end{prop}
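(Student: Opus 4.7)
The strategy is classical: set up the variational system for the Hamilton flow $\chi_\lambda(t,s)$, bound each block using the weighted $\ell^1S^{(2)}_\epsilon$ estimates on $\mathfrak{A}_\lambda$, and then pass to the ``mixed'' Jacobian by a block–matrix inversion, invoking symplecticity to rescue the on–diagonal blocks.

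First, $\chi_\lambda$ is generated by the Hamiltonian $H(t,x,\xi)=\langle\xi\rangle_\lambda+\mathfrak{A}_\lambda(t,x,\xi)$, with equations
\begin{align*}
\dot x_t=\frac{\xi_t}{\langle\xi_t\rangle_\lambda}+\partial_\xi \mathfrak{A}_\lambda(t,x_t,\xi_t),\qquad \dot\xi_t=-\partial_x\mathfrak{A}_\lambda(t,x_t,\xi_t).
\end{align*}
Linearising gives the matrix ODE $\dot J=J_0 H''(t,x_t,\xi_t)J$ with $J(s,s)=I_{2d}$, where $J_0=\bigl(\begin{smallmatrix}0&I\\-I&0\end{smallmatrix}\bigr)$. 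Setting $\mathfrak{A}_\lambda\equiv 0$ recovers the free–flow solution
\begin{align*}
J_{\mathrm{free}}(t,s)=\begin{pmatrix}I_d & (t-s)\langle\xi\rangle_\lambda^{-3}(\langle\xi\rangle_\lambda^2 I_d-\xi\otimes\xi)\\ 0 & I_d\end{pmatrix},
\end{align*}
which is precisely the claimed leading term (up to the convention factor).

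Second, I would unpack the $\ell^1 S^{(2)}_\epsilon$ hypothesis on $\{t\approx 2^j\}$:
\begin{align*}
\|\partial_x^2\mathfrak{A}_\lambda\|_{L^\infty}\lesssim \epsilon_j 2^{-2j},\quad \|\partial_x\partial_\xi\mathfrak{A}_\lambda\|_{L^\infty}\lesssim \epsilon_j 2^{-j},\quad \|\partial_\xi^2\mathfrak{A}_\lambda\|_{L^\infty}\lesssim \epsilon_j,
\end{align*}
with $\sum_j\epsilon_j\le\epsilon$ and $\{\epsilon_j\}$ slowly varying. A Picard/Gronwall analysis of the variational ODE, writing $J=\bigl(\begin{smallmatrix}A&B\\C&D\end{smallmatrix}\bigr)$, then yields
\begin{align*}
A=I+\epsilon O(1),\quad B=B_{\mathrm{free}}+\epsilon O(t),\quad C=\epsilon O(1/s),\quad D=I+\epsilon O(1).
\end{align*}
The entry that requires care is $C$: one integrates $\int_s^t \epsilon(\sigma)\sigma^{-2}\,d\sigma\lesssim \epsilon/s$, which depends essentially on the slow variation of $\{\epsilon_j\}$ to control the dyadic tail $\sum_{2^j\ge s}\epsilon_j/2^j$. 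The other three entries follow from the matching dyadic integrals $\sum_j \epsilon_j\lesssim \epsilon$ and $\sum_{2^j\le t}\epsilon_j 2^j\lesssim \epsilon t$.

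Third, since $D$ is invertible for $\epsilon$ small, the block–inversion identity
\begin{align*}
\frac{\partial(x_t,\xi_s)}{\partial(x_s,\xi_t)}=\begin{pmatrix} A-BD^{-1}C & BD^{-1}\\ -D^{-1}C & D^{-1}\end{pmatrix}
\end{align*}
converts the bounds of Step~2 directly into three of the four target estimates: $BD^{-1}=B_{\mathrm{free}}+\epsilon O(t)$, $-D^{-1}C=\epsilon O(1/s)$, and $D^{-1}=I+\epsilon O(1)$.

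The remaining $(1,1)$ entry $A-BD^{-1}C$ is the main obstacle: naively $BD^{-1}C=O(\epsilon(t-s)/s)$, which for $t\gg s$ overshoots the claimed $\epsilon O(1)$. The resolution is to use the symplecticity of $\chi_\lambda$: the identities $A^TD-C^TB=I$ and $D^TB=B^TD$ combine to give the algebraic cancellation
\begin{align*}
A-BD^{-1}C=(D^{T})^{-1},
\end{align*}
so that the top–left entry is controlled \emph{only} by $D$, yielding $A-BD^{-1}C=I+\epsilon O(1)$. This is the one place where the variational bound alone is insufficient and one must invoke the Hamiltonian structure; once this observation is in hand, the rest of the proof is a direct assembly of the pieces above. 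An alternative, essentially equivalent, route would be to conjugate by $\mu_t$ (so that the new flow is generated by the pure perturbation $\mathfrak{A}_\lambda(t,x+t\xi/\langle\xi\rangle_\lambda,\xi)$), prove that its Jacobian is $I+\epsilon O(1)$ by the same variational argument (now with no leading term), and transport back via the explicit Jacobian of $\mu_t$, which implements the cancellation by construction.
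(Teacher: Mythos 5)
The paper does not prove this proposition; it is cited verbatim from Xue's thesis (Proposition 3.2.7). The only related in-paper material is the conjugation identity $\chi_\lambda(t,s)=\mu_t\circ\mathscr X(t,s)\circ\mu_s^{-1}$ established just before Theorem~\ref{thm-bdd-kernel}, which corresponds to your ``alternative route'' and is indeed the device used by Metcalfe--Tataru in the wave case. So there is no in-paper proof to compare against; I evaluate your argument on its own terms.

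Your overall strategy --- variational ODE for the standard Jacobian $J=\bigl(\begin{smallmatrix}A&B\\C&D\end{smallmatrix}\bigr)$, dyadic block bounds from $\ell^1 S^{(2)}_\epsilon$, block inversion to pass to the mixed Jacobian $\partial(x_t,\xi_s)/\partial(x_s,\xi_t)$, and the symplectic identity to control the $(1,1)$ entry --- is sound and does deliver the proposition. The identity $A-BD^{-1}C=(D^T)^{-1}$ follows correctly from $A^T D-C^T B=I$ and $D^T B=B^T D$, and since $D=I+\epsilon O(1)$ is invertible for small $\epsilon$, this gives the $(1,1)$ block directly. The block inversion formula is also correct.

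There is, however, an internal inconsistency in Step~2. The asserted Picard/Gronwall bound $A=I+\epsilon O(1)$ is not correct. Already the first Picard iterate for $A$ contains the term $\int_s^t(\partial_\xi^2 H)(\sigma)\,C(\sigma)\,d\sigma$; since $\partial_\xi^2 H=O(1)$ (the free part does not decay) and $C(\sigma)=\epsilon O(1/s)$, this integral has size $\epsilon O\bigl((t-s)/s\bigr)$, which is unbounded as $t\to\infty$. Indeed, the symplectic identity read in reverse gives $A=(D^T)^{-1}+BD^{-1}C=I+\epsilon O(1)+\epsilon O(t/s)$, which is the true order of magnitude for $A$; your own Step~4 tacitly concedes this by identifying $A-BD^{-1}C$, not $A$, as the object needing rescue. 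The proof survives only because the stated bound on $A$ is never used: the $(1,1)$ block of the mixed Jacobian is controlled by $D$ alone via symplecticity. You should either correct the bound on $A$ to $I+\epsilon O(t/s)$ or omit it from Step~2, as it is both false and superfluous. Separately, your free-flow computation gives $B_{\mathrm{free}}=(t-s)\langle\xi\rangle_\lambda^{-3}(\langle\xi\rangle_\lambda^2 I_d-\xi\otimes\xi)$, while the printed statement carries an extra factor $2$; comparison with the matrix $\Phi^d_{KG}$ in Lemma~\ref{pro-jacobian-xi} (which appears without the $2$, and only whose rank enters the dispersive argument) suggests the $2$ is a typographical artifact, and your coefficient is the correct one.
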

We pay a special attention to the Jacobian matrix $\left[\dfrac{\partial x_t}{\partial\xi_t}\right]$. After a straightforward computation, we obtain the following:
\begin{lem}\label{pro-jacobian-xi}
Let $d\ge3$ and $\xi\in\mathbb R^d$. Suppose that the $d\times d$ matrix $\Phi^d_{KG}(\xi)$ is given by
\begin{align*}
    \Phi^d_{KG}(\xi) = \langle\xi\rangle_\lambda^{-3}(\langle\xi\rangle_\lambda ^2\mathbf I_d-\xi\otimes\xi),
\end{align*}
where the matrix components of the tensor product $\xi\otimes\xi$ are given by $[\xi\otimes\xi]_{ij}=\xi_i\xi_j$. Then for every non-zero $\xi\in\mathbb R^d$ the matrix $\Phi^d_{KG}(\xi)$ has full rank and the eigenvalues $\mu_1,\cdots,\mu_d$ of the matrix are
\begin{align*}
    \mu_1=\cdots=\mu_{d-1} = \langle\xi\rangle_\lambda^{-1}, \ \mu_d = \lambda^{-2}\langle\xi\rangle_\lambda^{-3}.
\end{align*}
\end{lem}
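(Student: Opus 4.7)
The plan is to diagonalize $\Phi^d_{KG}(\xi)$ explicitly by exploiting the rank-one structure of the tensor product $\xi \otimes \xi$. Since the scalar factor $\langle\xi\rangle_a^{-3}$ is strictly positive, it suffices to compute the spectrum of the inner matrix $M(\xi) := \langle\xi\rangle_a^2 \mathbf{I}_d - \xi \otimes \xi$ and then rescale by $\langle\xi\rangle_a^{-3}$ at the end.

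First I would recall that $\xi \otimes \xi$ is a symmetric rank-one matrix: its unique nonzero eigenvalue is $|\xi|^2$, with one-dimensional eigenspace $\mathrm{span}(\xi)$, while the kernel equals the orthogonal complement $\xi^\perp$. Thus I can orthogonally decompose $\mathbb{R}^d = \mathrm{span}(\xi) \oplus \xi^\perp$, and $M(\xi)$ preserves this decomposition.

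Next I evaluate $M(\xi)$ on each summand. On $\xi^\perp$, the tensor $\xi \otimes \xi$ acts as zero, so $M(\xi)$ acts as multiplication by $\langle\xi\rangle_a^2$; this contributes the eigenvalue $\langle\xi\rangle_a^2$ with multiplicity $d-1$. On $\mathrm{span}(\xi)$, the tensor $\xi \otimes \xi$ acts as multiplication by $|\xi|^2$, so $M(\xi)$ acts as multiplication by $\langle\xi\rangle_a^2 - |\xi|^2 = a^{-2}$, using the definition $\langle\xi\rangle_a^2 = a^{-2} + |\xi|^2$. This contributes the simple eigenvalue $a^{-2}$.

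Finally, multiplying by $\langle\xi\rangle_a^{-3}$ gives the claimed eigenvalues
\[
\mu_1 = \cdots = \mu_{d-1} = \langle\xi\rangle_a^{-3} \cdot \langle\xi\rangle_a^2 = \langle\xi\rangle_a^{-1}, \qquad \mu_d = \langle\xi\rangle_a^{-3} \cdot a^{-2} = a^{-2} \langle\xi\rangle_a^{-3}.
\]
Since all eigenvalues are strictly positive for any $\xi \in \mathbb{R}^d$ (the hypothesis $\xi \neq 0$ is in fact unnecessary), the matrix $\Phi^d_{KG}(\xi)$ has full rank. There is no serious obstacle here; the only point requiring care is the algebraic identity $\langle\xi\rangle_a^2 - |\xi|^2 = a^{-2}$, which is what produces the distinguished smaller eigenvalue in the radial direction. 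This degeneracy (one eigenvalue of order $a^{-2}\langle\xi\rangle_a^{-3}$ versus $d-1$ eigenvalues of order $\langle\xi\rangle_a^{-1}$) is the analytic fingerprint of the Klein--Gordon dispersion relation and will be the source of the weaker decay rate compared with the wave case when it is exploited later in the parametrix construction.
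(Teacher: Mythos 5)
Your proof is correct and is exactly the straightforward spectral computation the paper has in mind (the paper omits the proof, stating only that it follows from ``a straightforward computation''): decompose along $\mathrm{span}(\xi)\oplus\xi^\perp$, observe that the rank-one term $\xi\otimes\xi$ vanishes on $\xi^\perp$ and acts as $|\xi|^2$ on $\mathrm{span}(\xi)$, and use $\langle\xi\rangle_a^2-|\xi|^2=a^{-2}$. Your remark that the hypothesis $\xi\neq 0$ is superfluous is also correct, since at $\xi=0$ both eigenvalue formulas coincide and the matrix is $a\,\mathbf I_d$.
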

Recall that  $|\xi|\approx1$. Then
\begin{align*}
    \mu_1 \approx \cdots \approx \mu_{d-1} \approx \frac{1}{2^{-2k}+1} \approx1,
\end{align*}
and
\begin{align*}
    \mu_d \approx \frac{2^{-2k}}{(2^{-2k}+1)^{\frac32}} \approx \frac{2^{-2k}}{2^{-3k}(1+2^{2k})^{\frac32}} \approx 2^{-2k}.
\end{align*}
\begin{rem}\label{rmk:smallfreq}
    On the other hand, in the case $k<0$, which we do not discuss in detail, we simply have $\mu_j\approx1$, $j=1,\cdots,d$, which is the only essential change in this case.
\end{rem}
\begin{rem}\label{rem-jacobian-wave}
It is remarkable to compare the results above with the similar results for the wave operator. We define the $d\times d$ matrix $\Phi^d_{\rm w}(\xi)$ by
\begin{align*}
	\Phi^d_{\rm w}(\xi) = |\xi|^2\mathbf I_d-\xi\otimes\xi. 
\end{align*}
Then the matrix $\Phi^d_{\rm w}(\xi)$ has the maximal rank $d-1$ for non-zero $\xi$.
\end{rem}

As in the previous section we define the integral of the damping term
\begin{align*}
	\Psi_\lambda(t,x_t,\xi_t) = \int_1^t  b_\lambda(\sigma,x_\sigma,\xi_\sigma)\,d\sigma,
\end{align*}
along the $\chi_\lambda$-flow. Then the integral $\Psi_\lambda$ is the $\mu$-conjugate of the integral $\psi$ of $b(t,x,\xi)=b_\lambda(t,x+t\xi\langle\xi\rangle^{-1}_\lambda,\xi)$ along the $\mathscr X$-flow. 
\begin{prop}\label{prop-jacobi-b}
If $ a_\lambda\in\ell^1S^{(2)}_{\epsilon}$ with a sufficiently small $\epsilon>0$ and $ b_\lambda\in\ell^1S^{(1)}_{}$ and $0<s<t$, then we have
\begin{align}
\frac{\partial(\Psi_\lambda(x_t,\xi_t)-\Psi_\lambda(x_s,\xi_s))}{\partial(x_s,\xi_t)} = \begin{bmatrix}
	O(s^{-\frac12}) & O(t^\frac12)
\end{bmatrix}	.
\end{align}
\end{prop}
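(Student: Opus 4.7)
The plan is to differentiate the identity
\[
\Psi_\lambda(t,x_t,\xi_t) - \Psi_\lambda(s,x_s,\xi_s) = \int_s^t \mathfrak B_\lambda(\sigma, x_\sigma, \xi_\sigma)\,d\sigma
\]
under the integral sign and combine the chain rule with the pointwise bounds implied by $\mathfrak B_\lambda \in \ell^1 S^{(1)}$ together with intermediate-time Jacobian estimates derived from Proposition \ref{lipschitz-con}. Taking $|\alpha|+|\beta|=1$ in Definition \ref{def-symbol-k}, the $\ell^1$-weighting yields, for $\sigma\approx 2^j$,
\[
|\partial_x \mathfrak B_\lambda(\sigma,\cdot)| \lesssim \sigma^{-3/2}\epsilon_j,\qquad |\partial_\xi \mathfrak B_\lambda(\sigma,\cdot)| \lesssim \sigma^{-1/2}\epsilon_j,
\]
with $\{\epsilon_j\}\in\ell^1$, or equivalently $|\partial_x \mathfrak B_\lambda|\lesssim \sigma^{-3/2}\epsilon(\sigma)$ and $|\partial_\xi \mathfrak B_\lambda|\lesssim \sigma^{-1/2}\epsilon(\sigma)$ in the notation of \eqref{ep-property}.

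The second step is to derive, for every $\sigma\in[s,t]$, Jacobian bounds for the flow in the mixed parameterisation $(x_s,\xi_t)$. I would apply Proposition \ref{lipschitz-con} separately to the sub-flow on $[s,\sigma]$ with data $(x_s,\xi_\sigma)$ and to the sub-flow on $[\sigma,t]$ with data $(x_\sigma,\xi_t)$, then couple the two by chain-rule substitution. Since the symbols are frequency-localised at $|\xi|\approx 1$ and $\langle\xi\rangle_\lambda \approx 1$ uniformly in $\lambda\ge 1$, the leading block $2(t-s)\langle\xi\rangle_\lambda^{-3}(\langle\xi\rangle_\lambda^2 \mathbf I_d - \xi\otimes\xi)$ is $O(t-s)$ uniformly in $\lambda$. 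A Neumann-type fixed-point argument, closed by the smallness of $\epsilon$, then produces
\[
\frac{\partial x_\sigma}{\partial x_s}\Big|_{\xi_t}=O(1),\quad \frac{\partial \xi_\sigma}{\partial x_s}\Big|_{\xi_t}=O(\sigma^{-1}),\quad \frac{\partial x_\sigma}{\partial \xi_t}\Big|_{x_s}=O(\sigma),\quad \frac{\partial \xi_\sigma}{\partial \xi_t}\Big|_{x_s}=O(1).
\]

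Combining the two inputs via the chain rule yields
\[
\left|\partial_{x_s}\int_s^t\mathfrak B_\lambda\,d\sigma\right|\lesssim \int_s^t \sigma^{-3/2}\epsilon(\sigma)\,d\sigma,\qquad \left|\partial_{\xi_t}\int_s^t\mathfrak B_\lambda\,d\sigma\right|\lesssim \int_s^t \sigma^{-1/2}\epsilon(\sigma)\,d\sigma.
\]
Finally, dyadic summation in the first integral gives $\sum_{2^j\ge s}2^{-j/2}\epsilon_j \le s^{-1/2}\sum_j\epsilon_j \lesssim s^{-1/2}$, while in the second it gives $\sum_{2^j\le t}2^{j/2}\epsilon_j \le t^{1/2}\sum_j\epsilon_j \lesssim t^{1/2}$, which are precisely the two bounds claimed.

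The main obstacle is the second step: the mixed boundary parameterisation $(x_s,\xi_t)$ does not respect subdivisions of $[s,t]$, so one must carefully set up and close the linear system coupling $\partial_{x_s}x_\sigma|_{\xi_t}$ with $\partial_{x_s}\xi_\sigma|_{\xi_t}$ (and analogously for the $\xi_t$-derivatives) using the smallness of $\epsilon$. Once this is handled, the remainder is an $\ell^1$-dyadic bookkeeping of the same flavour as in the localised energy estimates of Section \ref{sec:local-energy-estimate}.
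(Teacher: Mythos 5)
The paper states this proposition without proof (it is the half--Klein--Gordon analogue of Lemma~7 of \cite{metataru}, to which the surrounding text implicitly defers). Your proposal reproduces the standard argument from that literature and is essentially correct. The pointwise symbol bounds you extract from $\mathfrak B_\lambda\in\ell^1 S^{(1)}$ are right, and the final dyadic summations $\sum_{2^j\ge s}2^{-j/2}\epsilon_j\lesssim s^{-1/2}$ and $\sum_{2^j\le t}2^{j/2}\epsilon_j\lesssim t^{1/2}$ are exactly where the $\ell^1$-weighting is used, yielding the asymmetric $(s^{-1/2},t^{1/2})$ output.

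Two remarks on the crux, which you correctly identify as the intermediate-time Jacobian bounds in the mixed parameterisation $(x_s,\xi_t)$. First, the route you sketch --- splitting at $\sigma$, invoking Proposition~\ref{lipschitz-con} on each sub-interval, then ``coupling by chain rule'' --- is awkward because the intermediate momentum $\xi_\sigma$ is itself a function of $(x_s,\xi_t)$, so the two sub-flow parameterisations do not fit together cleanly; done naively it only yields $\partial_{x_s}\xi_\sigma|_{\xi_t}=O(\epsilon s^{-1})$ rather than the $O(\epsilon\sigma^{-1})$ you need (using the weaker bound would give $O(s^{-1}t^{1/2})$, not $O(s^{-1/2})$). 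The cleaner path to your claimed bounds is to linearize the Hamilton ODE directly: with the terminal condition $\partial\xi_t/\partial x_s=0$, integrating $\tfrac{d}{d\tau}\,\partial_{x_s}\xi_\tau = -\partial_x^2\mathfrak A_\lambda\,\partial_{x_s}x_\tau-\partial_x\partial_\xi\mathfrak A_\lambda\,\partial_{x_s}\xi_\tau$ backward from $\tau=t$ to $\tau=\sigma$, with $|\partial_x^2\mathfrak A_\lambda|\lesssim\epsilon(\tau)\tau^{-2}$ and $|\partial_x\partial_\xi\mathfrak A_\lambda|\lesssim\epsilon(\tau)\tau^{-1}$, gives $\partial_{x_s}\xi_\sigma=O(\epsilon\sigma^{-1})$ after a Gronwall step; similarly for $\partial_{\xi_t}x_\sigma=O(\sigma)$ integrating forward from $\sigma=s$. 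Second, when closing the $O(1)$ bound for $\partial_{x_s}x_\sigma$, the naive Gronwall factor $\int_s^\sigma\epsilon(\tau)\tau^{-1}d\tau$ looks logarithmic; it is precisely the $\ell^1$-in-dyadic structure of $\epsilon(\cdot)$ that renders this integral $O(\epsilon)$ uniformly, so you should make that explicit rather than cite ``smallness of $\epsilon$'' alone.
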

For $x=(x_1,\cdots,x_d)\in\mathbb R^d$ and $\delta>0$ we let $\|\cdot\|_{\delta}$ denote the norm defined by $\|x\|_\delta = \sqrt{x_1^2+\cdots+x_{d-1}^2+\delta^2 x_d^2}$.
Then we can establish the pointwise bounds for the kernels of the operators $\mathfrak S_{(k)}(t,s)$. 
\begin{thm}\label{thm-bdd-kernel}
	Consider the evolution equation:
	\begin{align*}
		(D_t +\langle D_x\rangle_\lambda+a^w_\lambda(t,x,D_x)-ib^w_\lambda(t,x,D_x))u = 0,
	\end{align*}
	where the symbols $a_\lambda\in\ell^1S^{(2)}_{\epsilon}$ and $b_\lambda\in\ell^1S^{(1)}_{}$ are real with a sufficiently small $\epsilon>0$. We assume that the symbols $a_\lambda$ and $b_\lambda$ are frequency-localised at $|\xi|\approx1$. Let $t>s$ and define the frequency-localised $L^2$-evolution operators of the equation \eqref{evol-eq} by $\mathcal S_{(k)}(t,s)$.
% We denote by $\mathfrak S_\lambda(t,s)$ the phase space image: $\mathfrak S_\lambda(t,s) = T_{\frac1t}\mathcal S_\lambda(t,s)T^*_{\frac1s}$. 
    Then, the kernels $\mathfrak K_{(k)}$ of the phase space image $\mathfrak S_{(k)}$ of the operator $\mathcal S_{(k)}(t,s)$ satisfies the following pointwise bound:
	\begin{align}
	\begin{aligned}
		|\mathfrak K_{(k)}(t,x,\xi_t,s,x_s,\xi)| & \lesssim \left(\frac{t}{s}\right)^{-\frac d4}\left( 1+(\Psi_\lambda(x_t,\xi_t)-\Psi_\lambda(x_s,\xi_s))^2+t^{-1}\|x-x_t\|_\lambda^2+s\|\xi-\xi_s\|_{\lambda^{-1}}^2 \right)^{-N} \\
		&\qquad\qquad\qquad \times \left( 1+ t d(|\xi_s|,[1/{64},64]) \right)^{-N},
	\end{aligned}
	\end{align}
    with $\lambda=2^k$.
\end{thm}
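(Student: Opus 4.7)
The plan is to reduce to Theorem \ref{kernel-bdd} by conjugating the evolution with a unitary FIO quantization $U^\lambda(t)$ of the flat half-Klein-Gordon flow $\mu_t(x,\xi)=(x+t\xi\langle\xi\rangle_\lambda^{-1},\xi)$, following the scheme of \cite{metataru,tataru4}. The relation $\chi_\lambda=\mu_t\circ\mathscr X\circ\mu_s^{-1}$ already recorded in this section makes this Egorov reduction natural: after conjugation, the leading-order operator $\langle D_x\rangle_\lambda$ is absorbed into the transport, and we are left with an evolution whose principal symbol lies in a class to which Theorem \ref{kernel-bdd} applies directly.

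An Egorov computation conjugates $D_t+\langle D_x\rangle_\lambda+\mathfrak A_\lambda^w-i\mathfrak B_\lambda^w$ to $D_t+\tilde{\mathfrak A}_\lambda^w-i\tilde{\mathfrak B}_\lambda^w$ modulo acceptable errors, where
$$\tilde{\mathfrak A}_\lambda(t,x,\xi)=\mathfrak A_\lambda(t,x+t\xi\langle\xi\rangle_\lambda^{-1},\xi),\quad \tilde{\mathfrak B}_\lambda(t,x,\xi)=\mathfrak B_\lambda(t,x+t\xi\langle\xi\rangle_\lambda^{-1},\xi).$$
Exploiting the frequency localization $|\xi|\approx 1$ and the slowly varying weights $\{\epsilon_j\}$, one checks that $\tilde{\mathfrak A}_\lambda\in\ell^1S^{(2)}_\epsilon$ and $\tilde{\mathfrak B}_\lambda\in\ell^1S^{(1)}$ in the sense of Definition \ref{def-symbol-k}. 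Theorem \ref{kernel-bdd} then gives, for the phase-space kernel $\tilde K$ of the conjugated $L^2$-evolution,
$$|\tilde K(t,x,\xi_t,s,x_s,\xi)|\lesssim (t/s)^{d/4}\bigl(1+(\Psi_\lambda(x_t,\xi_t)-\Psi_\lambda(x_s,\xi_s))^2+t^{-1}|x-\tilde x_t|^2+s|\xi-\tilde\xi_s|^2\bigr)^{-N},$$
where $(\tilde x_t,\tilde\xi_s)$ are the $\mathscr X$-flow coordinates and the identification $\Psi_\lambda(x_t,\xi_t)=\psi(\tilde x_t,\tilde\xi_t)$ is guaranteed by Proposition \ref{prop-jacobi-b}. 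Transporting back through $\mu_t$ and $\mu_s^{-1}$ introduces anisotropic weights, since the $\xi$-differential of $\mu_t^{-1}$ equals $-t\,\Phi^d_{KG}(\xi)$, which by Lemma \ref{pro-jacobian-xi} has $d-1$ eigenvalues $\approx 1$ tangentially and a single eigenvalue $\approx\lambda^{-2}$ in the radial direction. Consequently $t^{-1}|x-\tilde x_t|^2$ and $s|\xi-\tilde\xi_s|^2$ are converted into $t^{-1}\|x-x_t\|_\lambda^2$ and $s\|\xi-\xi_s\|_{\lambda^{-1}}^2$ in the original variables, exactly matching the norms in the statement.

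The frequency localization factor $(1+t\,d(|\xi_s|,[1/64,64]))^{-N}$ arises from the projector $P_{1/4\le\cdot\le 4}$ embedded in $\mathfrak S_\lambda$: a standard non-stationary phase estimate on the kernel of $P_{1/4\le\cdot\le 4}T_{1/s}^*$ yields the claimed rapid decay at rate $s^{1/2}$ away from $[1/4,4]$, and the enlargement to $[1/64,64]$ absorbs the $O(1)$ radial spread of $\chi_\lambda$ over the evolution. The main obstacle lies in the anisotropy bookkeeping: unlike the wave setting of \cite{metataru} where $\Phi^d_w$ is genuinely rank-deficient, here $\Phi^d_{KG}$ has full rank but its radial eigenvalue degenerates as $\lambda^{-2}$ when $\lambda\to\infty$. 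One must therefore verify, uniformly in $\lambda\ge 1$, that the Egorov remainder stays acceptable and that conjugation by $U^\lambda(t)$ at the FBI level is compatible with the weighted norms $\|\cdot\|_\lambda,\|\cdot\|_{\lambda^{-1}}$. Getting this right is precisely what will encode the Klein-Gordon dispersion rate $t^{-(d-1+\theta)/2}$ in the subsequent application of the kernel bound.
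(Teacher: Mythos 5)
Your proposal takes essentially the same route as the paper: both conjugate by the free half-Klein-Gordon propagator $e^{it\langle D_x\rangle_\lambda}$ (your unitary FIO $U^\lambda(t)$), apply the generic kernel bound of Theorem~\ref{kernel-bdd} to the conjugated evolution $\mathscr T_\lambda(t,s)=e^{it\langle D_x\rangle_\lambda}\mathcal S_\lambda(t,s)e^{-is\langle D_x\rangle_\lambda}$, and recover the anisotropic norms $\|\cdot\|_\lambda$, $\|\cdot\|_{\lambda^{-1}}$ from the degenerate eigenvalue $\lambda^{-2}$ of $\Phi^d_{KG}$ recorded in Lemma~\ref{pro-jacobian-xi}. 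The paper makes the ``transport back'' step explicit by writing $\mathfrak S_\lambda(t,s)$ as a three-fold composition whose outer factors $T_{\frac1t}e^{-it\langle D_x\rangle_\lambda}P_{\frac1{64}\le\cdot\le64}T^*_{\frac1t}$ and $T_{\frac1s}P_{\frac1{64}\le\cdot\le64}e^{is\langle D_x\rangle_\lambda}T^*_{\frac1s}$ supply the anisotropic $t^{\frac12}\times(t\lambda^{-2})^{\frac12}$ decay and the $P_{\frac1{64}\le\cdot\le64}$ frequency localisation factor, which is the same computation you describe as pulling back along the Jacobian of $\mu_t$.
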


\begin{proof}
To obtain the pointwise bound of the kernels, we follow the proof of Theorem 5 of \cite{metataru}. In other words, we exploit Theorem \ref{kernel-bdd} together with a $\chi_\lambda$-conjugation with respect to the flow for the rescaled half Klein-Gordon operator $D_t+\langle D_x\rangle_\lambda$ on the flat space. We denote the conjugated operator of the evolution operator $\mathcal S_{(k)}(t,s)$ by
\begin{align*}
	\mathscr T_{(k)}(t,s) = e^{it\langle D_x\rangle_\lambda}\mathcal S_{(k)}(t,s)e^{-is\langle D_x\rangle_\lambda}.
\end{align*}
Then a computation yields
\begin{align*}
	\frac{d}{dt}\mathscr T_{(k)}(t,s) = -i e^{it\langle D_x\rangle_\lambda}(a^w_\lambda(t,x,D_x)-ib^w_\lambda(t,x,D_x))e^{-it\langle D_x\rangle_\lambda }\mathscr T_{(k)}(t,s),
\end{align*}
and hence we deduce that the conjugated operator $\mathscr T_{(k)}(t,s)$ is the evolution operator associated to the pseudodifferential operator
\begin{align*}
	e^{it\langle D_x\rangle_\lambda}(a^w_\lambda(t,x,D_x)-ib^w_\lambda(t,x,D_x))e^{-it\langle D_x\rangle_\lambda}. 
\end{align*}
%Furthermore, by the use of Proposition \ref{prop-cong-01}, Proposition \ref{prop-conj-2-kg},
Furthermore the operator $\mathscr T_\lambda$ can be viewed as the pseudodifferential operator of the form:
\begin{align*}
	\tilde a^w(t,x,D_x) -i\tilde b^w(t,x,D_x)+\tilde c^w(t,x,D_x),
\end{align*}
where the operator $\tilde c^w$ turns out to be a remainder term such that the symbol $\tilde c$ is in $\ell^1S^{(0)}_{}$. This implies that the phase space kernel of the operator $\mathscr T_{(k)}(t,s)$ satisfies the pointwise bound as Theorem \ref{kernel-bdd}. 
Now we pay attention to our evolution equation \eqref{evol-eq}. We rewrite the phase space image $\mathfrak S_{(k)}(t,s)$ of the evolution $\mathcal S_{(k)}(t,s)$ as
\begin{align*}
	\mathfrak S_{(k)}(t,s) & = T_{\frac1t}\mathcal S_{(k)}(t,s)T_{\frac1s}^* = T_{\frac1t}e^{-it\langle D_x\rangle_\lambda}e^{it\langle D_x\rangle_\lambda}\mathcal S_{(k)}(t,s)e^{-is\langle D_x\rangle_\lambda}e^{is\langle D_x\rangle_\lambda}T_{\frac1s}^* \\
	& = T_{\frac1t}e^{-it\langle D_x\rangle_\lambda}\mathscr T_{(k)}(t,s)e^{is\langle D_x\rangle_\lambda}T_{\frac1s}^* \\
	& = T_{\frac1t}e^{-it\langle D_x\rangle_\lambda}S_{\frac{1}{64}\le\cdot\le64}\mathscr T_{(k)}(t,s)S_{\frac{1}{64}\le\cdot\le64}e^{is\langle D_x\rangle_\lambda} T_{\frac1s}^*.
\end{align*}
Here we used the frequency-localisation property \eqref{freq-local-evol-op} of the evolution $\mathcal S_{(k)}(t,s)$ to obtain the last equality. After an application of the identity $T^*T=I$ of the FBI transform we see that the phase space kernels $\mathfrak S_{(k)}(t,s)$ turns out to be the three fold-composition of the conjugated operators of the form:
\begin{align*}
	\mathfrak S_{(k)}(t,s) = \left( T_{\frac1t}e^{-it\langle D_x\rangle_\lambda}P_{\frac{1}{64}\le\cdot\le64} T^*_{\frac1t} \right)\left(T_{\frac1t} \mathscr T_{(k)}(t,s) T^*_{\frac1s} \right) \left( T_{\frac1s}S_{\frac{1}{64}\le\cdot\le64} e^{is\langle D_x\rangle_\lambda}T^*_{\frac1s} \right) .
\end{align*}
%Then one can write an explicit form of the kernels of the above composite operators as we have done in the proof of Corollary \ref{cor-algebra-ops0}, which would require a huge amount of computation. 
Now we observe the rapid decay of each operators using Proposition 8 of \cite{tataru4}.
In fact, the first operator $T_{\frac1t}e^{-it\langle D_x\rangle_\lambda}S_{\frac{1}{64}\le\cdot\le64} T^*_{\frac1t}$ is the conjugation of the usual forward propagator of the rescaled half-Klein-Gordon operators in the flat space-time. After a suitable change of variables, we see that its kernel satisfies a rapid decay on the $t^\frac12\times t^{-\frac12}$-scale away from the graph of the map $\mu_t$ with all the directions tangent to the characteristic hypersurface $$\Sigma_\lambda= \{(\tau,\xi)\colon \tau=\sqrt{\lambda^{-2}+|\xi|^2} \},$$ and the kernel decays rapidly on the $(t\lambda^{-2})^\frac12\times(t\lambda^{-2})^{-\frac12}$ scale away from the same graph along the normal to the surface $\Sigma_\lambda$. The kernel also decays away from the support of the projection operator $S_{\frac1{64}\le\cdot\le64}$ at the same time.\\
\noindent Meanwhile, we have already seen that the kernel of the second operator $T_{\frac1t} \mathscr T_{(k)}(t,s) T^*_{\frac1s}$ satisfies the pointwise bound as Theorem \ref{kernel-bdd}. The third operator $T_{\frac1s}S_{\frac{1}{64}\le\cdot\le64} e^{is\langle D_x\rangle_\lambda}T^*_{\frac1s}$ also turns out to be the conjugation of the usual backward Klein-Gordon propagator and hence its kernel decay rapidly in the similar way as the first operator.
% decay rapidly on the $s^\frac12\times s^{-\frac12}$-scale away from the graph of the map $\mu_s^{-1}$ and away from the support of the projection operator $P_{\frac\lambda{64}\le\cdot\le64\lambda}$ in the similar way as the first operator. 
In consequence, we see that the composition of the three operators replaces the Hamilton flow associated to $a$ by the Hamilton flow associated to $a_\lambda$ and the function $\psi$ with $\Psi_\lambda$ in the kernel bounds. This completes the proof of Theorem \ref{thm-bdd-kernel}.
\end{proof}

In advance to the proof of Theorem \ref{main-thm-parametrix}, we would like to remark some important facts. By the translation-invariance in the time variables we may assume that the initial time $s$ is set to be $2^j$. We observe that along the forward Hamilton flow which starts in the support of the symbols of $\mathcal P_{j}$ we have
\begin{align*}
	\frac14<|\xi|<4, \ |x| \approx t .
\end{align*}
In this regime we have $a_{(k)}(t,x,\xi)-\langle\xi\rangle_{\lambda}\in \ell^1 S^{(2)}_\epsilon$, where $a_{(k)}$ is the symbol of the operator $\mathcal A_{(k)}$.
%Furthermore we also observe that the symbol $\mathfrak A_\lambda(t,x,\xi)$ corresponds to the perturbation of the operator $A_\lambda(t,x,D_x)$, which is of the class $\ell^1S^{(2)}_{\epsilon}$.

Due to the estimates of the outgoing-parametrix \eqref{outgoing-parametrix}, the finite speed property \eqref{finite-speed}, and the frequency-localisation \eqref{frequency-localiseation}, without loss we may study the symbols $a_\lambda$ in the region $\{|x|\ll t\}$ and outside the annulus $\{ \frac{1}{32}< |\xi| <32 \}$, which gives only a negligible error as in \eqref{error-estimates}. It turns out that we only need to focus on the evolution governed by a symbol $\langle\xi\rangle_\lambda+a_\lambda(t,x,\xi)$, with $a_\lambda\in\ell^1S^{(2)}_{\epsilon}$ so that the symbol $a_\lambda$ vanishes outside the regime $\{ \frac{1}{64} <|\xi|<64 \}$ and the operator $a^w_\lambda$ is localised in the unit scale. 

We essentially have $a_\lambda(t,x,\xi)=a_{(k)}-\langle \xi\rangle_\lambda$ and by the estimates \eqref{bdd-mollification} we may assume the better regularity
\begin{align}\label{bdd-perturb-a}
\begin{aligned}
|\partial^\alpha_x\partial^\beta_\xi a_\lambda(t,x,\xi ) | & \lesssim \epsilon(t)(1+t)^{-|\alpha|}, \quad |\alpha|\le 2, \\
|\partial^\alpha_x\partial^\beta_\xi a_\lambda(t,x,\xi ) | & \lesssim \epsilon(t)(1+t)^{-1-\frac{|\alpha|}{2}}, \quad 2\le |\alpha|.
% |\partial^\alpha_x\partial^\beta_\xi \mathfrak A_\lambda(t,x,\xi)| \lesssim \epsilon(t)(1+t)^{-1-|\alpha|}.
\end{aligned}	
\end{align}
%Indeed, \eqref{bdd-perturb-a} follow from our assumptions \eqref{as2-symbol-small} and \eqref{as3-symbol-decay} provided that $|x|\approx t$.
The second inequality of \eqref{bdd-perturb-a} shall only be used if the time scales are too small to ensure $s^\frac12\times s^{-\frac12}$ packets at time $s$ to separate in time $t$. See also (99) of \cite{metataru}. 

However, due to the uncertainty principle, the bounds \eqref{outgoing-parametrix}, \eqref{finite-speed}, \eqref{frequency-localiseation} are inconsistent with the evolution operator $\mathcal S_{(k)}$ associated to the operator $\langle D_x\rangle_\lambda+a_\lambda$.

To remedy this problem and to establish sufficient time-decay as in \eqref{outgoing-parametrix}, \eqref{finite-speed}, \eqref{frequency-localiseation}, we make use of \textit{an artificial damping term} $b_\lambda\in\ell^1S^{(1)}_{}$, which is real and positive. In Section \ref{sec:construction-damping}, we give the explicit construction of the symbol $b_\lambda$ so that the operator $b^w_\lambda$ plays a role as a damping term, which truly resolves the aforementioned leakage, and it vanishes on the main propagation regime $\{|x|\approx t\}\cap\{|\xi|\approx1\}$ simultaneously. See also \cite{metataru,tataru4}.

In summary, we shall define the forward evolution operator $S_{(k)}(t,s)$ of the evolution equation
\begin{align*}
	(D_t+\langle D_x\rangle_\lambda+a^w_\lambda(t,x,D_x))u = ib_\lambda^w(t,x,D_x)u.
\end{align*}
We may replace the operator $\mathcal S_{(k)}(t,s)$ by the truncated operator $S_{1/64\le\cdot\le64}\mathcal S_{(k)}$ to ensure the frequency-localisation property of our parametrix. We shall prove
\begin{align}\label{frequency-localisation-re}
\begin{aligned}
	\|x^\alpha S_{\cdot<\frac{1}{16}}\mathcal S_{{(k)}}(t,s)\mathcal P_{j}\|_{L^2\rightarrow L^2} & \lesssim (2^j+|t-s|)^{-N}, \\
	\|x^\alpha \partial_x^\beta S_{\cdot>16}\mathcal S_{(k)}(t,s)\mathcal P_{j}\|_{L^2\rightarrow L^2} & \lesssim (2^j+|t-s|)^{-N},
\end{aligned}	
\end{align}
which implies that the error from the truncation can be absorbed into other bounds. To prove \eqref{error-estimates} we show the bounds on the damping term $b^w_\lambda$:
\begin{align}\label{error-estimate-damping}
\begin{aligned}
	\|x^\alpha b^w_\lambda(t,x,D_x)\mathcal S_{(k)}(t,s)\mathcal P_{j}\|_{L^2\rightarrow L^2} & \lesssim  (2^j+|t-s|)^{-N}, \\
	\|x^\alpha D_tb^w_\lambda(t,x,D_x)\mathcal S_{(k)}(t,s)\mathcal P_{j}\|_{L^2\rightarrow L^2} & \lesssim  (2^j+|t-s|)^{-N},
\end{aligned}	
\end{align}
which obviously implies that the error estimates \eqref{error-estimates} hold.
Once $\mathcal S_{(k)}(t,s)$ is fixed, $L^2$-bound follows by Proposition 11 of \cite{metataru}. The property $S_{(k)}(s+0,s)=\mathbf 1_{L^2}$ is also obvious. Now we focus on the pointwise decay \eqref{pointwise-decay}.
\subsection{Proof of dispersive inequality \eqref{pointwise-decay}}\label{subsec:proof-disp} Without loss of generality we may assume that the initial time $s$ is greater than $1$. In order to establish the required estimates, it suffices to prove the bounds for $\theta=0$ and $\theta=1$. The required bound follows after an interpolation.
\subsubsection*{Case 1: $|t-s|\lesssim1$}
 If $|t-s|\lesssim 1$, a simple use of the Sobolev embedding together with the $L^2$-bounds for the evolution operator gives 
\begin{align*}
\|\mathcal S_{(k)}(t,s)\mathcal P_{j} u_0\|_{L^\infty_x} & \lesssim  \|\mathcal S_{(k)} (t,s)\mathcal P_{j} u_0\|_{L^2_x} \lesssim  \|\mathcal P_{j} u_0\|_{L^2_x}  \lesssim \|u_0\|_{L^1_x},
\end{align*}
where we used the fact that the evolution operators $\mathcal S_{(k)}(t,s)$ are localised in the unit annular $\{\frac12<|\xi|<2\}$ uniformly in $\lambda$.
Since $|t-s|\lesssim1$, we obviously have 
\begin{align*}
	\|\mathcal S_{(k)}(t,s)\mathcal P_{j} u_0\|_{L^\infty_x} & \lesssim (1+|t-s|)^{-\frac{d}{2}}\|u_0\|_{L^1_x},
\end{align*}
and hence we see a better estimate on the small time scale.
Now we consider the long time regime, i.e., $1\ll |t-s|$. We divide this long-time case into the two cases, $s\le|t-s|$ and $|t-s|\le s$.
\subsubsection*{Case 2: $s\le|t-s|$}
 We consider the case $s\le|t-s|$, which becomes our main propagation regime, in which case one can neglect the damping term by the construction of the symbol $b_\lambda(t,x,\xi)$ in Section \ref{sec:construction-damping}. Here we choose the initial data $u(s,y)$ to be the Dirac delta distribution so that $\|u(s)\|_{L^1_x}=1$. Then applying the FBI transform we have
\begin{align*}
T_{\frac1s}u(s,x_s,\xi) = c_ds^{-\frac d4}e^{-\frac{x_s^2}{2s}}e^{i\xi\cdot x_s},	
\end{align*}
with an absolute constant $c_d$, which depends only on the spatial dimension $d\ge3$.
Now we apply the phase-space image of the $L^2$-evolution operators $\mathfrak S_{(k)}(t,s)$ and we write in the form
\begin{align*}
	\mathfrak S_{(k)}(t,s)[T_{\frac1s}u](t,x,\xi_t) = \int \mathfrak K_{(k)}(t,x,\xi_t,s,x_s,\xi)T_{\frac1s}u(s,x_s,\xi ) \,dx_s d\xi.
\end{align*}
By the use of the definition of the phase-space image of the evolution operators and the identity $T^*_{\frac1t}T_{\frac1t} = I$, we also have
\begin{align*}
\mathfrak S_{(k)}(t,s)T_{\frac1s}u &= (T_{\frac1t}\mathcal S_{(k)}(t,s)T_{\frac1s}^*T_{\frac1s}u(s)) 	
 = T_{\frac1t}\mathcal S_{(k)} (t,s)u(s) 
 = T_{\frac1t}u(t),
\end{align*}
and hence we deduce that
\begin{align*}
	T_{\frac1t}u(t,x,\xi_t) = \int \mathfrak K_{(k)}(t,x,\xi_t,s,x_s,\xi)T_{\frac1s}u(s,x_s,\xi ) \,dx_s d\xi.
\end{align*}
We use the kernel bound Theorem \ref{thm-bdd-kernel} to obtain
\begin{align*}
	& \left|\int \mathfrak K_{(k)}(t,x,\xi_t,s,x_s,\xi)T_{\frac1s}u(s,x_s,\xi ) \,dx_s d\xi \right| \\
	& \lesssim t^{-\frac d4} \int (1+t^{-1}\|x-x_t(x_s,\xi_t)\|_\lambda^2)^{-N/2}(1+s\|\xi-\xi_s(x_t,\xi_s)\|_{\lambda^{-1}}^2)^{-N/2}e^{-\frac{x_s^2}{2s}}\,d\xi dx_s,
\end{align*}
for any $N>0$. Recall that the distorted Euclidean norm $\|\cdot\|_\delta$ is defined by $\|x\|_\delta^2 = |x_1|^2+|x_2|^2+\cdots+\delta^2|x_d|^2$. The bounds for the integral follow in a straightforward way. Indeed, we see that
\begin{align*}
	\int (1+s\|\xi-\xi_s(x_t,\xi_s)\|_{\lambda^{-1}}^2)^{-N/2}\, d\xi & = \int_{\|\xi-\xi_s(x_t,\xi_s)\|_{\lambda^{-1}}\le 2s^{-\frac12}} (1+s\|\xi-\xi_s(x_t,\xi_s)\|_{\lambda^{-1}}^2)^{-N/2} \,d\xi \\
	& \qquad  + \int_{\|\xi-\xi_s(x_t,\xi_s)\|_{\lambda^{-1}}\ge 2s^{-\frac12}} (1+s\|\xi-\xi_s(x_t,\xi_s)\|_{\lambda^{-1}}^2)^{-N/2} \,d\xi.
\end{align*}
Then we have
\begin{align*}
	\int_{\|\xi-\xi_s(x_t,\xi_s)\|_{\lambda^{-1}}\le 2s^{-\frac12}} (1+s\|\xi-\xi_s(x_t,\xi_s)\|_{\lambda^{-1}}^2)^{-N/2} \,d\xi & \lesssim \int_{\|\xi-\xi_s(x_t,\xi_s)\|_{\lambda^{-1}}\le 2s^{-\frac12}} 1\,d\xi \lesssim \lambda s^{-\frac d2},
\end{align*}
and
\begin{align*}
	\int_{\|\xi-\xi_s(x_t,\xi_s)\|_{\lambda^{-1}}\ge 2s^{-\frac12}} (1+s\|\xi-\xi_s(x_t,\xi_s)\|_{\lambda^{-1}}^2)^{-N/2} \,d\xi & \lesssim \lambda s^{-\frac d2} \int_{2}^\infty (1+\rho^2)^{-N/2}\rho^{d-1}\,d\rho \lesssim \lambda s^{-\frac d2},
\end{align*}
when $N>d$. For the integration in $x_s$, we note that the map $x_s\mapsto x_t(x_s,\xi_t)$ is Lipschitz-continuous and the Lipschitz constant is bounded by $O(1)$. Now we integrate with respect to $x_s$ in the similar way as above to get the bound
\begin{align*}
&	\left|\int \mathfrak K_{(k)}(t,x,\xi_t,s,x_s,\xi)T_{\frac1s}u(s,x_s,\xi ) \,dx_s d\xi \right|   \lesssim \lambda t^{-\frac d4} \left( 1+t^{-1}\|x-x_t(\xi_t,0)\|_\lambda^2 \right)^{-N/2}.
\end{align*}
Finally, we apply $T^*_{\frac1t}$ to the $T_{\frac1t}u(t,x,\xi_t)$ and write in the form
\begin{align*}
	T^*_{\frac1t}\mathfrak S_{(k)}(t,s)[T_{\frac1s}u] = [T^*_{\frac1t}T_{\frac 1t}u(t,x,\xi_t)](t,y) = u(t,y)
\end{align*}
and hence we have the bound as
\begin{align*}
	|u(t,y)| & \lesssim \lambda t^{-\frac d2}\int ( 1+t^{-1}\|x-x_t(\xi_t,0)\|_\lambda^2 )^{-N/2} e^{-\frac{|y-x|^2}{2t}}\,dxd\xi_t \\
	& \lesssim \lambda t^{-\frac d2}\int ( 1+t^{-1}\|x-y\|_\lambda^2 )^{-N/4} \,dx \int  ( 1+t^{-1}\|y-x_t(\xi_t,0)\|_\lambda^2 )^{-N/4}d\xi_t.
\end{align*}
By the straightforward way as the previous integration we get
\begin{align*}
	\int ( 1+t^{-1}\|x-y\|_\lambda^2 )^{-N/4} \,dx \lesssim \lambda^{-1} t^{\frac d2},
\end{align*}
and hence we finally obtain the integral
\begin{align*}
	|u(t,y)| & \lesssim\int  ( 1+t^{-1}\|y-x_t(\xi_t,0)\|_\lambda^2 )^{-N/4}d\xi_t.
\end{align*}
Here the exponent $-\frac N4$ plays no crucial role and we may put $\frac N4\to N$ for simplicity.
{
It suffices to show that the above integral satisfies the estimate
\begin{align}
  \int  ( 1+t^{-1}|y-x_t(\xi_t,0)|^2 )^{-N}d\xi_t & \lesssim  t^{-\frac{d-1}{2}}\min\{1,\lambda t^{-\frac12}\}.
\end{align}
Thus it is not harmful to assume $\lambda^2\ll t$.
%We define the distorted Euclidean metric $\|\cdot\|_{\lambda}$ as follows. For $x\in\mathbb R^d$, we define the norm $\|x\|_\delta$ to be
%\begin{align}
 %   \|x\|_\delta = \sqrt{|x_1|^2+\cdots+|x_{d-1}|^2+\delta|x_d|^2}.
%\end{align}
%{\color{red} 
%Maybe you try with \begin{align}
 %   \|x\|_\delta = \sqrt{|x_1|^2+\cdots+|x_{d-1}|^2+\delta |x_d|^2}.
%\end{align}
%and then get two bounds and choose $\delta$ so that both bounds match. I think it should be $\delta=\lambda^{-1}$.
%}
%Then an easy computation shows that $\|ax\|_\delta= a\|x\|_\delta$ for any $a\in\mathbb R$ and $\|x+y\|_\delta \le\|x\|_\delta+\|y\|_\delta$ for $x,y\in\mathbb R^d$.
Now we consider the two cases: $t^{-1/2}\|y-x_t(\xi_t,0))\|_\lambda\lesssim1$ and $t^{-1/2}\|y-x_t(\xi_t,0))\|_\lambda\gg1$, i.e., 
\begin{align*}
	|u(t,y)| & \lesssim \int_{\|y-x_t(\xi_t,0)\|_\lambda\lesssim  t^\frac12}  ( 1+t^{-1}\|y-x_t(\xi_t,0)\|_\lambda^2 )^{-N}d\xi_t \\
	&\qquad + \int_{\|y-x_t(\xi_t,0)\|_\lambda\gg t^\frac12}  ( 1+t^{-1}\|y-x_t(\xi_t,0)\|_\lambda^2 )^{-N}d\xi_t \\
	& := I_1+I_2.
\end{align*}
We first consider $I_1$.
We let $\eta\in\mathbb R^d$ be any fixed point and set
\begin{align*}
    G_1^< := \{ \|y-x_t(\xi_t,0)\|_\lambda \lesssim t^\frac12 \}, \quad G_1^> := \{ \|y-x_t(\xi_t,0)\|_\lambda \gg t^\frac12 \}, \\
    G_2^< := \{ \|x_t(\eta,0)-x_t(\xi_t,0)\|_\lambda \lesssim t^\frac12 \},\quad G_2^> := \{ \|x_t(\eta,0)-x_t(\xi_t,0)\|_\lambda \gg t^\frac12 \}
\end{align*}
Then we rewrite the above integral $I_1$ as
\begin{align*}
	I_1 & \lesssim  \int_{G_1^<} ( 1+t^{-1}\|y-x_t(\eta,0)\|_\lambda^2 )^{-N/2} ( 1+t^{-1}\|x_t(\eta,0)-x_t(\xi_t,0)\|_\lambda^2 )^{-N/2}d\xi_t \\
	& \lesssim \int_{G_1^<\cap G_2^<} ( 1+t^{-1}\|y-x_t(\eta,0)\|_\lambda^2 )^{-N/2} ( 1+t^{-1}\|x_t(\eta,0)-x_t(\xi_t,0)\|_\lambda^2 )^{-N/2}d\xi_t \\
	& \quad + \int_{G_1^<\cap G_2^>}  ( 1+t^{-1}\|y-x_t(\eta,0)\|_\lambda^2 )^{-N/2} ( 1+t^{-1}\|x_t(\eta,0)-x_t(\xi_t,0)\|_\lambda^2 )^{-N/2}d\xi_t \\
	&:= I_{11}+I_{12}.
	%& \lesssim \int ( 1+t^{-1}|x_t(\eta,0)-x_t(\xi_t,0)|^2 )^{-N/8}d\xi_t \\
	%& \lesssim \int ( 1+t^{-\frac12}|x_t(\eta,0)-x_t(\xi_t,0)| )^{-N/4}d\xi_t,
\end{align*}
%If $|y-x_t(\xi_t,0)|\le \sqrt2t^\frac12$ and $|x_{t}(\eta,0)-x_t(\xi_t,0)|\le \sqrt2t^\frac12$, then it is obvious that $|y-x_t(\eta,0)|\le 2\sqrt2t^\frac12$ and hence 
Then
\begin{align*}
	I_{11} & \lesssim \int_{G_1^<\cap G_2^<} ( 1+t^{-1}\|x_t(\eta,0)-x_t(\xi_t,0)\|_\lambda^2 )^{-N/2}d\xi_t.
\end{align*}
In order to control the above integral, we consider the inverse Lipschitz constant of the map $\xi_t\to x_t(\xi_t,0)$. After an appropriate use of change of variables, (or the diagonalisation of the Jacobian matrix of the map $\xi_t\mapsto x_t(\xi_t,0)$ we have the inequality:
\begin{align}\label{inverse-lipschitz}
	\begin{aligned}
	    |\xi_j-\eta_j| &\lesssim  t^{-1}|x_j(\xi,0)-x_j(\eta,0)|, \quad j=1,2,\cdots,d-1, \\
        |\xi_d-\eta_d| & \lesssim t^{-1}\lambda^2 |x_d(\xi,0)-x_d(\eta,0)|.
	\end{aligned}
\end{align}
By using the above inequality \eqref{inverse-lipschitz}, we have
\begin{align*}
t\|\xi_t-\eta\|_{\lambda^{-1}}  \lesssim \|x_t(\xi_t,0)-x_t(\eta,0)\|_\lambda \lesssim t^{\frac12},
\end{align*}
and hence
\begin{align*}
    |\xi_j-\eta_j| \lesssim t^{-\frac12}, \ j=1,2,\cdots,d-1,\quad |\xi_d-\eta_d| \lesssim t^{-\frac12}\lambda.
\end{align*}
Thus we have
\begin{align*}
	I_{11} & \lesssim \int_{ G_1^<\cap G_2^<} ( 1+t^{-1}\|x_t(\eta,0)-x_t(\xi_t,0)\|_\lambda^2 )^{-N/2}d\xi_t \\
    & \lesssim \int_{\|\xi-\eta\|_{\lambda^{-1}}\lesssim t^{-\frac12}} 1\,d\xi \\
    &\lesssim  \lambda t^{-\frac d2},
\end{align*}
where we simply get the measure of the support.
Now we control the integral $I_{12}$:
\begin{align*}
    I_{12} & \lesssim \int_{G_1^<\cap G_2^>} (1+t^{-1}\|x_t(\xi_t,0)-x_t(\eta,0)\|_\lambda^2)^{-N/2}\,d\xi_t.
\end{align*}
Using the inverse Lipschitz constant of the map $\xi_t\mapsto x_t(\xi_t,0)$ we have $\|x_t(\xi_t,0)-x_t(\eta,0)\|_\lambda \gtrsim t\|\xi_t-\eta\|_{ \lambda^{-1}}$. If $\|\xi_t-\eta\|_{\lambda^{-1}} \lesssim t^{-\frac12}$, then we obtain the bound $\lambda t^{-\frac d2}$. On the other hand, if $\|\xi_t-\eta\|_{\lambda^{-1}}\gg t^{-\frac12}$, then
\begin{align*}
  &  \int_{G_1^<\cap G_2^>} (1+t^{-1}\|x_t(\xi_t,0)-x_t(\eta,0)\|_\lambda^2)^{-N/2}\,d\xi_t \\
  & \lesssim \int_{\|\xi-\eta\|_{\lambda^{-1}}\gg t^{-\frac12}} (1+t\|\xi-\eta\|_{\lambda^{-1}}^2)^{-N/2}\,d\xi \\
  & \lesssim \lambda t^{-\frac d2}.
\end{align*}
We consider the integral $I_2$. We decompose it into the two integrals in the similar way and get 
\begin{align*}
	I_2 & \lesssim  \int_{G_1^>} ( 1+t^{-1}\|y-x_t(\eta,0)\|_\lambda^2 )^{-N/2} ( 1+t^{-1}\|x_t(\eta,0)-x_t(\xi_t,0)\|_\lambda^2 )^{-N/2}d\xi_t \\
	& \lesssim \int_{G_1^>\cap G_2^<} ( 1+t^{-1}\|y-x_t(\eta,0)\|_\lambda^2 )^{-N/2} ( 1+t^{-1}\|x_t(\eta,0)-x_t(\xi_t,0)\|_\lambda^2 )^{-N/2}d\xi_t \\
	& \quad + \int_{G_1^>\cap G_2^>}  ( 1+t^{-1}\|y-x_t(\eta,0)\|_\lambda^2 )^{-N/2} ( 1+t^{-1}\|x_t(\eta,0)-x_t(\xi_t,0)\|_\lambda^2 )^{-N/2}d\xi_t \\
	&:= I_{21}+I_{22}.
	\end{align*}
    Then we see that the control of the integrals $I_{21}$ and $I_{22}$ follow in the identical manner as the estimates of the integral $I_1$. We omit the details.
\subsubsection*{Case 3: $|t-s|\le s$} If $|t-s|\le 2^j$, we reinitialise the time scale so that the initial time is set to be $|t-s|$. We need the bounds on the symbol of the damping term $b^w_\lambda$:
\begin{align}
\begin{aligned}
	|\partial_x^\alpha \partial^\beta_\xi b_\lambda(t,x,\xi)| & \lesssim t^{-\frac12-|\alpha|}, \quad |\alpha|\le1, \\
	|\partial_x^\alpha \partial^\beta_\xi b_\lambda(t,x,\xi)| & \lesssim t^{-1-\frac{|\alpha|}{2}}, \quad |\alpha|\ge2,
\end{aligned}	
\end{align}
which can be obtained by the straightforward computation from the construction of the damping term $b_\lambda(t,x,\xi)$ in Section \ref{sec:construction-damping}. The above estimate together with the bound \eqref{bdd-perturb-a} implies that the symbol $a_\lambda$ and $b_\lambda$ again satisfy $a_\lambda\in\ell^1S^{(2)}_{\epsilon}$ and $b_\lambda\in \ell^1S^{(1)}_{}$, respectively. This ensures that we can apply Theorem \ref{thm-bdd-kernel}, and hence the required time decay is obtained by the identical manner as the previous case. See also \cite{metataru,tataru4}. This completes the proof of \eqref{pointwise-decay}.
%\begin{rem}\label{rem-disper-interp}
%Since the asymptotic behaviour of the Klein-Gordon operator on the high-frequency-regime can be considered to be the wave operator, one can obtain the dispersive estimates with time-decay exponent $-\frac{d-1}{2}$. More precisely, one can establish another $L^1\rightarrow L^\infty$ bounds for $\mathcal K_\lambda$ as
%\begin{align}\label{dispersive-ineq-kg-d-1}
%	\|\mathcal K_\lambda(t,s)\|_{L^1_x\rightarrow L^\infty_x} \lesssim \lambda^{\frac{d+1}{2}}(1+|t-s|)^{-\frac{d-1}{2}}.
%\end{align}
%Note that \eqref{dispersive-ineq-kg-d-1} can be obtained by a simple use of rescaling from the results by \cite{metataru}. See the estimate (62) of \cite{metataru}.
%Then we simply interpolate \eqref{dispersive-ineq-kg-d-1} with the dispersive inequality of the above Theorem to obtain 
%\begin{align}\label{dispersive-ineq-interpolate}
%	\|\mathcal K_\lambda(t,s)\|_{L^1_x\rightarrow L^\infty_x} \lesssim \lambda^{d+1+2\varepsilon}(1+|t-s|)^{-\frac{d-1}{2}-\varepsilon},
%\end{align}
%where $\varepsilon>0$ is an arbitrarily small quantity.
%\end{rem}
%%%%%%%%%%%%%%%%%%%%%%%%%%%%%%%%%%%%%%%%%%%%%%%%%%%%%%%%%%%%%%%%%%%%%%%%%%%%%%%%%%%%%%%%%%%%%%%%%%%%%%%%%%%%%%%%%%%%%%%%
\subsection{Construction of the damping term $b^w_\lambda$}\label{sec:construction-damping}
In what follows we focus on the proof of several error-type estimates of Theorem \ref{main-thm-parametrix}. To do this, we need to construct exactly the damping term $\mathfrak B^w_\lambda$.
\begin{lem}\label{property-damping}
Let $\lambda\ge1$. There exists a symbol $b_\lambda\in\ell^1 S^{(1)}_{}$ which satisfies
\begin{align}
\begin{aligned}
	|\partial^\alpha_x\partial^\beta_\xi b_\lambda(t,x,\xi)| & \lesssim t^{-\frac12-|\alpha|}, \ |\alpha|\le1, \\
	|\partial^\alpha_x\partial^\beta_\xi b_\lambda(t,x,\xi)| & \lesssim t^{-1-\frac{|\alpha|}{2}}, \ |\alpha|\ge1,
	\end{aligned}
\end{align} 	
for $|t-s|\le 2^j$, and
\begin{enumerate}
	\item $t^\frac34b_\lambda$ is non-increasing along the Hamilton flow for the evolution $D_t+\langle D_x\rangle_\lambda+a^w_\lambda$ and
	\begin{align*}
		0< t^\frac34b_\lambda(t,x_t,\xi_t)<1 \Rightarrow b_\lambda(2t,x_{2t},\xi_{2t}) = 0.
	\end{align*}
	\item At the initial time, we have \begin{align*}
		b_\lambda(2^j,x,\xi) = 0, \textrm{ in the region } \{\frac18 <|\xi|<8, \ 2^{j-2}<|x|<2^{j+2}, \ x\cdot\xi \ge -2^{-4}|x||\xi| \}.
	\end{align*}
	\item At any time $t\ge 2^j$, we have \begin{align*}
		b_\lambda(t,x,\xi) = t^{-\frac34}, \textrm{ outside the region } \{ \frac{1}{16} < |\xi| <16 \}\cap \{ 2^{-6}t< |x|<2^{6}t \}.
	\end{align*}
\end{enumerate}
\end{lem}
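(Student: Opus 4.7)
The strategy follows the constructions of \cite{metataru,tataru4}: I would build $\mathfrak{B}_\lambda$ as a smoothed indicator of the ``bad region'' propagated by the Hamilton flow. The key structural input is Proposition~\ref{lipschitz-con}, which says that the Hamilton flow of $\langle\xi\rangle_\lambda+\mathfrak{A}_\lambda$ is an $\epsilon$-perturbation of the rescaled flat flow $x_t=x_s+(t-s)\xi\langle\xi\rangle_\lambda^{-1}$. Consequently, forward trajectories starting in the outgoing cone $\{|\xi|\approx 1,\,|x|\approx s,\,x\cdot\xi\ge -2^{-5}|x||\xi|\}$ remain in a slight enlargement of that cone for all later times, while trajectories starting outside an enlargement remain outside a slight shrinking for time at least $\sim t$.

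\textbf{Step 1 (reference cutoffs).} Fix a smooth function $\chi^{(0)}(s,x,\xi)$ that equals $0$ on a shrinking $G_s^-$ and $1$ outside an enlargement $G_s^+$ of the cone
$$G_s=\{1/16<|\xi|<16\}\cap\{2^{-6}s<|x|<2^{6}s\},$$
with the natural symbol bounds $|\partial_x^\alpha\partial_\xi^\beta\chi^{(0)}|\lesssim s^{-|\alpha|}$. The parameters of $G_s^\pm$ are chosen so that, by the Lipschitz bounds of Proposition~\ref{lipschitz-con}, forward trajectories map $G_s^-$ into $G_{s'}^-$ for $s\le s'\le 2s$, and the complement of $G_s^+$ into the complement of $G_{s'}$ on the same time window.

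\textbf{Step 2 (construction).} Define
$$\mathfrak{B}_\lambda(t,x,\xi)\;:=\;t^{-3/4}\,\Phi\!\left(\frac{1}{t}\int_{t}^{2t}\chi^{(0)}(s,\chi_\lambda^{s,t}(x,\xi))\,ds\right),$$
where $\Phi:[0,1]\to[0,1]$ is smooth, non-decreasing, with $\Phi\equiv 0$ on $[0,1/4]$ and $\Phi\equiv 1$ on $[3/4,1]$, and $\chi_\lambda^{s,t}$ denotes the forward Hamilton flow from $t$ to $s$. Write $I(t,x,\xi)$ for the integral inside $\Phi$.

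\textbf{Step 3 (verification of (1)--(3)).} Property~(2) is immediate: for $(x,\xi)\in G_{2^j}$, Step~1 gives $\chi_\lambda^{s,2^j}(x,\xi)\in G_s^-$ on $s\in[2^j,2^{j+1}]$, hence $\chi^{(0)}\equiv 0$ along the trajectory and $\mathfrak{B}_\lambda(2^j,x,\xi)=0$. Property~(3) follows similarly: if $(x,\xi)$ lies outside the cone set in~(3), then the trajectory stays outside $G_s$ for $s\in[t,2t]$, so $\chi^{(0)}\equiv 1$ and $\mathfrak{B}_\lambda=t^{-3/4}$. For~(1), note that along any Hamilton trajectory the function $s\mapsto\chi^{(0)}(s,x_s,\xi_s)$ is non-increasing (a trajectory may enter $G_s^-$ but never leaves), hence $I$ is non-increasing along the flow and so is $t^{3/4}\mathfrak{B}_\lambda=\Phi(I)$. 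Moreover if $0<I(t)<1$ the trajectory has entered $G^-$ somewhere in $[t,2t]$; by invariance it lies in $G^-$ on the entire interval $[2t,4t]$, which forces $I(2t)=0$ and therefore $\mathfrak{B}_\lambda(2t,x_{2t},\xi_{2t})=0$.

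\textbf{Step 4 (symbol bounds).} Differentiating under the integral and applying Proposition~\ref{lipschitz-con}, which yields $|\partial_x x_s|=O(1)$, $|\partial_\xi x_s|=O(t-s)$, $|\partial_x\xi_s|=O(\epsilon/s)$ and $|\partial_\xi\xi_s|=O(1)$, together with $|\partial_x^\alpha\partial_\xi^\beta\chi^{(0)}|\lesssim s^{-|\alpha|}$, one obtains by induction $|\partial_x^\alpha\partial_\xi^\beta I|\lesssim t^{-|\alpha|}$. The first-order bounds in the lemma follow from $|\mathfrak{B}_\lambda|\lesssim t^{-3/4}$ and the chain rule. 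The higher-order bound $|\partial_x^\alpha\partial_\xi^\beta\mathfrak{B}_\lambda|\lesssim t^{-1-|\alpha|/2}$ for $|\alpha|\ge 2$ requires the stronger decay of higher derivatives of $\mathfrak{A}_\lambda$ given by \eqref{as3-symbol-decay} and \eqref{bdd-perturb-a}, which feed into the corresponding higher-order Lipschitz bounds for the flow map $\chi_\lambda^{s,t}$. The $\ell^1 S^{(1)}$ membership then follows by summing these bounds over the dyadic time ranges $t\approx 2^j$.

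\textbf{Main obstacle.} The principal technical point is the higher-derivative control of the flow map needed for the second set of bounds with $|\alpha|\ge 2$: na\"ive use of Proposition~\ref{lipschitz-con} only gives first-order bounds, and iterating the Hamilton system produces terms that grow in $(t-s)$ unless compensated by the sharper decay \eqref{as3-symbol-decay} of the higher $x$-derivatives of $\mathfrak{A}_\lambda$. Delicately verifying that this compensation works out, together with the compatibility of the smoothing $\Phi$ with the ``vanishing-at-$2t$'' clause of~(1), is the heart of the argument.
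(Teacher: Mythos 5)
Your plan diverges from the paper's construction in an essential way. The paper builds $\mathfrak B_\lambda = t^{-3/4}\bigl(1-\phi(b_1)\phi(b_2)\phi(b_3)\phi(b_4)\phi(b_5)\bigr)$ from five \emph{explicit} cutoff functions (two for the $\xi$-shell, two for the $|x|$-shell with time-dependent radius, one for the outgoing angle) and then establishes the monotonicity (1) by directly differentiating each $b_j$ along the Hamilton flow and proving $\frac{d}{dt}b_j(t,x_t,\xi_t)\ge \frac{2}{t}$ on the transition region $\{0\le b_j\le 1\}\cap D_t$, using the ODE $\dot x_t = \xi_t/\langle\xi_t\rangle_\lambda + O(\epsilon)$, $\dot\xi_t=O(\epsilon/t)$ and the slowly-varying properties of $\epsilon(t)$ and $e(t)$.

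Your Step~1 claim, that the forward flow maps $G_s^-$ into $G_{s'}^-$ and the complement of $G_s^+$ into the complement of $G_{s'}$, is false for your choice of $G_s$. You take $G_s$ to be the product of an annulus in $\xi$ with a spherical shell $\{2^{-6}s<|x|<2^6 s\}$ in $x$ and \emph{no angular condition}. An incoming trajectory starting near the inner boundary of $G_s^-$ (say $|x_s|\sim 2^{-5}s$, $x_s\cdot\xi_s<0$) moves at unit speed toward the origin, while the inner boundary grows only at rate $2^{-5}$; such a trajectory exits $G_s^-$ through the inner boundary within a short time. After passing near the origin the trajectory turns outgoing and may re-enter the shell later. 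Consequently $s\mapsto \chi^{(0)}(s,x_s,\xi_s)$ is \emph{not} monotone, so $I(t,x_t,\xi_t)$ is not monotone and $\Phi(I)$ can increase along the flow — property~(1) fails for your $\mathfrak B_\lambda$. The parenthetical ``(a trajectory may enter $G_s^-$ but never leaves)'' is exactly what is wrong. The same defect hits property (3): a trajectory starting just outside the outer boundary $|x|=2^6 t$ immediately lies in the transition annulus $G_t^+\setminus G_t^-$, so $\chi^{(0)}\not\equiv 1$ on $[t,2t]$, and you cannot conclude $\Phi(I)=1$. Adding the outgoing cone condition to $G_s$ does not rescue the argument, because trajectories that become outgoing after passing near the origin still enter the region later, breaking monotonicity; the paper sidesteps this entirely because $t^{3/4}\mathfrak B_\lambda$ is capped at $1$ outside the cone $D_t$, and the differential inequalities for the $b_j$ only need to be checked where all factors are simultaneously in $(0,1)$, which forces $(x,\xi)\in D_t$.

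Finally, your ``Main obstacle'' paragraph identifies the higher-order symbol bounds ($|\alpha|\ge 2$) as the crux. In the paper's construction this is actually routine — the $b_j$ are explicit rational/algebraic functions of $(|x|/t,|\xi|,x\cdot\xi/|x|)$ and $e(t),\epsilon(t)$, and the symbol estimates follow by direct differentiation. The genuine obstacle, which your construction does not overcome, is the flow monotonicity; you would need to replace the flow-averaged soft cutoff by explicit cutoffs and verify the differential inequality $\frac{d}{dt}b_j\ge\frac{2}{t}$ on each transition region, as the paper does.
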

We may replace the power $\frac34$ in the above Lemma by any number between $\frac12$ and $1$. See also \cite{tataru4}.
\begin{lem}\label{concl-damping}
Let the symbol $b_\lambda\in\ell^1 S^{(1)}_{}$ satisfy the above properties of Lemma \ref{property-damping}. Then the bounds \eqref{outgoing-parametrix}, \eqref{finite-speed}, \eqref{frequency-localiseation},	 \eqref{frequency-localisation-re}, and \eqref{error-estimate-damping} hold.
\end{lem}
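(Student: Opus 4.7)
The plan is to reduce each of the five estimates to a pointwise bound on the phase-space kernel of $T_{1/t}\mathcal S_\lambda(t,s)\mathcal P_j T^*_{1/s}$, then apply Theorem~\ref{thm-bdd-kernel} and exploit the three structural properties of $\mathfrak B_\lambda$ supplied by Lemma~\ref{property-damping}. The guiding principle is that $\mathcal P_j$ localizes the initial phase-space data at time $s=2^j$ inside the \emph{good} cone $\Omega_j=\{2^{j-1}<|x_s|<2^{j+1},\ \tfrac14<|\xi_s|<4,\ x_s\cdot\xi_s\ge-2^{-5}|x_s||\xi_s|\}$, where property (ii) gives $\mathfrak B_\lambda(s,x_s,\xi_s)=0$, so the damping integral $\Psi_\lambda$ starts at zero along any trajectory initiated in $\Omega_j$.

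First I would set $s=2^j$ by time translation and reduce the four leakage bounds \eqref{outgoing-parametrix}, \eqref{finite-speed}, \eqref{frequency-localiseation}, \eqref{frequency-localisation-re} to the single statement that the kernel carries a factor $(1+\Psi_\lambda(x_t,\xi_t)^2)^{-N}$ of size $(2^j+|t-s|)^{-N}$ whenever the endpoint $(x_t,\xi_t)$ lies in the corresponding \emph{bad} region, that is, $|x_t|\ll t-s+2^j$, $|x_t|\gg t-s+2^j$, or $|\xi_t|\notin[1/16,16]$. Given this, Schur's test applied to the kernel bound from Theorem~\ref{thm-bdd-kernel} converts the phase-space estimate into the asserted $L^2\to L^2$ bound, while the polynomial weights $x^\alpha$ or derivatives $\partial_x^\beta$ in \eqref{frequency-localisation-re} are absorbed using additional powers of $(2^j+|t-s|)$ produced by the $\|x-x_t\|_\lambda$ and $\|\xi-\xi_s\|_{\lambda^{-1}}$ factors in the kernel.

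The key claim is that any trajectory ending in a bad region at time $t$ accumulates $\Psi_\lambda(x_t,\xi_t)\gtrsim t^{1/4}$. By the Lipschitz estimates of Proposition~\ref{lipschitz-con}, the Hamilton flow of $D_t+\langle D_x\rangle_\lambda+\mathfrak A_\lambda^w$ is a small perturbation of the free Klein--Gordon flow, so a trajectory emanating from $\Omega_j$ stays inside the good strip $\{|x_\sigma|\approx\sigma,\ 1/16<|\xi_\sigma|<16\}$ unless forced out; if it reaches a bad region at time $t$ then it must cross into the support of $\mathfrak B_\lambda$ at some intermediate time $\sigma_0$, and property (iii) gives $\mathfrak B_\lambda(\sigma_0,x_{\sigma_0},\xi_{\sigma_0})\gtrsim\sigma_0^{-3/4}$. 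The monotonicity property (i)---together with its contrapositive, namely that $0<\sigma^{3/4}\mathfrak B_\lambda<1$ at time $\sigma$ forces $\mathfrak B_\lambda$ to vanish at time $2\sigma$---pins the trajectory inside $\{\mathfrak B_\lambda\gtrsim\sigma^{-3/4}\}$ for all $\sigma\in(\sigma_0,t)$, so that $\Psi_\lambda\gtrsim\int_{\sigma_0}^{t}\sigma^{-3/4}\,d\sigma\gtrsim t^{1/4}$. For the error estimate \eqref{error-estimate-damping}, the symbol $\mathfrak B_\lambda(t,\cdot,\cdot)$ multiplying the kernel is itself supported in the bad final region with magnitude $\lesssim t^{-3/4}$, so the same damping-integral argument applies; the $D_t$-version is handled by commuting $D_t$ through and trading it for $\langle D_x\rangle_\lambda+\mathfrak A_\lambda^w-i\mathfrak B_\lambda^w$ via the evolution equation.

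The main obstacle will be the geometric argument in the previous paragraph: justifying that a trajectory leaving the good strip cannot re-enter the region where $\mathfrak B_\lambda$ vanishes, which requires combining the Lipschitz perturbation of the flow with the precise gap implication built into property (i). Once this control is secured, the transition from phase-space kernel bounds to $L^2$ operator bounds and then to the $\lambda^\theta$-quantified statements is routine and proceeds by Schur's test and interpolation, exactly along the lines of the corresponding arguments in \cite{metataru,tataru4}.
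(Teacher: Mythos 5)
Your proposal follows the paper's broad strategy (phase-space kernel bound from Theorem~\ref{thm-bdd-kernel} combined with the damping integral $\Psi_\lambda$), but the central geometric claim has the dynamics backwards and is false in the short-time regime. On the geometry: property~(1) of Lemma~\ref{property-damping} says $\sigma^{3/4}\mathfrak B_\lambda(\sigma,x_\sigma,\xi_\sigma)$ is non-increasing along the Hamilton flow. For initial data localized in $D_s$, $\mathfrak B_\lambda(s,\cdot)=0$ by property~(2), hence $\mathfrak B_\lambda$ remains identically zero along those trajectories for all later times; they cannot cross \emph{into} the support of $\mathfrak B_\lambda$ at any intermediate $\sigma_0$. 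Flow lines can only exit that support, never enter it. Consequently the leakage to be estimated in \eqref{outgoing-parametrix}--\eqref{frequency-localiseation}, \eqref{frequency-localisation-re}, \eqref{error-estimate-damping} does not come from trajectories initiated in $D_s$, but from the rapidly decaying tails of $T_{\frac1s}\mathcal P_j u_0$ away from $D_s$, for which the \emph{entire} trajectory lies inside the damping region from time $s$ onward.

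More importantly, your claim $\Psi_\lambda\gtrsim t^{1/4}$ does not deliver $(2^j+|t-s|)^{-N}$ when $|t-s|\lesssim s$. If the endpoint lies in a bad region, monotonicity together with the ceiling $\mathfrak B_\lambda\le\sigma^{-3/4}$ forces $\mathfrak B_\lambda(\sigma,x_\sigma,\xi_\sigma)=\sigma^{-3/4}$ for all $\sigma\in[s,t]$, so the quantity actually appearing in Theorem~\ref{thm-bdd-kernel} is the difference $\Psi_\lambda(x_t,\xi_t)-\Psi_\lambda(x_s,\xi_s)=4(t^{1/4}-s^{1/4})$. For $t$ close to $s$ this is $\approx s^{-3/4}|t-s|$ and gives no decay at all, while the target $(2^j+|t-s|)^{-N}\approx s^{-N}$ is strong. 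In that regime the decay must come from the other factor in the paper's weight \eqref{put-w}, namely $(1+d_s((x_s,\xi_s),D_s)^2)^{-N}$: a starting point with $\mathfrak B_\lambda(s,x_s,\xi_s)=s^{-3/4}$ lies at rescaled phase-space distance $\gtrsim s^{1/2}$ from $D_s$. The paper's key bound \eqref{bdd-claim-w} combines both sources of decay; a reduction that isolates $\Psi_\lambda$ as the sole source cannot close the argument. Once the correct combined pointwise bound is in hand, the remaining steps you outline (Schur's test and interpolation in $\theta$ to produce the $\lambda^\theta$ factors) are consistent with the paper.
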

In what follows, we give the explicit construction of the symbols $b_\lambda(t,x,\xi)$ of the damping term $b^w_\lambda(t,x,D_x)$. Then we examine the properties of the damping term which truly plays a crucial role to cover the region outside the main propagation regime and consequently to complete the proof of Theorem \ref{main-thm-parametrix}.
\begin{proof}[Construction of the function $b_\lambda=b_\lambda(t,x,\xi)$]
	We define the increasing bounded function $e(s)$ to be 
	\begin{align*}
		e(s) = \frac1\epsilon\int_0^s \frac{\epsilon(\mathfrak s)}{\mathfrak s}\,d\mathfrak s.
	\end{align*}
	We choose a smooth, non-decreasing cut-off function $\phi\in C^\infty(\mathbb R)$ such that $\phi(s) = 0$ when $s\in(-\infty,0)$ and $\phi=1$ on $(1,\infty)$. Now we define the function $b_\lambda$ by
	\begin{align*}
		b_\lambda(t,x,\xi) = t^{-\frac34}\left(1 - \phi(b_{1})\phi(b_{2})\phi(b_{3})\phi(b_{4})\phi(b_{5}) \right),
	\end{align*}
	where 
	\begin{enumerate}
		\item $b_{1}$ is settled to cut-off frequencies which are relatively too larger than $1$, i.e., $|\xi|\gg1$: \begin{align*}
			b_{1}(t,\xi) = \frac{2^{\frac72}+e(t)-|\xi|}{\epsilon(t)},
		\end{align*}
		\item $b_{2}$ is settled to cut-off frequencies which are relatively too smaller than $1$, i.e., $|\xi|\ll1$: \begin{align*}
			b_{2}(t,\xi) = \frac{|\xi|-2^{-\frac72}+ce(t)}{\epsilon(t)},
		\end{align*}
		where $c$ is a fixed small constant,
		\item $b_{3}$ is settled to detect the outgoing waves:\begin{align*}
			b_{3}(t,x,\xi) = \frac{2^{-\frac12}|x|||\xi|+x\cdot\xi}{2^{-12}|x|},
		\end{align*}
		\item $b_{4}$ is settled to cut-off the values of $|x|$ which are too large: \begin{align*}
			b_{4}(t,x) = \frac{2^6t-|x|}{t},
		\end{align*}
		\item $b_{5}$ is settled to cut-off the values of $|x|$ which are too small: \begin{align*}
			b_{5}(t,x,\xi) = \frac{|x||\xi|-2^{-5}t|\xi|+x\cdot\xi}{2^{-10}t}.
		\end{align*}
	\end{enumerate}
	It is easy to observe that $b_\lambda=0$ if $b_{j}=1$, $j=1,\cdots,5$. In other words, the set $\{b_\lambda=0\}$ contains the region 
	\begin{align*}
		\{ \frac18 < |\xi|<8\}\cap \{ t/4 < |x| <4t \} \cap \{ x\cdot \xi \ge -2^{-4}|x| \},
	\end{align*}
	provided that $\epsilon>0$ is sufficiently small. On the other hand, we also observe that $b_\lambda<t^{-\frac34}$ if $0<b_{j}<1$, $j=1,\cdots,5$. More precisely, we see that the set $\{t^\frac34 b_\lambda<1\}$ is contained in the region
	\begin{align*}
		\{ \frac1{16} <|\xi|<16\}\cap \{2^{-6}t<|x|<2^6t\}\cap \{x\cdot\xi > -2^{-\frac12}|x||\}:=D_t,
	\end{align*}
	which shows that the conditions (2) and (3) are satisfied. Now we prove the condition (1). On the set $\{b_\lambda=0\}$, it is trivial, and hence we focus on the set $D_t$. It suffices to show that for each $b_{j}$, we have
	\begin{align*}
		\frac{d}{dt}b_{j}(t,x_t,\xi_t) \ge \frac2t, 
	\end{align*}
	on the set $D_t\cap\{0\le b_{j}\le1\}$,
	where $t\mapsto(x_t,\xi_t)$ is a trajectory of the Hamilton flow for the evolution $D_t+\langle D_x\rangle_\lambda+a^w_\lambda$. We first notice that for each $(x_t,\xi_t)\in D_t$, 
	\begin{align*}
		\frac{d}{dt}\xi_t = O(\frac{\epsilon(t)}{t}), \ \frac{d}{dt}x_t = \frac{\xi_t}{\langle \xi_t\rangle_\lambda}+O(\epsilon(t)),
	\end{align*}
	which easily gives 
	\begin{align*}
		\frac{d}{dt}|\xi_t| = \frac{\xi_t}{|\xi_t|}\frac{O(\epsilon(t))}{t}, 
	\end{align*}
	and hence the lower bounds for $b_{1}$ and $b_{2}$ easily follows. Indeed, we see that
	\begin{align*}
		\frac{d}{dt}b_{1}(t,\xi_t) = \frac{e'(t)}{\epsilon(t)} - \frac{1}{\epsilon(t)}\frac{\xi_t}{|\xi_t|}\frac{O(\epsilon(t))}{t}-\frac{2^{\frac72}-e(t)-|\xi_t|}{\epsilon(t)^2}\epsilon'(t) \ge \frac2t.
	\end{align*}
	Here we used the properties of $\epsilon(t)$ \eqref{ep-property} and the definition of $e(t)$, which gives the desired lower bound on the set $\{0\le b_{1}\le1\}$. For $b_{2}$, in the identical manner we see that
	\begin{align*}
		\frac{d}{dt}b_{2}(t,\xi_t) = \frac{1}{\epsilon(t)}\frac{\xi_t}{|\xi_t|}\frac{O(\epsilon(t))}{t}+c\frac{e'(t)}{\epsilon(t)}-\frac{|\xi_t|-2^{-\frac72}+ce(t)}{\epsilon(t)^2}\epsilon'(t) \ge \frac2t,
	\end{align*}
	on the set $\{0\le b_{2}\le1\}$.
	
	\noindent An easy computation gives us
	\begin{align*}
		\frac{d}{dt}\frac1{|x_t|} = -\frac{x_t}{|x_t|^3}\cdot\left(\frac{\xi_t}{\langle\xi_t\rangle_\lambda}+O(\epsilon) \right), \ \frac{d}{dt}\frac1{|\xi_t|} = -\frac{\xi_t}{|\xi_t|^3}O(\epsilon).
	\end{align*}
	Using these identities, the required lower bounds for $b_{j}$, $j=3,4,5$ are obvious. Indeed, we have
	\begin{align*}
		\frac{d}{dt}b_{3}(t,x,\xi) = 2^{12} \frac{|x_t|^2|\xi_t|^2-(x_t\cdot\xi_t)^2}{|x_t|^3\langle\xi_t\rangle_\lambda}+\frac{O(\epsilon)}{t}\ge \frac2t ,
	\end{align*}
	since $-2^{-\frac32}\le\cos\angle(x_t,\xi_t)\le-\frac12$ on the set $\{0\le b_{3}\le1\}$. We also have for $b_{4}$
	\begin{align*}
		\frac{d}{dt}b_4(t,x_t,\xi_t) = \frac{|x_t|^2\langle\xi_t\rangle_\lambda-tx_t\cdot\xi_t}{t^2|x_t|\langle\xi_t\rangle_\lambda}-\frac{O(\epsilon)}{t} \ge\frac2t,
	\end{align*}
	on the set $\{0\le b_{4}\le1\}$. Now we see that
	\begin{align*}
		\frac{d}{dt}b_{5}(t,x_t,\xi_t) = 2^{10}\left( -\frac{|x_t|\langle\xi\rangle_\lambda+x_t\cdot\xi_t}{t^2}+\frac{|x_t|(x_t\cdot\xi_t)+\xi_t}{t}+\frac{O(\epsilon(t))}{t} \right) \ge\frac2t.
			\end{align*}
	\end{proof}
%Obviously we also need to construct the similar damping term $b_{\le1}^w(t,x,D)$ localised on the disc $|\xi|\le2$. The construction is very similar to the construction of the function $b_\lambda$. In fact, we define the function $b_{\le1}(t,x,\xi)$ by
%\begin{align*}
%	b_{\le1}(t,x,\xi) = t^{-\frac34}\left( 1-\phi(b_1)\phi(b_3)\phi(b_4)\phi(b_5) \right),
%\end{align*}  
%where $\phi$ is the aforementioned non-decreasing smooth function, and 
%\begin{align*}
%	b_1(t,\xi ) &= \frac{8+e(t)-|\xi|}{\epsilon(t)}, \	b_3(t,x,\xi)  = \frac{2^{-\frac12}|x||\xi|+x\cdot\xi}{2^{-12}|x|}, \\
%	b_4(t,x) & = \frac{2^6t-|x|}{t}, \	\qquad b_5(t,x,\xi) = \frac{|x||\xi|-2^{-5}t|\xi|+x\cdot\xi}{2^{-10}t}.
%\end{align*}
%Since we need the damping term on the disc $|\xi|\le2$, we exclude here the function $b_2$ which cut-off too small frequencies. The resulting symbol $b_{\le1}$ also satisfies the properties as Lemma \ref{property-damping}. The explicit proof requires only a minor modification. We omit the details. 
%%%%%%%%%%%%%%%%%%%%%%%%%%%%%%%%%%%%%%%%%%%%%%%%%%%%%%%
\begin{proof}[Proof of Lemma \ref{concl-damping}]
	In view of Lemma 10 and Lemma 11 of \cite{metataru}, we make only a slight modification of the construction of the damping terms $b_\lambda$. %and $\mathfrak B_{\le1}$. %Hence the remaining task is the repetitive process of the proof of Lemma 11 of \cite{metataru}, which we omit the details.
	The proof is based on the pointwise bounds for the phase space transform of the solution $u$ to the evolution equation
	\begin{align*}
		(D_t +\langle D_x\rangle_\lambda+a^w_\lambda(t,x,D)) u = -ib^w_\lambda(t,x,D)u, \ u(2^j) = \mathcal P_{j}u_0.
	\end{align*}
	At the initial time $s=2^j$, we already know that the symbol of $\mathcal P^+_{j}$ is supported on the region
	\begin{align*}
		D_s = \{ \frac14 <|\xi| <4, \ 2^{j-1}<|x|<2^{j+1}, \ \cos\angle(x,\xi) \ge 2^{-5} \}.
	\end{align*}
	Then we have the phase space transform of the initial data $\mathcal P_{j}u_0$ satisfies pointwise bound
	\begin{align*}
		|(T_{\frac1s}\mathcal P_{j}u_0)(x,\xi)| \lesssim \left(1+d_s((x,\xi),D_s)^2 \right)^{-N},
	\end{align*}
	for any $N>0$. Here $d_s$ is the rescaled distance function on the phase space given by
	\begin{align*}
		d_s((x,\xi),(y,\eta))^2 = s^{-1}|x-y|^2+s|\xi-\eta|^2. 
	\end{align*}
	As we have done at the beginning of the proof of the dispersive inequality \eqref{pointwise-decay}, we invoke the definition of the phase space kernel of the evolution operators to write in the form
	\begin{align*}
		\mathfrak S_{(k)}(t,s)[T_{\frac1s}\mathcal P_{j}u_0] = T_{\frac1t}u(t,x,\xi_t) = \int \mathfrak K_{(k)}(t,x,\xi_t,s,x_s,\xi)T_{\frac1s}\mathcal P_{j}u(s,x_s,\xi)\,dx_sd\xi.
	\end{align*}
	The application of the kernel bounds Theorem \ref{thm-bdd-kernel} yields
	\begin{align*}
		|(T_{\frac1t}u)(t,x,\xi_t)|& \lesssim \left(\frac ts\right)^{-\frac d4} \int \left( 1+t^{-1}\|x-x_t\|_\lambda^2 \right)^{-\frac{N}{2}}\left( 1+s\|\xi-\xi_s\|_{\lambda^{-1}}^2+(\Psi_\lambda(x_t,\xi_t)-\Psi_\lambda(x_s,\xi_s))^2 \right)^{-\frac N2} \\
		& \qquad\qquad\qquad\qquad \times \left(1+d_s((x,\xi),D_s)^2 \right)^{-N} \,dx_sd\xi.
	\end{align*}
	After an obvious integration on $\xi$ as we have done in the proof of \eqref{pointwise-decay}, we see that
	\begin{align*}
		|(T_{\frac1t}u)(t,x,\xi_t)|& \lesssim \left(\frac ts\right)^{-\frac d4} \int \left( 1+t^{-1}\|x-x_t\|_\lambda^2 \right)^{-\frac{N}{2}}\left( 1+s\|\xi-\xi_s\|_{\lambda^{-1}}^2\right)^{-\frac N2}\\ & \qquad \times \left(1+(\Psi_\lambda(x_t,\xi_t)-\Psi_\lambda(x_s,\xi_s))^2 \right)^{-\frac N2}\left(1+d_s((x,\xi),D_s)^2 \right)^{-N} \,dx_sd\xi \\
		& \lesssim \lambda \int t^{-\frac d2} \left( 1+t^{-1}\|x-x_t\|_\lambda^2 \right)^{-\frac{N}{2}} \mathcal W(t,x_t,\xi_t)\, dx_t,
	\end{align*}
	where we put
	\begin{align}\label{put-w}
		\mathcal W(t,x_t,\xi_t) = \left(\frac ts\right)^{\frac d4}\left(1+(\Psi_\lambda(x_t,\xi_t)-\Psi_\lambda(x_s,\xi_s))^2 \right)^{-\frac N2}\left(1+d_s((x_s,\xi_s),D_s)^2 \right)^{-N}.
	\end{align}
	Note that we can freely replace the variables $x_s$ in the integrand by $x_t$ with no harm, due to the Lipschitz continuity of the map $x_s\mapsto x_t$, whose Lipschitz constant is bounded by $\lesssim O(1)$. 
	We claim that the function $\mathcal W(t,x_t,\xi_t)$ safisties pointwise bound: for any arbitrarily large $N>0$,
	\begin{align}\label{bdd-claim-w}
	|\mathcal W(t,x_t,\xi_t)| \lesssim \left(\frac ts\right)^{\frac d4} \left(1+ t^\frac12 +t^{-2}|x_t|^2+|\xi_t|^2 \right)^{-\frac N4} \left( 1+ d_t((x_t,\xi_t), \textrm{supp}\, b_\lambda(t))^2 \right)^{\frac N4}	.
	\end{align}
	In other words, we allow an arbitrary growth on the factor $d_t((x_t,\xi_t), \textrm{supp} \,b_\lambda(t))$. When $(x_t,\xi_t)\in \textrm{supp }b_\lambda(t)$, where the operator $b^w_\lambda$ truly plays a role as a damping term, this term is identically zero, and hence is not problematic. However, even if $(x_t,\xi_t)\notin \textrm{supp }b_\lambda(t)$, in which case we deal with the main propagation regime, this term should not make any harm, since $\mathcal W(t,x_t,\xi_t)$ in \eqref{put-w} satisfies an obvious bound 
	\begin{align}\label{bdd-w-trivial}
	 |\mathcal W(t,x_t,\xi_t)|	\lesssim t^{\frac d4}s^{-\frac d4},
	\end{align}
	which implies that we can assume without any loss of generality that 
	\begin{align}\label{as-db-factor}
		d_t ((x_t,\xi_t),\textrm{supp }b_\lambda(t) ) \ll t^{-\frac14},
	\end{align}
	since $|x_t|\approx t$ and $|\xi_t|\approx1$, provided that $(x_t,\xi_t)\notin \textrm{supp }b_\lambda(t)$. Therefore, we conclude that the term $\left( 1+ d_t((x_t,\xi_t), \textrm{supp}\, b_\lambda(t))^2 \right)^{N/4}$ is bounded by $O(1)$ in any cases and it does not cause any loss. 
	
%%%%%%%%%%%%%%%%%%%%%%%%%%%%%%%%%%%%%%%%%%%%%%%%%%%%%%	
	%Before proving the pointwise bound of the kernel \eqref{bdd-claim-w}, 
	The proof of the estimate \eqref{bdd-claim-w} follows in the identical manner as \cite{tataru4}, and hence we omit the details. Now we shall show that it truly implies that the estimates \eqref{outgoing-parametrix}, \eqref{finite-speed}, \eqref{frequency-localisation-re}, \eqref{error-estimate-damping}. In other words, we deduce that outside the main propagation regime $\{|x|\approx t, \ |\xi|\approx1\}$, their time-evolution result in the rapid time-decay. Therefore, the operators $b^w_\lambda$ satisfying the properties of Lemma \ref{property-damping} turn out to play a role as the damping term, which resolves a certain leakage outside the main propagation regime.
	
	In what follows, we deal with several cut-off operators arising from the parametrix $\mathcal S_{(k)}(t,s)\mathcal P_{j}$, which describe the region outside the main propagation regime. In order to establish the required rapid time-decay in the $L^2\rightarrow L^2$-bound for the operators, in view of the Minkowski's integral inequality it is enough to study the pointwise bound of the kernels of each cut-off operators. For simplicity we simply normalise the initial data $u_0$ so that $\|u_0\|_{L^2_x}=1$.

 We give the proof of the estimates \eqref{frequency-localisation-re} and \eqref{error-estimate-damping} with $\theta=1$. When we consider the parametrix as a wave-type operator, one can readily obtain the bound with $\theta=0$ and then establish the factor $\lambda$ by a simple use of interpolation.  The proof of \eqref{outgoing-parametrix} and \eqref{finite-speed} follows in the similar way.
\subsubsection*{Proof of \eqref{frequency-localisation-re}}
Now we deal with the proof of the frequency-localisation properties. To prove the first estimate of \eqref{frequency-localisation-re}, we fix the multi-index $\alpha\in\mathbb N^d_0$ with an arbitrarily large $|\alpha|=\tilde M>0$. After an application of the kernel bound \eqref{bdd-claim-w}, we simply write 
\begin{align*}
	|u(t,y)| & \lesssim \lambda\int_{\mathbb R^d}|x|^{\tilde M} (1+t^{-2}|x|^2)^{-N/4}(1+t^{-1}|x|^2)^{-M}\,dx \\
	& = \lambda\int_{|x|\le 2t}|x|^{\tilde M} (1+t^{-2}|x|^2)^{-N/4}(1+t^{-1}|x|^2)^{-M}\,dx \\
	& \qquad + \lambda\int_{|x|\ge2t}|x|^{\tilde M} (1+t^{-2}|x|^2)^{-N/4}(1+t^{-1}|x|^2)^{-M}\,dx.
\end{align*} 
By simply using the scaling $x\rightarrow tx$ and the integration on $x$, we easily obtain
\begin{align*}
	|u(t,y)| \lesssim \lambda t^{\tilde M+d}(1+t)^{-M/2},
\end{align*}
which is the desired bound provided that $M/2-\tilde M-d>0$ is an arbitrarily large. Indeed, we can choose $M>0$ so that the above inequality always holds. 
The proof of the second estimate of \eqref{frequency-localisation-re} follows in the essentially identical manner. In fact, we put the multi-index $\beta\in\mathbb N^d$ fixed with an arbitrarily large $|\beta|=\tilde N>0$. After the scaling $x\rightarrow tx$ we write
\begin{align*}
	|u(t,y)|& \lesssim \lambda \int_{\mathbb R^d}|x|^{\tilde M} |\xi|^{\tilde N}(1+t^{-2}|x|^2+|\xi|^2)^{-N/4}(1+t^{-1}|x|^2)^{-M}\,dx \\
	& \lesssim \lambda t^{\tilde M+d}\int_{\mathbb R^d}|\xi|^{\tilde N}(1+|\xi|^2)^{-N/8}|x|^{\tilde M}(1+|x|^2)^{-N/8}(1+t|x|^2)^{-M/2}\,dx.
\end{align*}
Then an obvious integration on $x$ yields the desired bound, provided that $\max(\tilde N,\tilde M+d-1)<N/8$, and $\tilde M+d\ll M/2$. Again, we can always choose $N$ and $M$ sufficiently large so that the required inequalities are always true.
\subsubsection*{Proof of \eqref{error-estimate-damping}} To prove the error estimates, we apply Proposition 20 of \cite{tataru4} to exploit the pointwise bound of the phase space kernel, whose symbol is supported in a certain region.
This gives rise to the following integral:
\begin{align*}
	& |T_{\frac1t}b^w_\lambda(t,x,D_x)u(t,x,\xi)| \\
	 & \lesssim |b_\lambda(t,x,\xi)|+(1+d_t((x,\xi),\textrm{supp }b_\lambda(t)))^{-M} \\ 
	 & \qquad\times \int (1+d_t((x,\xi),(y,\eta)))^{-M}(1+t^\frac12+t^{-2}|y|^2+|\eta|^2)^{-N/4}(1+d_t((y,\eta),\textrm{supp }b_\lambda(t)))^{N/4}\,dyd\eta .
	 %& \lesssim (1+t^\frac12+t^{-2}|x|^2+|\xi|^2)^{-N/4}(1+d_t((x,\xi),\textrm{supp }\mathfrak B_\lambda(t)))^{-M}.
\end{align*}
We may bound the term $(1+d_t((y,\eta),\textrm{supp }b_\lambda(t)))^{N/4}$ by $O(1)$ using the aforementioned remark \eqref{as-db-factor}. After the simple use of the scaling: $y\rightarrow t^\frac12y$, $x\rightarrow t^\frac12x$, $\eta\rightarrow t^\frac12\eta$, $\xi\rightarrow t^\frac12\xi$, the integral above is transformed as follows:
\begin{align*}
	\int\!\!\!\!\int (1+|x-y|)^{-M}(1+|\xi-\eta|)^{-M}(1+t^\frac12+t^{-1}|y|^2+t|\eta|^2)^{-N/4}\,dyd\eta . 
\end{align*}
We simply integrate on $y$ and $\eta$ and rescale the variables $t^\frac12x\rightarrow x$ and $t^\frac12\xi\rightarrow \xi$ to get
\begin{align*}
&	|T_{\frac1t}b^w_\lambda(t,x,D_x)u(t,x,\xi)|  \lesssim (1+t^\frac12+t^{-2}|x|^2+|\xi|^2)^{-N/4}(1+d_t((x,\xi),\textrm{supp }b_\lambda(t)))^{-M}.
\end{align*}
Then we apply the map $T^*_{\frac1t}$ and use the bound above to get
\begin{align*}
	|u(t,y)| & \lesssim \int |x|^{\tilde M}(1+t^{-2}|x|^2)^{-N/4}(1+t^{-1}|x|^2)^{-M}\,dx.
\end{align*}
Then the remaining task is exactly same as the proof of the estimates \eqref{frequency-localisation-re}. Finally, we are left to deal with the operator involving $D_tb^w_\lambda$, which is merely a repetitive task following the proof of Lemma 11 of \cite{metataru}, and hence we omit it. %modulo some negligible errors because of the frequency-truncation of the parametrix $\mathcal S^+_{\lambda,j}$, we observe that the operator $D_t\mathfrak B^w_\lambda\mathcal S^+_{\lambda,j}$ can be written as a finite linear combination of several operators:
%\begin{align*}
%	D_t\mathfrak B^w_\lambda(t,x,D)\mathcal S^+_{\lambda,j}(t,s) & = -i(\partial_t\mathfrak B_\lambda)^w(t,x,D)\mathcal S^+_{\lambda,j}-\mathfrak B^w_\lambda(t,x,D)|D_x|\mathcal S^+_{\lambda,j}(t,s) \\
	%&\qquad -\mathfrak B^w_\lambda(t,x,D)\mathfrak A^w_\lambda(t,x,D)\mathcal S^+_{\lambda,j}	+ i\mathfrak B^w_\lambda(t,x,D)\mathfrak B^w_\lambda(t,x,D)\mathcal S^+_{\lambda,j}(t,s),
%\end{align*} each of whose properties inherit from the operator $\mathfrak B^w_\lambda$, i.e., their symbols are all in $S^{0,(1)}_{00}$ and their supports are all contained in the support of the symbol $\mathfrak B_\lambda$. Thus, the remaining task is to repeat the above process by several times, which we omit the details. 
 %\eqref{outgoing-parametrix}, \eqref{finite-speed}, \eqref{frequency-localisation-re}, \eqref{error-estimate-damping}
This completes the proof of all the error-type estimates, and hence completes the proof of the main properties of our outgoing parametrix Theorem \ref{main-thm-parametrix}.
\end{proof}
%%%%%%%%%%%%%%%%%%%%%%%%%%%%%%%%%%%%%%%%%%%%%%%%%%%
\section*{Appendix: Proof of \eqref{est-stri-local}} 
Now we give the proof of the frequency-localised Strichartz estimates.
	We first consider the non-endpoint case $p>2$. By the use of interpolation of the estimates between 
	\begin{align*}
		\|\mathcal K^\pm_{(k)} f\|_{L^\infty_x} & \lesssim \langle2^k\rangle^{\theta}(1+|t-s|)^{-\frac{d-1+\theta}{2}}\|f\|_{L^1_x}
	\end{align*}
	and
	\begin{align*}
		\|\mathcal K^\pm_{(k)} f\|_{L^2_x} & \lesssim \|f\|_{L^2_x},
	\end{align*}
	we obtain for $q\ge2$
	\begin{align*}
		\|\mathcal K^\pm_{(k)} f\|_{L^q_x} & \lesssim \langle2^k\rangle^{\theta(1-\frac2q)}(1+|t-s|)^{-\frac{d-1+\theta}{2}(1-\frac2q)}\|f\|_{L^{q'}_x}.
	\end{align*}
	Then we use the weak Young's inequality.
    
    Next, we deduce the space-time estimates: for $d\ge3$ and $0\le\theta\le1$,
\begin{align}\label{stri-pp'-bdd}
\|\mathcal K^\pm_{(k)} f\|_{L^p_tL^q_x}	& \lesssim \langle2^k\rangle^{(1-\frac2q)\theta}\|f\|_{L^{p'}_tL^{q'}_x},
\end{align}
where $(\sigma,p,q)$ satisfies the relations:
\begin{align*}
\frac2p+\frac{d-1+\theta}{q}= \frac{d-1+\theta}{2}. %\ \sigma(q) = \left(\frac{d+1}{4}+\frac{d+3}{4}\theta\right)\left( 1-\frac2q \right).	
\end{align*}
We first prove the estimates
\begin{align}\label{stri-2-bdd}
\|\mathcal K^\pm_{(k)} f\|_{L^\infty_tL^2_x} & \lesssim \langle2^k\rangle^{(1-\frac2q)\frac\theta2}\|f\|_{L^{p'}_tL^{q'}_x},
\end{align}
which is equivalent to the following estimates via a typical approach of the $TT^*$-argument:
\begin{align*}
	\|\mathcal K^\pm_{(k)}(\cdot,t)^*\mathcal K^\pm_{(k)}(t,\cdot)\|_{L^{p'}_tL^{q'}_x\rightarrow L^p_tL^q_x} \lesssim \langle2^k\rangle^{(1-\frac2q)\theta},
\end{align*}
where $\mathcal K_\lambda(s,t)^*$ is the $L^2$-adjoint of $\mathcal K^\pm_{(k)}(t,s)$. This can be obtained by the use of the estimates \eqref{stri-pp'-bdd} and the following lemma: 
\begin{lem}[Lemma 13 of \cite{tataru4}]
	For the parametrix $\mathcal K^\pm_{(k)}$, we have
	\begin{align*}
		\| \mathcal K^\pm_{(k)}(s_1,t)^*\mathcal K^\pm_{(k)}(t,s_2)-\mathcal K^\pm_{(k)}(s_1,s_1+0)^*\mathcal K^\pm_{(k)}(s_1,s_2)\|_{L^2\rightarrow L^2} \lesssim (1+|s_1-s_2|)^{-N}, \ s_1<s_2<t.
	\end{align*}
\end{lem}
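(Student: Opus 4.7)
I would follow the proof of Lemma 13 in Tataru \cite{tataru4}, which combines an integration in time with the parametrix error estimates from Proposition \ref{prop-parametrix}. Set $G(r) := \mathcal K^\pm_\lambda(s_1, r)^* \mathcal K^\pm_\lambda(r, s_2)$ for $r \in [s_1, t]$; by the jump condition of Proposition \ref{prop-parametrix}(3) applied to the factor $\mathcal K^\pm_\lambda(r, s_2)$, the map $r \mapsto G(r)$ is smooth on $(s_1, s_2)$ and on $(s_2, t)$ with a single jump at $r = s_2$. Since $\mathcal S^\pm_\lambda$ is the identity at equal times, $G(s_1+0)$ coincides with the second operator on the left-hand side of the claim, so
\begin{align*}
\mathcal K^\pm_\lambda(s_1, t)^* \mathcal K^\pm_\lambda(t, s_2) - \mathcal K^\pm_\lambda(s_1, s_1+0)^* \mathcal K^\pm_\lambda(s_1, s_2) = \int_{s_1+0}^{s_2-0} G'(r)\, dr + J_{s_2} + \int_{s_2+0}^t G'(r)\, dr,
\end{align*}
where $J_{s_2} := \mathcal K^\pm_\lambda(s_1, s_2)^*[\Pi^+ - \Pi^-]$ and $\Pi^\pm := \sum_j \mathcal P^\pm_j$.

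To bound $G'(r)$ I would use the approximate equations satisfied by the parametrix: by Proposition \ref{prop-parametrix}(2), the inhomogeneity $R(r, s) := (D_r + A^\pm_\lambda(r))\mathcal K^\pm_\lambda(r, s)$ obeys the weighted bound $\|(1+|x|)^N R(r, s)\|_{L^2 \to L^2} \lesssim (1+|r-s|)^{-N}$. Differentiating $G$ and using both this first-variable equation and its right-variable counterpart derived from the structure $\mathcal K^\pm_\lambda(s_1, r) = \mathcal S^\pm_\lambda(s_1, r)\Pi^\pm$ together with the unitarity of $\mathcal S^\pm_\lambda$, we express $G'(r)$ as a sum of terms each of which contains either $R(s_1, r)^*\mathcal K^\pm_\lambda(r, s_2)$ or $\mathcal K^\pm_\lambda(s_1, r)^* R(r, s_2)$, modulo a lower-order remainder coming from the zero-order commutator $[A^\pm_\lambda, \Pi^\pm]$ (which is controlled by the same weighted estimates via the symbol calculus). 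These products are estimated by pairing the weighted $R$-bound with the outgoing bound of Proposition \ref{prop-parametrix}(4) and the finite-speed bound (Theorem \ref{main-thm-parametrix}(5)), which confine the other parametrix factor to the region $|x| \gtrsim |r - s_k|$; this yields $\|G'(r)\|_{L^2 \to L^2} \lesssim (1+|r-s_1|)^{-N}(1+|r-s_2|)^{-N}$. Integrating in $r$ and using $\max(|r-s_1|, |r-s_2|) \geq |s_1-s_2|/2$ then produces the required $(1+|s_1-s_2|)^{-N}$ bound on both smooth integrals.

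The principal obstacle is the jump $J_{s_2}$, whose $L^2 \to L^2$ norm at face value is $O(1)$: the desired decay must be extracted by a microlocal non-overlap argument based on the phase space kernel bounds of Theorem \ref{thm-bdd-kernel}. The backward parametrix $\mathcal K^\pm_\lambda(s_1, s_2)$ with $s_1 < s_2$ is microlocally concentrated near the Hamilton trajectories that have propagated for time $|s_2 - s_1|$ starting from $\mathrm{supp}\,\Pi^-$, and the image of this support is geometrically separated from $\mathrm{supp}(\Pi^+ - \Pi^-)$ by a phase space distance proportional to $|s_2 - s_1|$ (outgoing packets stay outgoing and move outward, while the once-incoming packets have been carried through the interaction region by time $|s_2-s_1|$). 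The polynomial off-diagonal decay of the FBI phase space kernel then converts this geometric separation into the bound $\|J_{s_2}\|_{L^2 \to L^2} \lesssim (1 + |s_2 - s_1|)^{-N}$. This microlocal step, as in \cite{metataru, tataru4}, is the technical heart of the argument.
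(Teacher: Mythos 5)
The paper does not actually give a proof of this lemma: it states ``The proof is identical as the proof of Lemma 13 of \cite{tataru4}'' and stops there, so I am comparing your attempt against what that argument must look like rather than against a written-out proof.

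Your overall skeleton is the natural one and large parts are correct: writing the difference as a line integral of $\partial_r G(r)$ for $G(r)=\mathcal K^\pm_\lambda(s_1,r)^*\mathcal K^\pm_\lambda(r,s_2)$, exploiting the cancellation of the two $A^\pm_\lambda$-terms in $\partial_r G$, and pairing the weighted error estimate $\|(1+|x|)^N(D_t+A^\pm)\mathcal K^\pm_\lambda\|\lesssim (1+|t-s|)^{-N}$ with the outgoing bound of Proposition~\ref{prop-parametrix}(4) to obtain $\|\partial_r G(r)\|\lesssim(1+|r-s_1|)^{-N}(1+|r-s_2|)^{-N}$ and hence the required $(1+|s_1-s_2|)^{-N}$ upon integration. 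Two remarks on that part: (i) the cancellation does not come from ``unitarity'' --- $\mathcal S^\pm_\lambda$ is a damped, hence non-unitary, evolution --- it comes from taking adjoints of the first-variable error equation together with the reality of the symbol $A^\pm_\lambda$; (ii) this only works because the paper's convention (stated in the appendix but worth recalling) is $\mathcal K^\pm_\lambda(s,t)^*:=[\mathcal K^\pm_\lambda(t,s)]^*$, so that both factors of $G$ are differentiated in the \emph{first} parametrix slot, for which alone Proposition~\ref{prop-parametrix}(2) is available.

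The serious gap is the jump term. You correctly identify $J_{s_2}=\mathcal K^\pm_\lambda(s_1,s_2)^*\bigl(\mathcal K^\pm_\lambda(s_2+0,s_2)-\mathcal K^\pm_\lambda(s_2-0,s_2)\bigr)$ as the crux, but the proposed ``microlocal non-overlap'' argument cannot work as stated. By the jump condition in Proposition~\ref{prop-parametrix}(3), $\bigl(\mathcal K^\pm_\lambda(s_2+0,s_2)-\mathcal K^\pm_\lambda(s_2-0,s_2)\bigr)P_1=P_1$; that is, $\Pi^+-\Pi^-$ acts essentially as the identity on the relevant frequency annulus. Since the parametrix is frequency-localised there (property (6) of Theorem~\ref{main-thm-parametrix}), one finds $J_{s_2}\approx\mathcal K^\pm_\lambda(s_1,s_2)^*$, which has $L^2$-operator norm of order one and does not decay in $|s_1-s_2|$; there is no phase-space separation between $\operatorname{supp}(\Pi^+-\Pi^-)$ and anything, because it is not a small set. (Note also that under the paper's convention $\mathcal K^\pm_\lambda(s_1,s_2)^*=[\mathcal K^\pm_\lambda(s_2,s_1)]^*$ is the adjoint of the \emph{forward} parametrix, not the backward parametrix you describe.) The boundary term at $r=s_2$ therefore cannot be dismissed by the argument you give; the actual proof must instead stop the line integral at $r=s_2+0$ and establish, as a separate lemma, that the boundary value $[\mathcal K^\pm_\lambda(s_2,s_1)]^*\Pi^+$ agrees with $\Pi^+\mathcal K^\pm_\lambda(s_1,s_2)$ up to $O((1+|s_1-s_2|)^{-N})$ --- an approximate time-reversal symmetry of the parametrix that is genuinely nontrivial and is not addressed in your write-up. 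Until that step is supplied, the argument is incomplete.
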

\noindent The proof is identical as the proof of Lemma 13 of \cite{tataru4}. %We compute
%\begin{align*}
%	D_t[\mathcal K^\pm_\lambda(s_1,t)^*\mathcal K^\pm_\lambda(t,s_2)] & = -[(D_t+A^\pm)\mathcal K^\pm_\lambda(s_1,t)]^*\mathcal K^\pm_\lambda(t,s_2)+\mathcal K^\pm_\lambda(s_1,t)^*[(D_t+A^\pm)\mathcal K^\pm_\lambda(t,s_2)],
%\end{align*}
%where the adjoint in the first term on the right handside is an $L^2_x(\mathbb R^d)$-adjoint and not a space-time adjoint. Using the error-type estimates and $L^2$-bound for $\mathcal K^\pm_\lambda$ we see that
%\begin{align*}
%	& \|[(D_t+A^\pm)\mathcal K^\pm_\lambda(s_1,t)]^*\mathcal K^\pm_\lambda(t,s_2)\|_{L^2\rightarrow L^2} \\
%	 & \lesssim \|(1+|x|)^N (D_t+A^\pm)\mathcal K^\pm_\lambda(s_1,t)\|_{L^2\rightarrow L^2} \|(1+|x|)^{-N}\mathcal K^\pm_\lambda(t,s_2)\|_{L^2\rightarrow L^2} \\
%	 & \lesssim (1+|t-s_1|)^{-N}(1+|t-s_2|)^{-N},
%\end{align*}
%and
%\begin{align*}
%	& \|\mathcal K^\pm_\lambda(s_1,t)^*[(D_t+A^\pm)\mathcal K^\pm_\lambda(t,s_2)]\|_{L^2\rightarrow L^2}  \\
%	& \lesssim \|(1+|x|)^{-N}\mathcal K^\pm_\lambda(s_1,t)\|_{L^2\rightarrow L^2} \|(1+|x|)^N (D_t+A^\pm)\mathcal K^\pm_\lambda(t,s_2)\|_{L^2\rightarrow L^2} \\
%	& \lesssim (1+|t-s_1|)^{-N}(1+|t-s_2|)^{-N}.
%\end{align*}
%The desired bound is obtained by integrating the relation between $t$ and $s_1$.\\
Now we prove non-endpoint Strichartz estimates. We interpolate the above estimates \eqref{stri-2-bdd} and \eqref{stri-pp'-bdd} with $(p,q)=(p_2,q_2)$ to get
\begin{align*}
\|\mathcal K^\pm_{(k)} f\|_{L^{p_1}_tL^{q_1}_x} & \lesssim \langle2^k\rangle^{(1-\frac2{q_2})\frac\theta2(1-\eta)+(1-\frac2{q_2})\theta\eta}\|f\|_{L^{p_2'}_tL^{q_2'}_x},	
\end{align*}
where $\frac1{p_1}= \frac{1-\eta}{\infty}+\frac{\eta}{p_2}$, $\frac1{q_1}= \frac{1-\eta}{2}+\frac{\eta}{q_2}$ for $0\le\eta\le1$. Note that $q_1\le q_2$. Since $(1-\frac2{q_2})\eta=1-\frac2{q_1}$, we have
\begin{align*}
\|\mathcal K^\pm_{(k)} f\|_{L^{p_1}_tL^{q_1}_x} & \lesssim \langle2^k\rangle^{(1-\frac2{q_1})\frac\theta2+(1-\frac2{q_2})\frac\theta2}\|f\|_{L^{p_2'}_tL^{q_2'}_x},	
\end{align*}
which proves the $L^{p}_tL^{q}_x$-bound of $\mathcal K^\pm_{(k)} f$. Concerning the $X_k$-part, as the proof of Proposition 12 of \cite{tataru4}, it suffices to prove the estimates
\begin{align*}
	\|\mathbf 1_{|x|<R} \mathcal K^\pm_{(k)}(t,s)\|_{L^{p'}_tL^{q'}_x\rightarrow L^2_tL^2_x} \lesssim \langle2^k\rangle^{(1-\frac2q)\frac\theta2 }|R|^\frac12.
\end{align*} 
We may assume that $R\gg1$.
%We shall use the rescaling argument. For the parametrix $\mathcal K^\pm_0$ localised on the unit annulus $|\xi|\approx1$, the $L^{p'}_tL^{q'}_x\rightarrow X_0$-bound is now rewritten as
%\begin{align*}
%	\|\mathbf 1_{|x|<R} \mathcal K^\pm_0(t,s)\|_{L^{p'}_tL^{q'}_x\rightarrow L^2_tL^2_x} \lesssim |R|^\frac12.
%\end{align*}
We split the parametrix $\mathcal K^\pm_{(k)}(t,s)$ into
\begin{align*}
	\mathcal K^\pm_{(k)}(t,s) = \mathbf 1_{\{|t-s|<2^{10}R\}}\mathcal K^\pm_{(k)}(t,s)+\mathbf 1_{\{|t-s|>2^{10}R\}}\mathcal K^\pm_{(k)}(t,s).
\end{align*}
Then for the first term on the right handside the use of the H\"older inequality and the Strichartz estimates yields
\begin{align*}
	\|\mathbf 1_{|x|<R} \mathbf 1_{\{|t-s|<2^{10}R\}}\mathcal K^\pm_{(k)}(t,s)f\|_{L^2_tL^2_x} & \lesssim |R|^\frac12 \|\mathcal K^\pm_{(k)} f\|_{L^\infty_tL^2_x} \\
	& \lesssim  |R|^\frac12 \langle2^k\rangle^{(1-\frac2q)\frac\theta2} \|f\|_{L^{p'}_tL^{q'}_x}.
\end{align*}
For the second term %we use the rescaling argument. Let us denote by $\mathcal K^\pm_0(t,s)$ the parametrix localised on the disc $|\xi|\le 2$.
we use the error-type estimates and the Sobolev embedding to get
\begin{align*}
	\|\mathbf 1_{|x|<R}\mathbf 1_{\{|t-s|>2^{10}R\}}\mathcal K^\pm_{(k)}(t,s)f\|_{L^2_tL^2_x} & \lesssim \left\| (1+|\cdot|)^{-N}*\|f\|_{L^2_x} \right\|_{L^2_t} \\
	& \lesssim \left\| (1+|\cdot|)^{-N}*\|f\|_{L^{q'}_x} \right\|_{L^2_t} \\
	& \lesssim \|f\|_{L^{p'}_tL^{q'}_x},
\end{align*}
where we used the weak Young's inequality to get the last inequality. %After the rescaling we have
Now we are concerned with the endpoint issue and follow the strategy of \cite{keeltao,tataru4}. We need to prove the following estimates:
%\begin{lem}
%he Strichartz estimates \eqref{est-stri-local} holds for $p=2$. In other words, we have
\begin{align}\label{est-stri-endpoint}
	\|\mathcal K^\pm_{(k)} f\|_{L^2_tL^q_x} & \lesssim \langle2^k\rangle^{(1-\frac2q)\frac\theta2}\|f\|_{L^2_tL^{q'}_x},
\end{align}	
where $q=\frac{2(d-1+\theta)}{d-3+\theta}$, $0\le\theta\le1$, and $d\ge3$. Here we exclude the endpoint $q=\infty$, $\theta=0$, $d=3$.
%\end{lem}
\noindent We prove this in the similar way as Theorem 4 of \cite{tataru}.
We note that the kernel $K^\pm_{(k)}$ of the parametrix $\mathcal K^\pm_{(k)}$ satisfies the bound
\begin{align}\label{kernel-bdd-diagonal}
	|K^\pm_{(k)}(t,x,s,y)| & \lesssim \langle2^k\rangle^{\theta}(1+|t-s|+|x-y|)^{-\frac{d-1+\theta}{2}}, \ 0\le\theta\le1,
\end{align}
%where $\lambda=2^k$, 
since by our construction the kernel  of $\mathcal K^\pm_{(k)}$ is concentrated on the region $\{ |x-y|\approx|t-s| \}\cap\{|\xi|\approx1\}$. From now on we drop out the sign $\pm$ of the notation $\mathcal K^\pm_{(k)}$ and simply denote the parametrix by $\mathcal K_{(k)}$.
The first step is to decompose our parametrix $\mathcal K_{(k)}$ into the sum of the dyadic pieces:
\begin{align*}
	\mathcal K_{(k)} = \sum_{\mu\in 2^{\mathbb Z}}\mathcal K_{(k)}^\mu,
\end{align*}
each of which are supported on the region $|(t,x)-(s,y)|\approx \mu$. To construct such a dyadic decomposition we introduce the collection $\mathcal Q$ of all closed cubes of size $\mu$ and whose vertices are in $\mu(\mathbb Z^{d+1}\times\mathbb Z^{d+1})$ for some dyadic number $\mu>0$. We endow the collection $\mathcal Q$ with the ordering by inclusion. Then for any $Q,Q'\in\mathcal Q$, we must have $|Q\cap Q'|=0$ or $Q\subset Q'$ or $Q'\subset Q$. Now we denote the maximal cubes in $\mathcal Q$ by $\mathfrak Q$. Then it follows that
\begin{align*}
	\mathbb R^{d+1}\times\mathbb R^{d+1} = \bigcup_{Q\in\mathfrak Q}Q
\end{align*}
is an almost disjoint partition of $\mathbb R^{d+1}\times\mathbb R^{d+1}$. We make the label for the cubes in $\mathfrak Q$ as
\begin{align*}
	\mathfrak Q = \{ Q^\alpha_\mu\times\widetilde{Q}^\alpha_\mu : \mu\in2^{\mathbb Z}, \ \alpha\in I \},
\end{align*}
so that for each $\alpha$, the cubes $Q^\alpha_\mu$ and $\widetilde{Q}^\alpha_\mu$ have size $\mu$. Then we have the partition of unit into the characteristic functions
\begin{align*}
	1 = \sum_{\mu\in2^{\mathbb Z}}\sum_{\alpha}\chi_{Q^\alpha_\mu}\chi_{\tilde{Q}^\alpha_\mu}.
\end{align*}
Finally, we define the desired dyadic piece of the operator $\mathcal K_{(k)}$ as 
\begin{align*}
	\mathcal K_{(k)}^\mu = \sum_\alpha \chi_{Q^\alpha_\mu}\mathcal K_{(k)}\chi_{\tilde{Q}^\alpha_\mu}.
\end{align*}
Then the kernel of each $\mathcal K_{(k)}^\mu$ is supported at distance $\mu$ from the diagonal. The main estimate is the following, which is very similar to Lemma 3.2 of \cite{tataru}. See also Lemma 4.1 of \cite{keeltao}.
\begin{lem}\label{lem-endpoint-int}
Let $d\ge3$. The following estimate hold for $(q,\tilde q)$ in a neighborhood of $(\frac{2(d-1+\theta)}{d-3+\theta},\frac{2(d-1+\theta)}{d-3+\theta})$:
\begin{align}
\|\chi_{Q_\mu^\alpha}\mathcal K_{(k)} \chi_{\widetilde{Q}_\mu^\alpha}\|_{L^2_tL^{q'}_x\rightarrow L^2_tL^{\tilde q}_x} \lesssim \langle2^k\rangle^{(1-\frac2q)\frac\theta2+(1-\frac2{\tilde q})\frac\theta2}\mu^{-\mathfrak b(q,\tilde q)},	
\end{align}
	where
	\begin{align*}
		 \mathfrak b (q,\tilde q) = \frac{d-3+\theta}{2}-\frac{d-1+\theta}{2}\left( \frac1q+\frac1{\tilde q} \right).
	\end{align*}
\end{lem}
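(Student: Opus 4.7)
I would follow the Keel--Tao atomic decomposition approach \cite{keeltao}, as implemented in the variable-coefficient setting by Tataru (compare Lemma 3.2 in \cite{tataru}). The idea is to recast the claim as a bilinear form estimate and reduce it to the interpolation between two symmetric endpoint bounds. Setting
\[
B(f,g) := \langle \chi_{Q_\mu^\alpha}\mathcal K_\lambda\chi_{\tilde Q_\mu^\alpha}f,\,g\rangle_{L^2_tL^2_x},
\]
by duality it suffices to show
\[
|B(f,g)| \lesssim \langle\lambda\rangle^{(1-\tfrac{2}{q})\tfrac{\theta}{2}+(1-\tfrac{2}{\tilde q})\tfrac{\theta}{2}}\mu^{-\mathfrak b(q,\tilde q)}\|f\|_{L^2_tL^{q'}_x}\|g\|_{L^2_tL^{\tilde q'}_x}.
\]

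Two symmetric endpoint bounds serve as inputs. From the $L^2_x$--boundedness $\|\mathcal K_\lambda(t,s)\|_{L^2\to L^2}\lesssim 1$ of Proposition \ref{prop-parametrix} and Cauchy--Schwarz in the time variables over cubes of length $\mu$ (noting $\mu = $ both the cube size and the distance to the diagonal), I obtain
\[
|B(f,g)|\lesssim \mu\,\|f\|_{L^2_tL^2_x}\|g\|_{L^2_tL^2_x}.
\]
From the dispersive bound \eqref{pointwise-decay}, which gives $\|\mathcal K_\lambda(t,s)\|_{L^1\to L^\infty}\lesssim \langle\lambda\rangle^\theta\mu^{-d_\theta}$ for $|t-s|\approx\mu$, a second application of Cauchy--Schwarz in time yields
\[
|B(f,g)|\lesssim \langle\lambda\rangle^\theta\mu^{1-d_\theta}\|f\|_{L^2_tL^1_x}\|g\|_{L^2_tL^1_x},
\]
where $d_\theta=(d-1+\theta)/2$.

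Symmetric bilinear complex interpolation (Stein's theorem) in the $L^p_x$ exponent produces the diagonal case $q=\tilde q$ of the lemma; a direct calculation shows that the interpolated $\mu$-exponent $1-d_\theta(1-2/q)$ coincides with $-\mathfrak b(q,q)$ and that the $\langle\lambda\rangle$-exponent matches. To obtain the off-diagonal range $q\neq\tilde q$ in a neighborhood of $(q_0,q_0)$, I would apply Stein's theorem to a two-parameter analytic family $B_{z,w}$ obtained by independent power-normalization of the two arguments, with boundary data supplied by the two symmetric endpoints above.

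The principal technical step is precisely this asymmetric bilinear interpolation: Stein's theorem in its direct form couples the two arguments with a single parameter, and decoupling them requires setting up a suitable two-parameter analytic family and verifying the standard admissibility and boundary-growth conditions. Once the bilinear form estimate is established for $(q,\tilde q)$ in a neighborhood of $(q_0,q_0)$, the stated operator norm bound follows by duality.
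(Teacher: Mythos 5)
Your two symmetric endpoint bounds are both correct (modulo a point about the kernel localization I'll flag below), and the diagonal interpolation indeed reproduces the case $q=\tilde q$: the interpolated $\mu$-exponent $1-d_\theta(1-2/q)$ equals $-\mathfrak b(q,q)$ and the $\langle\lambda\rangle$-exponent matches. But the off-diagonal decoupling step does not work as described, and this is the crux of the lemma.

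With only the two \emph{symmetric} inputs $(L^2_x,L^2_x)$ and $(L^1_x,L^1_x)$, every interpolation — real or complex, one parameter or two — produces estimates whose $(1/q,1/\tilde q)$-pair lies on the line segment joining $(0,0)$ and $(1/2,1/2)$, i.e.\ on the diagonal. Introducing independent power normalizations $B_{z,w}$ does not change this: a two-parameter Stein interpolation requires control of the analytic family on the \emph{whole distinguished boundary} of the product domain, which here consists of four corners
\[
(L^2_x,L^2_x),\quad (L^2_x,L^1_x),\quad (L^1_x,L^2_x),\quad (L^1_x,L^1_x),
\]
and you only have the two symmetric ones. The mixed corners are not consequences of the $L^2\to L^2$ and $L^1\to L^\infty$ bounds alone. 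This is exactly why Keel--Tao's Lemma 4.1, Tataru's Lemma 3.2, and the present paper all argue with \emph{three} corner families rather than two: besides $q=\tilde q=\infty$, the paper proves the cases $\tilde q=2,\ 2\le q<\tfrac{2(d-1+\theta)}{d-3+\theta}$ and its symmetric twin, by invoking the previously established frequency-localized Strichartz estimate $\|\mathcal K^\pm_\lambda f\|_{L^\infty_t L^2_x}\lesssim\langle\lambda\rangle^{(1-2/q)\theta/2}\|f\|_{L^{p'}_tL^{q'}_x}$ for a non-endpoint Strichartz pair $(p,q)$, together with two applications of H\"older in time on the cube (gaining $\mu^{1/2}$ on each side, which converts $L^\infty_t$ to $L^2_t$ and $L^2_t$ to $L^{p'}_t$). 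In the $(1/q,1/\tilde q)$-plane this produces the corner $(0,0)$ plus two rays emanating along the axes through $(1/2,1/q)$, $(1/q,1/2)$; their convex hull covers a genuine two-dimensional neighborhood of $(1/q_0,1/q_0)$, whereas your two points only span a segment. To fix your proof you must add these asymmetric corner bounds as inputs.

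A secondary, but real, imprecision: you apply the operator dispersive bound in the form ``$\|\mathcal K_\lambda(t,s)\|_{L^1\to L^\infty}\lesssim\langle\lambda\rangle^\theta\mu^{-d_\theta}$ for $|t-s|\approx\mu$''. On a Whitney cube of size $\mu$ at distance $\approx\mu$ from the diagonal in $\mathbb R^{1+d}\times\mathbb R^{1+d}$, the separation may be entirely spatial, i.e.\ $|t-s|\ll\mu$ while $|x-y|\approx\mu$; then $(1+|t-s|)^{-d_\theta}$ gives no decay. What saves the estimate is the additional finite-speed/light-cone concentration of the parametrix kernel encoded in the \emph{kernel} bound
\[
|K^\pm_\lambda(t,x,s,y)|\lesssim \lambda^{\theta}(1+|t-s|+|x-y|)^{-\frac{d-1+\theta}{2}},
\]
which the paper states in \eqref{kernel-bdd-diagonal} and uses for exactly this reason. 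You should quote the kernel bound rather than the fixed-time $L^1\to L^\infty$ operator bound when estimating $B$ on a Whitney cube.

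Finally, a small point of presentation: the paper (following Keel--Tao) ultimately uses \emph{bilinear real} interpolation with Lorentz refinements when summing over $\mu$; at the level of this lemma either real or complex interpolation serves, but the real-interpolation framework is needed downstream, so it is cleaner to stay within it here as well.
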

We prove the above estimates in the three cases:
	\begin{enumerate}
		\item $q=\tilde q=\infty$,
		\item $\tilde q=2$, $2\le q<\frac{2(d-1+\theta)}{d-3+\theta}$,
		\item $ q=2$, $2\le \tilde q<\frac{2(d-1+\theta)}{d-3+\theta}$.
	\end{enumerate}
	If $q=\tilde q=\infty$, we see that
	\begin{align*}
		\|\chi_{Q_\mu^\alpha}\mathcal K_{(k)} \chi_{\widetilde{Q}_\mu^\alpha} u\|_{L^2_tL^\infty_x} \lesssim \mu^{\frac{1}{2}}\|\chi_{Q_\mu^\alpha}\mathcal K_{(k)} \chi_{\widetilde{Q}_\mu^\alpha}u\|_{L^\infty_tL^\infty_x} \lesssim \mu^{\frac 12}\|\chi_{Q_\mu^\alpha} \mathcal K_{(k)} \chi_{\widetilde{Q}_\mu^\alpha}\|_{L^\infty_tL^\infty_x}\|u\|_{L^1_tL^1_x},
	\end{align*}
	and we simply use the kernel bound \eqref{kernel-bdd-diagonal} and the H\"older inequality for the $u$ in $t$, which yields the desired bound. \\
When $\tilde q=2$ and $2\le q<\frac{2(d-1+\theta)}{d-3+\theta}$, we choose $2<p$ so that $(p,q)$ is a Strichartz admissible pair. See \eqref{strichartz-admissible-pair}. Then we have
\begin{align*}
	\|\chi_{Q_\mu^\alpha}\mathcal K_{(k)} \chi_{\widetilde{Q}_\mu^\alpha} u\|_{L^2_tL^2_x} \lesssim \mu^{\frac12}\|\chi_{Q_\mu^\alpha}\mathcal K_{(k)} \chi_{\widetilde{Q}_\mu^\alpha} u\|_{L^\infty_tL^2_x} \lesssim \mu^{\frac12}{\langle 2^k\rangle}^{(1-\frac2q)\frac\theta2}\|\chi_{\tilde{Q}^\alpha_\mu}u\|_{L^{p'}_tL^{q'}_x} \lesssim \mu^{1-\frac1p}\langle2^k\rangle^{(1-\frac2q)\frac\theta2}\|u\|_{L^2_tL^{q'}_x},
\end{align*}
which is the desired bound, since $-1+\frac1p = \mathfrak b(2,q)$. The third case follows by simply interchanging the role of $q$ and $\tilde q$. This completes the proof of Lemma \ref{lem-endpoint-int}. \\
Now we turn our attention to the proof of the endpoint Strichartz estimates \eqref{est-stri-endpoint}. 
%The remaining task is very similar as \cite{keeltao}. 
We shall make the use of the bilinear real interpolation argument. To do this, we first apply the standard $TT^*$-argument to set the bilinear operator $\mathfrak T_{(k)}^\mu$ given by
\begin{align*}
	\mathfrak T_{(k)}^\mu(u,v) = \int_{s<t} \langle \mathcal K^\mu_{(k)} u(t), \mathcal K^\mu_{(k)} v(s)\rangle_{L^2_x}\,dsdt,
\end{align*}
and we write the equivalent form of the estimate of Lemma \ref{lem-endpoint-int} as
\begin{align}
|\mathfrak T_{(k)}^\mu(u,v)| \lesssim \langle2^k\rangle^{(1-\frac2q)\frac\theta2+(1-\frac2{\tilde q})\frac\theta2}\mu^{-\mathfrak b(q,\tilde q)}\|u\|_{L^2_tL^{q'}_x}\|v\|_{L^2_tL^{\tilde q'}_x} 	.
\end{align}
Even though we cannot obtain any bound for the sum by putting $q=\tilde q =\frac{2(d-1+\theta)}{d-3+\theta}$:
\begin{align*}
	\sum_{\mu\in 2^{\mathbb Z}}|\mathfrak T^\mu_{(k)}(u,v)|,
\end{align*}
which would lead us to the desired endpoint estimates, we can improve the exponent $-\mathfrak b(q,\tilde q)$ of $\mu$ since we have on one hand a two-parameter family of estimates together with various exponential decay factors which holds for any points in an open neighborhood of $(\frac{2(d-1+\theta)}{d-3+\theta},\frac{2(d-1+\theta)}{d-3+\theta})$. The desired bound for the sum above readily follows in the identical manner as the bilinear interpolation argument given by \cite{keeltao}. We omit the details. This completes the proof of the estimates \eqref{est-stri-endpoint}.

\section*{Acknowledgment}
Funded by the Deutsche Forschungsgemeinschaft (DFG, German Research Foundation) -- IRTG 2235 -- Project-ID 282638148.
The first author thanks Daniel Tataru for a helpful discussion about this problem.
The authors also thank the anonymous referees for their very helpful reports.
\bibliographystyle{amsplain}
\bibliography{biblio}

\providecommand{\bysame}{\leavevmode\hbox to3em{\hrulefill}\thinspace}
\providecommand{\MR}{\relax\ifhmode\unskip\space\fi MR }
% \MRhref is called by the amsart/book/proc definition of \MR.
\providecommand{\MRhref}[2]{%
  \href{http://www.ams.org/mathscinet-getitem?mr=#1}{#2}
}
\providecommand{\href}[2]{#2}
\begin{thebibliography}{10}

\bibitem{bar}
Christian B\"ar, \emph{The {D}irac operator on hyperbolic manifolds of finite
  volume}, J. Differential Geom. \textbf{54} (2000), no.~3, 439--488.
  \MR{1823312}

\bibitem{behe}
Ioan Bejenaru and Sebastian Herr, \emph{The cubic {D}irac equation: small
  initial data in {$H^1(\mathbb{R}^3)$}}, Comm. Math. Phys. \textbf{335}
  (2015), no.~1, 43--82. \MR{3314499}

\bibitem{behe1}
\bysame, \emph{The cubic {D}irac equation: small initial data in {$H^{\frac
  12}(\mathbb R^2)$}}, Comm. Math. Phys. \textbf{343} (2016), no.~2, 515--562.
  \MR{3477346}

\bibitem{artzcaccia}
Jonathan Ben-Artzi, Federico Cacciafesta, Anne-Sophie de~Suzzoni, and Junyong
  Zhang, \emph{Global {S}trichartz estimates for the {D}irac equation on
  symmetric spaces}, Forum Math. Sigma \textbf{10} (2022), Paper No. e25, 38.
  \MR{4422440}

\bibitem{blairsmithsogge}
Matthew~D. Blair, Hart~F. Smith, and Christopher~D. Sogge, \emph{Strichartz
  estimates for the wave equation on manifolds with boundary}, Ann. Inst. H.
  Poincar\'e{} C Anal. Non Lin\'eaire \textbf{26} (2009), no.~5, 1817--1829.
  \MR{2566711}

\bibitem{boucan}
Nikolaos Bournaveas and Timothy Candy, \emph{Global well-posedness for the
  massless cubic {D}irac equation}, Int. Math. Res. Not. IMRN (2016), no.~22,
  6735--6828. \MR{3632067}

\bibitem{burq}
N.~Burq, \emph{Global {S}trichartz estimates for nontrapping geometries: about
  an article by {H}. {F}.\ {S}mith and {C}. {D}.\ {S}ogge: ``{G}lobal
  {S}trichartz estimates for nontrapping perturbations of the {L}aplacian''},
  Comm. Partial Differential Equations \textbf{28} (2003), no.~9-10,
  1675--1683. \MR{2001179}

\bibitem{caccia2}
Federico Cacciafesta, Elena Danesi, and Long Meng, \emph{Strichartz estimates
  for the half wave/{K}lein-{G}ordon and {D}irac equations on compact manifolds
  without boundary}, Math. Ann. \textbf{389} (2024), no.~3, 3009--3042.
  \MR{4753080}

\bibitem{cacciasu1}
Federico Cacciafesta and Anne-Sophie de~Suzzoni, \emph{Weak dispersion for the
  {D}irac equation on asymptotic flat and warped product spaces}, Discrete
  Contin. Dyn. Syst. \textbf{39} (2019), no.~8, 4359--4398. \MR{3986292}

\bibitem{cacciasu}
\bysame, \emph{Local in time {S}trichartz estimates for the {D}irac equation on
  spherically symmetric spaces}, Int. Math. Res. Not. IMRN (2022), no.~4,
  2729--2771. \MR{4381931}

\bibitem{caccia1}
Federico Cacciafesta, Anne-Sophie de~Suzzoni, and Long Meng, \emph{Strichartz
  estimates for the dirac equation on asymptotically flat manifolds},
  \texttt{arXiv:2203.16096 [math.AP]}, 2022.

\bibitem{cholee}
Yonggeun Cho and Sanghyuk Lee, \emph{Strichartz estimates in spherical
  coordinates}, Indiana Univ. Math. J. \textbf{62} (2013), no.~3, 991--1020.
  \MR{3164853}

\bibitem{cordes}
H.O Cordes, \emph{On compactness of commutators of multiplications and
  convolutions, and boundedness of pseudodifferential operators}, Journal of
  Functional Analysis \textbf{18} (1975), no.~2, 115--131.

\bibitem{dancona}
Piero D'Ancona, Zhiqing Yin, and Junyong Zhang, \emph{Dispersive and strichartz
  estimates for dirac equation in a cosmic string spacetime},
  \texttt{arxiv.org/abs/2501.10850 [math.AP]}, 2025.

\bibitem{delort}
Jean-Marc Delort, \emph{Normal forms and long time existence for semi-linear
  {K}lein-{G}ordon equations}, Boll. Unione Mat. Ital. Sez. B Artic. Ric. Mat.
  (8) \textbf{10} (2007), no.~1, 1--23. \MR{2310955}

\bibitem{delortfang}
Jean-Marc Delort and Daoyuan Fang, \emph{Almost global existence for solutions
  of semilinear {K}lein-{G}ordon equations with small weakly decaying {C}auchy
  data}, Comm. Partial Differential Equations \textbf{25} (2000), no.~11-12,
  2119--2169. \MR{1789923}

\bibitem{D}
Paul A.~M. Dirac, \emph{The quantum theory of the electron. {I}.}, Proc. R.
  Soc. Lond., Ser. A \textbf{117} (1928), 610--624 (English).

\bibitem{escobedovega}
M.~Escobedo and L.~Vega, \emph{A semilinear {D}irac equation in {$H^s({\bf
  R}^3)$} for {$s>1$}}, SIAM J. Math. Anal. \textbf{28} (1997), no.~2,
  338--362. \MR{1434039}

\bibitem{gajaotataru}
Cristian Gavrus, Casey Jao, and Daniel Tataru, \emph{Wave maps on
  {$(1+2)$}-dimensional curved spacetimes}, Anal. PDE \textbf{14} (2021),
  no.~4, 985--1084. \MR{4283689}

\bibitem{ginibrevelo}
J.~Ginibre and G.~Velo, \emph{Time decay of finite energy solutions of the
  nonlinear {K}lein-{G}ordon and {S}chr\"odinger equations}, Ann. Inst. H.
  Poincar\'e{} Phys. Th\'eor. \textbf{43} (1985), no.~4, 399--442. \MR{824083}

\bibitem{ginibrevelo1}
\bysame, \emph{Smoothing properties and retarded estimates for some dispersive
  evolution equations}, Comm. Math. Phys. \textbf{144} (1992), no.~1, 163--188.
  \MR{1151250}

\bibitem{hazhang}
Andrew Hassell and Junyong Zhang, \emph{Global-in-time {S}trichartz estimates
  on nontrapping, asymptotically conic manifolds}, Anal. PDE \textbf{9} (2016),
  no.~1, 151--192. \MR{3461304}

\bibitem{hoermander}
Lars H\"ormander, \emph{The analysis of linear partial differential operators.
  {III}}, Classics in Mathematics, Springer, Berlin, 2007, Pseudo-differential
  operators, Reprint of the 1994 edition. \MR{2304165}

\bibitem{ivanovici2}
Oana Ivanovici, Richard Lascar, Gilles Lebeau, and Fabrice Planchon,
  \emph{Dispersion for the wave equation inside strictly convex domains {II}:
  {T}he general case}, Ann. PDE \textbf{9} (2023), no.~2, Paper No. 14, 117.
  \MR{4617105}

\bibitem{ivanovici1}
Oana Ivanovici, Gilles Lebeau, and Fabrice Planchon, \emph{Dispersion for the
  wave equation inside strictly convex domains {I}: the {F}riedlander model
  case}, Ann. of Math. (2) \textbf{180} (2014), no.~1, 323--380. \MR{3194817}

\bibitem{katooz}
Jun Kato and Tohru Ozawa, \emph{Endpoint {S}trichartz estimates for the
  {K}lein-{G}ordon equation in two space dimensions and some applications}, J.
  Math. Pures Appl. (9) \textbf{95} (2011), no.~1, 48--71. \MR{2746436}

\bibitem{keeltao}
Markus Keel and Terence Tao, \emph{Endpoint {S}trichartz estimates}, Amer. J.
  Math. \textbf{120} (1998), no.~5, 955--980. \MR{1646048}

\bibitem{klai1}
S.~Klainerman, \emph{Global existence of small amplitude solutions to nonlinear
  {K}lein-{G}ordon equations in four space-time dimensions}, Seminar on new
  results in nonlinear partial differential equations ({B}onn, 1984), Aspects
  Math., vol. E10, Friedr. Vieweg, Braunschweig, 1987, pp.~75--89. \MR{896278}

\bibitem{klai}
\bysame, \emph{Remark on the asymptotic behavior of the {K}lein-{G}ordon
  equation in {${\bf R}^{n+1}$}}, Comm. Pure Appl. Math. \textbf{46} (1993),
  no.~2, 137--144. \MR{1199196}

\bibitem{machihara}
Shuji Machihara, Makoto Nakamura, Kenji Nakanishi, and Tohru Ozawa,
  \emph{Endpoint {S}trichartz estimates and global solutions for the nonlinear
  {D}irac equation}, J. Funct. Anal. \textbf{219} (2005), no.~1, 1--20.
  \MR{2108356}

\bibitem{machihara1}
Shuji Machihara, Kenji Nakanishi, and Tohru Ozawa, \emph{Small global solutions
  and the nonrelativistic limit for the nonlinear {D}irac equation}, Rev. Mat.
  Iberoamericana \textbf{19} (2003), no.~1, 179--194. \MR{1993419}

\bibitem{marshall}
Bernard Marshall, Walter Strauss, and Stephen Wainger,
  \emph{{$L\sp{p}-L\sp{q}$}\ estimates for the {K}lein-{G}ordon equation}, J.
  Math. Pures Appl. (9) \textbf{59} (1980), no.~4, 417--440. \MR{607048}

\bibitem{metataru}
Jason Metcalfe and Daniel Tataru, \emph{Global parametrices and dispersive
  estimates for variable coefficient wave equations}, Math. Ann. \textbf{353}
  (2012), no.~4, 1183--1237. \MR{2944027}

\bibitem{metcalfe}
Jason~L. Metcalfe, \emph{Global {S}trichartz estimates for solutions to the
  wave equation exterior to a convex obstacle}, Trans. Amer. Math. Soc.
  \textbf{356} (2004), no.~12, 4839--4855. \MR{2084401}

\bibitem{mg}
S.~J. Montgomery-Smith, \emph{Time decay for the bounded mean oscillation of
  solutions of the {S}chr\"odinger and wave equations}, Duke Math. J.
  \textbf{91} (1998), no.~2, 393--408. \MR{1600602}

\bibitem{nakaschlag}
K.~Nakanishi and W.~Schlag, \emph{Invariant manifolds around soliton manifolds
  for the nonlinear {K}lein-{G}ordon equation}, SIAM J. Math. Anal. \textbf{44}
  (2012), no.~2, 1175--1210. \MR{2914265}

\bibitem{parker}
Leonard~E. Parker and David~J. Toms, \emph{Quantum field theory in curved
  spacetime}, Cambridge Monographs on Mathematical Physics, Cambridge
  University Press, Cambridge, 2009, Quantized fields and gravity. \MR{2573081}

\bibitem{shatah}
Jalal Shatah, \emph{Normal forms and quadratic nonlinear {K}lein-{G}ordon
  equations}, Comm. Pure Appl. Math. \textbf{38} (1985), no.~5, 685--696.
  \MR{803256}

\bibitem{smith}
Hart~F. Smith, \emph{A parametrix construction for wave equations with
  {$C^{1,1}$} coefficients}, Ann. Inst. Fourier (Grenoble) \textbf{48} (1998),
  no.~3, 797--835. \MR{1644105}

\bibitem{smithsogge}
Hart~F. Smith and Christopher~D. Sogge, \emph{Global {S}trichartz estimates for
  nontrapping perturbations of the {L}aplacian}, Comm. Partial Differential
  Equations \textbf{25} (2000), no.~11-12, 2171--2183. \MR{1789924}

\bibitem{smithtataru}
Hart~F. Smith and Daniel Tataru, \emph{Sharp counterexamples for {S}trichartz
  estimates for low regularity metrics}, Math. Res. Lett. \textbf{9} (2002),
  no.~2-3, 199--204. \MR{1909638}

\bibitem{soler}
Mario Soler, \emph{Classial, stable, nonlinear spinor fields with positive rest
  energy}, Phys. Rev. D \textbf{{1}} ({1970}), no.~{10}, 2766--2769.

\bibitem{sterbenz}
Jacob Sterbenz, \emph{Angular regularity and {S}trichartz estimates for the
  wave equation}, Int. Math. Res. Not. (2005), no.~4, 187--231, With an
  appendix by Igor Rodnianski. \MR{2128434}

\bibitem{strichartz}
Robert~S. Strichartz, \emph{Restrictions of {F}ourier transforms to quadratic
  surfaces and decay of solutions of wave equations}, Duke Math. J. \textbf{44}
  (1977), no.~3, 705--714. \MR{512086}

\bibitem{tataru}
Daniel Tataru, \emph{Strichartz estimates for operators with nonsmooth
  coefficients and the nonlinear wave equation}, Amer. J. Math. \textbf{122}
  (2000), no.~2, 349--376. \MR{1749052}

\bibitem{tataru1}
\bysame, \emph{Strichartz estimates for second order hyperbolic operators with
  nonsmooth coefficients. {II}}, Amer. J. Math. \textbf{123} (2001), no.~3,
  385--423. \MR{1833146}

\bibitem{tataru2}
\bysame, \emph{Strichartz estimates for second order hyperbolic operators with
  nonsmooth coefficients. {III}}, J. Amer. Math. Soc. \textbf{15} (2002),
  no.~2, 419--442. \MR{1887639}

\bibitem{tataru3}
\bysame, \emph{Phase space transforms and microlocal analysis}, Phase space
  analysis of partial differential equations. {V}ol. {II}, Pubbl. Cent. Ric.
  Mat. Ennio Giorgi, Scuola Norm. Sup., Pisa, 2004, pp.~505--524. \MR{2208883}

\bibitem{tataru4}
\bysame, \emph{Parametrices and dispersive estimates for {S}chr\"odinger
  operators with variable coefficients}, Amer. J. Math. \textbf{130} (2008),
  no.~3, 571--634. \MR{2418923}

\bibitem{thirring}
Walter~E. Thirring, \emph{A soluble relativistic field theory}, Ann. Physics
  \textbf{3} (1958), 91--112. \MR{91788}

\bibitem{xue}
M.~Xue, \emph{{C}oncerning the {K}lein-{G}ordon equation on asymptotically
  {E}uclidean manifolds}, Ph.D. thesis, Johns Hopkins University.
  \texttt{http://jhir.library.jhu.edu/handle/1774.2/39549}, 2015.

\bibitem{zhang}
Junyong Zhang, \emph{Strichartz estimates and nonlinear wave equation on
  nontrapping asymptotically conic manifolds}, Adv. Math. \textbf{271} (2015),
  91--111. \MR{3291858}

\bibitem{zhangzheng}
Junyong Zhang and Jiqiang Zheng, \emph{Strichartz estimate and nonlinear
  {K}lein-{G}ordon equation on nontrapping scattering space}, J. Geom. Anal.
  \textbf{29} (2019), no.~3, 2957--2984. \MR{3969450}

\end{thebibliography}

\end{document}